\numberwithin{equation}{section}
\theoremstyle{plain} 
\newtheorem{theo}{Theorem}[section]
\theoremstyle{definition} 
\newcommand\CorrespondingAuthor[1]{%
  \begingroup%
  \def\@makefnmark{}%
  \footnotetext{Corresponding author: #1}%
  \endgroup%
}
\renewenvironment{abstract}{%
  \small%
  \begin{center}%
    \bfseries \abstractname\vspace{-.5em}\vspace{\z@}%
  \end{center}%
  \quote%
}{\endquote}
\DeclareRobustCommand*\subref{\@ifstar\sf@@subref\sf@subref}
\DeclareMathOperator\E{E}
\newcommand{\R}{\mathbb{R}}
\begin{document}

\title{
Non-Gaussian  Geostatistical Modeling using (skew) t Processes
}

\author[1]{Moreno Bevilacqua}
\affil[1]{Departamento de Estad\'istica, Universidad of Valpara\'iso, Chile}
\affil[1]{Millennium Nucleus Center
for the Discovery of Structures
in Complex Data , Chile, \texttt{moreno.bevilacqua@uv.cl}}

\author[2]{Christian Caama\~no}
\affil[2]{Departamento de Estad\'istica, Universidad del B\'io-B\'io, Concepci\'on, Chile,  \texttt{chcaaman@ubiobio.cl}}

\author[3]{Reinaldo B. Arellano Valle}
\affil[3]{Departamento de Estad\'istica, Pontificia Universidad Cat\'olica de Chile, Santiago, Chile,  \texttt{reivalle@mat.uc.cl}}

\author[4]{V\'ictor Morales-O\~nate}
\affil[4]{Departamento de Desarrollo, Ambiente y Territorio, Facultad Latinoamericana de Ciencias Sociales, Quito, Ecuador     \texttt{victor.morales@uv.cl}}

\date{\today}

\maketitle

\begin{abstract}
We propose a new  model for regression and dependence analysis when  addressing  spatial  data 
 with possibly   heavy tails and an asymmetric marginal distribution.
We first propose a  stationary process  with  $t$  marginals  obtained through scale mixing of a Gaussian
process with an inverse square root process with Gamma marginals.
We then  generalize this construction
by considering   a skew-Gaussian process, thus
obtaining a process with skew-t marginal distributions. For the  proposed (skew) $t$ process
we study the second-order and geometrical properties  and in the $t$ case, we provide  analytic expressions for the bivariate distribution.
In an extensive simulation   study, we investigate the use of the weighted pairwise likelihood as a method of estimation
for the $t$ process.
Moreover we compare the performance of the optimal linear predictor of the $t$ process versus the optimal Gaussian predictor.
Finally, the effectiveness of our methodology  is illustrated by analyzing a georeferenced dataset on maximum temperatures in Australia.\\

\noindent\textit{Keywords:}
  Heavy-tailed processes; Hypergeometric functions; Multivariate skew-normal distribution; Scale mixing; Pairwise likelihood.
\end{abstract}

\section{Introduction} 
\label{sec:introduction}
The geostatistical approach models data coming from a limited number of monitoring stations 
as a partial realization from a spatial 
stochastic process (or random field) defined on the continuum space. 
Gaussian stochastic processes  are among the most popular tools for analyzing spatial data 
because  a mean structure and a valid covariance
function completely characterized the associated finite dimensional distribution.
Additionally, optimal prediction  at an unobserved site 
 depends on the knowledge of the covariance function of the process.
Unfortunately, in many geostatistical applications, including climatology,
oceanography, the environment and the study of natural resources, the
Gaussian framework is unrealistic  because the observed data have
specific features such as negative or positive asymmetry and/or
heavy tails.

The focus of this work is on non-Gaussian models for stochastic processes  that vary continuously in the euclidean  space,
even if the proposed methodology can be easily extended to the space-time framework or to the  spherical space. 
In particular, we aim to accommodate heavier tails than the ones induced by Gaussian processes and wish to allow possible asymmetry.
In recent years, different approaches have been proposed in
order to analyze these kind of data. Transformation of Gaussian
(trans-Gaussian) processes  is a general  method  to model
non-Gaussian spatial data  obtained by applying some nonlinear
transformations to the original data
\citep{Oliveira_et_al:1997,Allcroft:Glasbey:2003,DeOliveira:2006}.
Then statistical analyses can be carried out on the transformed
data using any techniques available for Gaussian processes.
However, it can be difficult to find an adequate nonlinear
transformation and some appealing properties of the latent
Gaussian process  may not be inherited by the transformed process.
A flexible trans-Gaussian process  based on the Tukey $g-h$
distribution has been proposed in \cite{Xua:Genton:2017}.

\cite{Wallin:Bolin:2015} proposed non-Gaussian processes derived
from stochastic partial differential equations to model
non-Gaussian spatial data. However this approach is
restricted to the Mat\'ern covariance model with integer smoothness parameter  and its statistical
properties are much less understood than those of  the Gaussian process.

The copula framework \citep{Joe:2014} has been adapted in the spatial context in order  to account for possible deviations from the Gaussian distribution.
Even
though which copula model to use for a given analysis is not generally known a priori, the
copula based on the multivariate Gaussian distribution has gained a general consensus  \citep{Kazianka:Pilz:2010,Masarotto:Varin:2012,BG:2014} since the definition of the
multivariate dependence relies again on the specification of the correlation function.
However, Gaussian copula could be too restrictive in some cases since it expresses a symmetrical  and elliptical dependence.


Convolution  of  Gaussian and non-Gaussian processes  is an  appealing
strategy for modeling spatial data with  skewness.
For instance,
\cite{Zhang:El-Shaarawi:2010} proposed  a Gaussian-Half Gaussian
convolution in order to construct a process with  marginal
distributions
of the skew-Gaussian type
\citep{Azzalin:Capitanio:2014}. 
\cite{zareifard2018}
developed bayesian inference for the estimation of
a process with asymmetric marginal distributions obtained  through
convolution of  Gaussian and  Log-Gaussian processes.
\cite{BM2017} proposed a skew-Gaussian process using the skew-model proposed in \cite{sahu2003}.
The resulting process is not mean-square continuous and as a consequence it is not a suitable model
for data exhibiting
smooth behavior of the realization.

On the other hand, mixing   of  Gaussian and non-Gaussian processes
is a useful strategy for modeling spatial data with heavy tails.
For instance,
 \cite{Palacios:Steel:2006} and  \cite{ZAREIFARD201316}
proposed a (skew) Gaussian-Log-Gaussian scale mixing approach in order
 to accommodate the presence of possible outliers for spatial data.

The $t$ distribution is a parametric model  that is able to accommodate  flexible tail
behavior, thus  providing robust estimates against extreme data
 and it  has been studied extensively  in recent   years \citep{AB95,TT:1989,FF2008,qqpo,qqpo1}.
Stochastic processes with marginal $t$ distributions have been
introduced in \cite{RHO:2006},
 \cite{Ma:2009},   \cite{Ma:2010} and  \cite{DEB2015},
but as outlined in  \cite{genton:Zhang:2012},  these models are not be identifiable
when only a single realization  is available (which is typically the case for spatial data).

In this paper, 
 we propose a
process with marginal $t$ distributions obtained though scale mixing  of a standard Gaussian process with  an inverse square
root  process with Gamma marginals. The latter is obtained through a rescaled
sum of independent  copies of  a  standard squared  Gaussian process.
Although  this can be viewed as a natural  way  to define a $t$   process, the associated second-order,
geometrical properties and bivariate distribution are somewhat  unknown to the best of our knowledge.
Some   results can be found in \cite{heyde2005} and \cite{seneta2006}. We study  the second-order and geometrical
properties  of the $t$ process and  we provide  analytic expressions  for the correlation
and the bivariate distribution. It turns out that both depend on  special functions,  particularly
the Gauss hypergeometric and Appell function of the fourth type \citep{Gradshteyn:Ryzhik:2007}. In addition,
the bivariate distribution is not of elliptical type.

We then focus on processes with asymmetric  marginal distributions and heavy tails. We first review the skew Gaussian process proposed in \cite{Zhang:El-Shaarawi:2010}.
For this process we provide an explicit expression of the finite dimensional distribution generalizing previous results in \cite{ALEGRIA2017}.
We then propose a process with marginal distribution of the skew-$t$ type  \citep{Azzalin:Capitanio:2014}
obtained
through scale mixing of a skew-Gaussian with
an inverse square
root process with Gamma marginals.

Our proposals for the $t$ and skew-$t$ processes have two main features. First, they allow removal of any problem of  identifiability   \citep{genton:Zhang:2012},
 and as a consequence, all the parameters can be estimated using one realization of the process.
Second,   the $t$ and skew-$t$  processes inherit some of  the geometrical properties of the   underlying Gaussian process.
This implies that
the mean-square continuity and differentiability 
 of the $t$ and skew-$t$ processes  can be modeled using  suitable parametric
correlation models as  the Mat{\'e}rn model   \citep{Matern:1960} or  the Generalized Wendland model \citep{Gneiting:2002b,Bevilacqua_et_al:2018}.


For the $t$ process estimation we propose the method of weighted pairwise likelihood \citep{Lindsay:1988,Varin:Reid:Firth:2011,Bevilacqua:Gaetan:2015}
exploiting
 the bivariate distribution given in Theorem \ref{theo3}. 
 In an extensive simulation study we investigate the performance of the  weighted pairwise likelihood ($wpl$) method under different scenarios
  including
 when the degrees of freedom are supposed to be unknown.
 We also study the performance of the   $wpl$  estimation by assuming a Gaussian process  in the estimation step  with correlation function
  equal to the correlation function of the  $t$ process.
   It turns out that  the Gaussian  misspecified $wpl$ (see   \cite{cppp} with the references
therein) leads to a less efficient  estimator, as expected. However, 
   the method has some computational benefits.

Additionally, we compare the performance of the optimal linear predictor of the $t$ process
with the optimal predictor of the Gaussian process.
Finally we apply the proposed methodology  by analyzing a real data set of maximum temperature in Australia
where, in this case, we consider  a $t$ process   defined on a portion of the sphere (used as an approximation of the planet Earth)
and use a correlation model depending  on the great-circle distance \citep{gneiting2013}.

The methodology   considered in this paper is  implemented  in the R package
\texttt{GeoModels} \citep{Bevilacqua:2018aa}.
The remainder of the paper is organized as follows.  In Section \ref{sec:2} we introduce the $t$ process, study the second-order and geometrical properties
and   provide an analytic expression for the bivariate distribution. In Section \ref{sec:3}, we first study the  finite dimensional distribution of the skew Gaussian process,
and then we study the second-order properties of the skew-$t$ process.
In Section \ref{sec:4}, we present a simulation study in order to investigate the performance of the (misspecified) $wpl$ method
when estimating  the $t$ process and the performance of the associated optimal linear predictor versus the optimal Gaussian predictor.
In Section \ref{sec:5}, we analyze a real data set of maximum temperature in Australia.
Finally, in Section \ref{sec:6}, we give some conclusions.
All the  proofs has been
deferred to the Appendix.

\section{A stochastic process with $t$ marginal distribution} 
\label{sec:2}

For the rest of the paper, given  a  process $Q=\{Q(\bm{s}), \bm{s} \in A \}$
with  $\E(Q(\bm{s}))=\mu(\bm{s})$ and
$Var(Q(\bm{s}))=\sigma^2$,
we denote by
$\rho_Q(\bm{h})=Corr(Q(\bm{s}_i),Q(\bm{s}_j))$ its correlation function, where $\bm{h}=\bm{s}_i-\bm{s}_j  $
is the  lag separation vector. For any   set of  distinct points $(\bm{s}_1,\ldots,\bm{s}_n)^T$, $n\in \mathcal{N}$, we denote by $\bm{Q}_{ij}=(Q(\bm{s}_i),Q(\bm{s}_j))^T$, $i \neq j$, the bivariate random vector and by $\bm{Q}=(Q(\bm{s}_1),\ldots,
Q(\bm{s}_n))^T$ the multivariate random vector. Moreover, we denote with $f_{Q(\bm{s})}$ and  $F_{Q(\bm{s})}$
the marginal probability density function (pdf) and cumulative distribution function (cdf) of  $Q(\bm{s})$ respectively, with  $f_{\bm{Q}_{ij}}$ the pdf of $\bm{Q}_{ij}$ and with
$f_{\bm{Q}}$ the pdf of $\bm{Q}$.
Finally, we denote with $Q^*$ the standardized weakly stationary  process, $i.e.$,
$Q^*(\bm{s}):=(Q(\bm{s})-\mu(\bm{s}))/\sigma$.




%

As outlined in  \cite{Palacios:Steel:2006}, given a positive process
$M=\{M(\bm{s}), \bm{s} \in A\}$ and  an independent  standard Gaussian  process $G^*=\{G^*(\bm{s}), \bm{s} \in A\}$,
a general class
of non-Gaussian processes with marginal  heavy tails 
 can be obtained as scale mixture of
$G^*$, $i.e.$
$\mu(\bm{s})+\sigma M(\bm{s})^{-\frac{1}{2}}G^*(\bm{s})$,
where 
$\mu(\bm{s})$ is the location dependent  mean and  $\sigma>0$  is a scale parameter. 
 A typical
parametric specification for the  mean is given by
$\mu(\bm{s})=X(\bm{s})^T\bm{\beta}$ where  $X(\bm{s}) \in  \R^k$ is a
vector of  covariates and  $\bm{\beta}  \in  \R^k$ but other types of
parametric or nonparametric functions can be considered.

 Henceforth, we call $G^*$ the `parent' process and
with some abuse of notation we set $\rho(\bm{h}):=\rho_{G^{*}}(\bm{h})$ and $G:=G^{*}$.
 Our proposal  considers a mixing process $W_{\nu}=\{W_{\nu}(\bm{s}), \bm{s} \in A\}$  with marginal distribution
 $ \Gamma (\nu/2,\nu/2)$ defined as $W_{\nu}(\bm{s}):=\sum_{i=1}^\nu
G_i(\bm{s})^2/\nu$
where $G_i$, $i=1,\ldots \nu $  are  independent copies of $G$
with  $\E(W_{\nu}(\bm{s}))=1$,
$Var(W_{\nu}(\bm{s}))=2/\nu$ and $\rho_{W_{\nu}}(\bm{h}) =
\rho^2(\bm{h})$
\citep{Bevilacqua:2018ab}.
If we   consider a process $Y^*_{\nu}=\{Y^*_{\nu}(\bm{s}), \bm{s} \in A\}$ defined as
 \begin{equation}\label{rut}
 Y^*_{\nu}(\bm{s}):=W_{\nu}(\bm{s})^{-\frac{1}{2}}G(\bm{s}),
 \end{equation}
then, by construction,    $Y^*_{\nu}$ has  the marginal $t$ distribution with $\nu$ degrees of freedom  with pdf given by:
\begin{equation}\label{ut}
f_{Y^{*}_{\nu}(\bm{s})}(y;\nu)=\frac{\Gamma\left(\frac{\nu+1}{2}\right)}{\sqrt{\pi\nu}\Gamma\left(\frac{\nu}{2}\right)}\left(1+\frac{y^2}{\nu}\right)^{-\frac{(\nu+1)}{2}}.
\end{equation}

Then, we  define the location-scale transformation process $Y_{\nu}=\{Y_{\nu}(\bm{s}), \bm{s} \in A\}$ as:
\begin{equation}\label{kkkkk2}
Y_{\nu}(\bm{s}):=\mu(\bm{s})+\sigma Y^*_{\nu}(\bm{s})
\end{equation}
with   $\E(Y_{\nu}(\bm{s}))=\mu(\bm{s})$ and
$Var(Y_{\nu}(\bm{s}))=\sigma^2\nu/(\nu-2)$, $\nu>2$.


 {\bf Remark 1}:
A  possible drawback for  the Gamma process  $W_{\nu}$   is that it is  a limited model due to the restrictions
to the  half-integers for  the shape parameter.
Actually, in some special  cases, it can assume any positive value greater than zero.
 This feature is intimately related to the infinite divisibility of the squared Gaussian process $G^2=\{G^2(\bm{s}), \bm{s} \in A\}$
as shown in \cite{Kri:rao:1961}. Characterization of the infinite  divisibility of $G^2$ has been studied in \cite{Vere-Jones:1997}, \cite{Bapat:1989}, \cite{Griffiths:1970} and \cite{Eisenbaum:Kaspi:2006}. In particular  \cite{Bapat:1989}
 provides a characterization based on $\Omega$, the correlation matrix
associated with $\rho(\bm{h})$. Specifically,  $\nu >0$
if and only if there exists a matrix $S_n$ such that $S_n\Omega^{-1}S_n$ is   an $M$-matrix  \citep{plee:1977}, where $S_n$ is a signature matrix, $i.e$., a diagonal matrix of size $n$ with entries either $1$ or $-1$.
This  condition is satisfied, for  instance, by a stationary Gaussian random process $G$ defined on $A=\mathbb{R}$ with  an exponential correlation function.
The  $t$ process $Y^*_{\nu}$ inherits this feature with the additional restriction $\nu>2$. This implies that   $Y^*_{\nu}$  is well defined for $\nu=3,4,\ldots$  and  for $\nu>2$
under non-infinite divisibility of $G^2$.

{\bf Remark 2}:
The finite dimensional distribution of $Y^*_{\nu}$  is unknown  to the best of our knowledge, but in principle, it can be derived  by mixing  the multivariate  density  associated with
 $W^{-\frac{1}{2}}_{\nu}$
with the multivariate standard Gaussian density.
The multivariate Gamma density $f_{\bm{W}_{\nu}}$  was first discussed by
\cite{krishnamoorthy1951}
 and its properties have been studied by different authors \citep{Royen:2004,marcus2014}.
 In the bivariate case, \cite{VJ:1967}  showed that  the bivariate Gamma distribution is  infinite divisible, $i.e.$  $\nu>0$ in (\ref{pairchi2}),
 irrespective  of the correlation function.
 Note that this  is consistent with the characterization given in  \cite{Bapat:1989} since,  given  an arbitrary  bivariate  correlation matrix $\Omega$,
  there exists a matrix $S_2$ such that
$S_2\Omega^{-1} S_2$  is a $M$-matrix.
  In Theorem \ref{theo3} we provide the bivariate distribution of $Y^*_{\nu}$ 

Note that,  both $W_{\nu}$ and $G$ in (\ref{kkkkk2}) are obtained through independent  copies of the   `parent' Gaussian process with correlation $\rho(\bm{h})$.
For this reason, henceforth, in some cases, we will call  $Y^*_{\nu}$ a standard $t$ process with underlying correlation $\rho(\bm{h})$.

In what follows, we make use of  the Gauss  hypergeometric function defined by  \citep{Gradshteyn:Ryzhik:2007}:
\begin{equation}\label{hyy}
{}_2F_1(a,b,c;x)=\sum\limits_{k=0}^{\infty}\frac{(a)_k (b)_k}{(c)_k}\frac{x^k}{k!},\;\;\;
\end{equation}
with $(s)_{k}=  \Gamma(s+k)/\Gamma(s)$ for $k \in  \mathbb{N}\cup\{ 0\}$
being the Pochhammer symbol and  we  consider the  restrictions   $a>0$, $b>0$, $c>0$ and $x\geq 0$. If  $c>a+b$
 the radius of convergence of (\ref{hyy}) is  $0\leq x\leq1$ and, in particular  (\ref{hyy})   is convergent at $x=1$  trough the identity: 
\begin{equation}\label{JJ}
{}_2F_1\left(a,b;c;1\right)=\frac{\Gamma(c)\Gamma(c-a-b)}{\Gamma(c-a)\Gamma(c-b)}.
\end{equation}

We  also consider  the Appell hypergeometric  function of the fourth type  \citep{Gradshteyn:Ryzhik:2007} defined as:
\begin{equation*}
F_4(a,b;c,c';w,z)=\sum\limits_{k=0}^{\infty}\sum\limits_{m=0}^{\infty}\frac{(a)_{k+m}(b)_{k+m}w^kz^m}{k!m!(c)_k(c')_m},\;\;\;\;|\sqrt{w}|+|\sqrt{z}|<1.
\end{equation*}

The following Theorem gives an analytic expression for $\rho_{Y^*_{\nu}}(\bm{h})$
in terms of the Gauss hypergeometric function.

\begin{theo}\label{theo0}
Let $Y^*_{\nu}$ be  a standardized $t$ process with underlying correlation $\rho(\bm{h})$. Then:
\begin{equation}\label{CC}
\rho_{Y^*_{\nu}}(\bm{h})=\frac{(\nu-2)\Gamma^2\left(\frac{\nu-1}{2}\right)}{2\Gamma^2\left(\frac{\nu}{2}\right)}
    \left[{}_2F_1\left(\frac{1}{2},\frac{1}{2};\frac{\nu}{2};\rho^2(\bm{h})\right)\rho(\bm{h})\right].
\end{equation}
\end{theo}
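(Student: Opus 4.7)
The plan is to compute the correlation directly from the definition $Y^*_\nu(\bm{s}) = W_\nu(\bm{s})^{-1/2}G(\bm{s})$, exploiting the independence of the mixing process $W_\nu$ and the parent Gaussian process $G$. Since $E(G(\bm{s}))=0$ and $W_\nu$ is independent of $G$, we immediately get $E(Y^*_\nu(\bm{s}))=0$, and from the marginal $t$ density in (\ref{ut}) we know $\Var(Y^*_\nu(\bm{s}))=\nu/(\nu-2)$ for $\nu>2$. Therefore I only need to evaluate the covariance at two sites $\bm{s}_i,\bm{s}_j$ with lag $\bm{h}=\bm{s}_i-\bm{s}_j$.

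Using independence, the covariance factorizes as
\begin{equation*}
\Cov(Y^*_\nu(\bm{s}_i),Y^*_\nu(\bm{s}_j)) = E\!\left[(W_\nu(\bm{s}_i)W_\nu(\bm{s}_j))^{-1/2}\right]\cdot E[G(\bm{s}_i)G(\bm{s}_j)] = \rho(\bm{h})\cdot E\!\left[(W_\nu(\bm{s}_i)W_\nu(\bm{s}_j))^{-1/2}\right].
\end{equation*}
Dividing by $\nu/(\nu-2)$ gives the correlation, so the entire problem reduces to computing the mixed negative half-moment of the bivariate Gamma marginals of $W_\nu$.

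The key calculation is the evaluation of the expectation above. I would use the Krishnamoorthy–Parthasarathy bivariate Gamma density for $(W_\nu(\bm{s}_i),W_\nu(\bm{s}_j))$ with shape $\nu/2$, scale $\nu/2$ and correlation $\rho_{W_\nu}(\bm{h})=\rho^2(\bm{h})$ (as recalled right before (\ref{rut})), and expand its modified Bessel component as a power series in $\rho^2(\bm{h})w_1 w_2$. Integrating term by term over $w_1,w_2>0$ produces two independent univariate Gamma integrals whose values are ratios of Gamma functions; specifically one encounters $\Gamma(\nu/2-1/2+k)/\Gamma(\nu/2+k)$ for each summand. After simplification the Pochhammer symbols $(1/2)_k(1/2)_k/((\nu/2)_k k!)$ appear and the series collapses to ${}_2F_1(1/2,1/2;\nu/2;\rho^2(\bm{h}))$, while the normalizing constants combine to give the prefactor $(\nu-2)\Gamma^2((\nu-1)/2)/(2\Gamma^2(\nu/2))$ after multiplying by $(\nu-2)/\nu$.

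The main obstacle is the series-integration bookkeeping in this last step: ensuring that the Bessel expansion converges uniformly enough to interchange summation and integration, and then carefully matching the resulting Gamma-function identities with the defining series of ${}_2F_1$. An alternative route, which might be technically cleaner, is to write $W_\nu(\bm{s})=\nu^{-1}\sum_{r=1}^\nu G_r(\bm{s})^2$, condition on the full Gaussian copies, and use the representation $x^{-1/2}=\pi^{-1/2}\int_0^\infty t^{-1/2}e^{-tx}dt$ to linearize the inverse square roots; the resulting Gaussian integrals are tractable and lead to the same hypergeometric identity. Either way, the appearance of ${}_2F_1$ is the unavoidable fingerprint of integrating $(w_1 w_2)^{-1/2}$ against a bivariate Gamma density.
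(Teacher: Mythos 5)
Your proposal follows essentially the same route as the paper's proof: reduce the correlation to the mixed moment $\E\big[(W_\nu(\bm{s}_i)W_\nu(\bm{s}_j))^{-1/2}\big]$ via the independence of $W_\nu$ and $G$, then evaluate that moment against the Krishnamoorthy-type bivariate Gamma density by expanding the Bessel factor as a power series and integrating term by term. The only detail you elide is that the termwise integration yields the series for ${}_2F_1\big(\tfrac{\nu-1}{2},\tfrac{\nu-1}{2};\tfrac{\nu}{2};\rho^2(\bm{h})\big)$ multiplied by a power of $1-\rho^2(\bm{h})$, and an Euler transformation is still needed to reach the stated form ${}_2F_1\big(\tfrac12,\tfrac12;\tfrac{\nu}{2};\rho^2(\bm{h})\big)$; the Pochhammer symbols $(1/2)_k$ do not emerge directly from the Gamma integrals as written.
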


The following Theorem depicts some features of the $t$ process.
It turns out that nice properties such as stationarity, mean-square continuity and degrees of mean-square differentiability
can be inherited from the `parent' Gaussian process $G$.
Further, the $t$ process  has  long-range dependence when the `parent' Gaussian process  has long-range dependence
and this can be achieved when the correlation has some specific features. For instance, the generalized Cauchy  \citep{GneitingS:2004,LiTe09}
and  Dagum \citep{berg2008} correlation models can lead to a Gaussian process with long range dependence.
Finally, an appealing  and intuitive feature is that the correlation of $Y^*_{\nu}$
approaches the correlation of $G$ when $\nu \to \infty$.

\begin{theo}\label{theoiii}
Let $Y^*_{\nu}$, $\nu>2$ be  a standardized $t$ process with underlying correlation $\rho(\bm{h})$. Then:
\begin{enumerate}
\item[a)] $Y^*_{\nu}$ is  also  weakly  stationary;
\item[b)]  $Y^*_{\nu}$ is mean-square continuous if and only if $G$ is mean-square continuous;
\item[c)]   Let $G$
 $m$-times mean-square differentiable, for  $m=0,1,\ldots$
 \begin{itemize}
 \item If   $\nu > 2(2m+1)$  then $Y^*_{\nu}$  is  $m$-times mean-square differentiable;
 \item If   $\nu \leq 2(2m+1)$  then  $Y^*_{\nu}$ is $(m-k)$-times mean-square differentiable
if
  $2(2(m-k)+1)< \nu \leq  2(2(m-k)+3)$,  for $k=1, \ldots, m$.
 \end{itemize}
\item[d)]  $Y^*_{\nu}$ is  a long-range dependent process  if and only if $G$ is a  long-range dependent process
\item[e)] 
$\rho_{Y^*_{\nu}}(\bm{h})\leq \rho(\bm{h})$ and
$\lim\limits_{\nu \to \infty } \rho_{Y^*_{\nu}}(\bm{h})=\rho(\bm{h})$.
\end{enumerate}
\end{theo}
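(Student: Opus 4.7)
The plan is to build everything on the explicit correlation formula of Theorem \ref{theo0}, which I write compactly as $\rho_{Y^{*}_{\nu}}(\bm{h})=C_{\nu}\rho(\bm{h})h_{\nu}(\rho^{2}(\bm{h}))$ with $h_{\nu}(x)={}_{2}F_{1}(1/2,1/2;\nu/2;x)$ and $C_{\nu}=(\nu-2)\Gamma^{2}((\nu-1)/2)/(2\Gamma^{2}(\nu/2))$. Two structural facts about $h_{\nu}$ will be used throughout. First, since $\nu>2$ one may apply Gauss's summation theorem to obtain $h_{\nu}(1)=\Gamma(\nu/2)\Gamma((\nu-2)/2)/\Gamma^{2}((\nu-1)/2)$, and a short algebraic check shows $C_{\nu}h_{\nu}(1)=1$. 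Second, all Taylor coefficients of $h_{\nu}$ are strictly positive, so $h_{\nu}$ is strictly increasing and strictly positive on $[0,1]$ with $h_{\nu}(0)=1$; this also makes the map $u\mapsto C_{\nu}uh_{\nu}(u^{2})$ strictly increasing on $[-1,1]$.

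Part (a) is immediate: $\E(Y^{*}_{\nu}(\bm{s}))=0$ and $\Var(Y^{*}_{\nu}(\bm{s}))=\nu/(\nu-2)$ are constants, while Theorem \ref{theo0} shows the correlation depends on $\bm{h}$ only through $\rho(\bm{h})$. For part (b), I would use that mean-square continuity of a stationary process is equivalent to continuity of its correlation at the origin: the forward implication then follows from continuity of $u\mapsto C_{\nu}uh_{\nu}(u^{2})$ at $u=1$ together with the identity $C_{\nu}h_{\nu}(1)=1$, while the converse follows from the strict monotonicity of the same function, since only $\rho(\bm{h})\to 1$ can produce $\rho_{Y^{*}_{\nu}}(\bm{h})\to 1$.

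For part (c) I would apply Fa\`a di Bruno's formula to the composition $\rho_{Y^{*}_{\nu}}(\bm{h})=\varphi(\rho(\bm{h}))$ with $\varphi(u)=C_{\nu}uh_{\nu}(u^{2})$. Since $\rho\in C^{2m}$ at the origin (the standard characterization of $m$-fold mean-square differentiability) and $1-\rho^{2}(\bm{h})=O(|\bm{h}|^{2})$ as $\bm{h}\to 0$ (the correlation attains its maximum at $\bm{h}=0$), I would split $\varphi$ near $u=1$ using the connection formula for ${}_{2}F_{1}$ into a smooth part plus a singular part proportional to $(1-u^{2})^{\nu/2-1}$; substituting the Taylor expansion of $\rho^{2}(\bm{h})$ then yields the required $2m$ continuous derivatives of $\rho_{Y^{*}_{\nu}}$ at $\bm{h}=0$. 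A complementary argument at the process level is also available: $G$ and $W_{\nu}$ are independent and $W_{\nu}$ is assembled from $m$-fold mean-square differentiable copies of $G$, so one checks directly that the scale mixture $W_{\nu}^{-1/2}G$ inherits the same order of differentiability under the moment conditions on $W_{\nu}^{-1}$ available for $\nu>2$.

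Finally, for part (d) I observe that as $|\bm{h}|\to\infty$ we have $\rho(\bm{h})\to 0$ and hence $h_{\nu}(\rho^{2}(\bm{h}))\to h_{\nu}(0)=1$, so $\rho_{Y^{*}_{\nu}}(\bm{h})\sim C_{\nu}\rho(\bm{h})$ at infinity; integrability on $\R^{d}$ is therefore equivalent for the two correlations, which is the if-and-only-if. For part (e), strict monotonicity of $h_{\nu}$ on $[0,1]$ gives $h_{\nu}(\rho^{2}(\bm{h}))\le h_{\nu}(1)=1/C_{\nu}$, whence $|\rho_{Y^{*}_{\nu}}(\bm{h})|\le|\rho(\bm{h})|$; for the limit, Stirling's approximation yields $C_{\nu}\to 1$, while for fixed $x\in[0,1)$ every term of the series $h_{\nu}(x)$ with $k\ge 1$ vanishes because $(\nu/2)_{k}\to\infty$, so $h_{\nu}(x)\to 1$, and combining gives $\rho_{Y^{*}_{\nu}}(\bm{h})\to\rho(\bm{h})$. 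The main obstacle is part (c): the function $h_{\nu}$ has a branch-type singularity at $x=1$ whose order $(1-x)^{\nu/2-1}$ depends on $\nu$, and carefully tracking how this singularity is absorbed by the vanishing of $1-\rho^{2}(\bm{h})$ at the origin is the delicate step that the algebraic formula of Theorem \ref{theo0} does not resolve by itself.
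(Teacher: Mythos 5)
Your treatment of parts (a), (b), (d) and (e) follows essentially the same route as the paper: everything is read off the composition $\rho_{Y^*_{\nu}}(\bm{h})=g(\rho(\bm{h}))$ with $g(u)=C_{\nu}\,u\,{}_2F_1(1/2,1/2;\nu/2;u^2)$. In two places you are in fact more careful than the paper's own argument: you supply the monotonicity needed for the converse direction of (b) (continuity of $g\circ\rho$ at $\bm{0}$ forces continuity of $\rho$ only because $g$ is strictly increasing with $g(1)=1$), and for (e) you derive $C_{\nu}h_{\nu}(1)=1$ from Gauss's summation theorem and combine it with the positivity of the Taylor coefficients of $h_{\nu}$, whereas the paper's bare appeal to $0<a(\nu)\le 1$ does not by itself yield $g(u)\le u$; note also that for $\rho(\bm{h})<0$ the correct statement is $|\rho_{Y^*_{\nu}}(\bm{h})|\le|\rho(\bm{h})|$, which is what your argument actually proves.

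Part (c) is where the genuine gap lies, and it is a gap you half-diagnose but do not close. You correctly observe that $h_{\nu}$ has a branch singularity of order $(1-x)^{\nu/2-1}$ at $x=1$ (the paper's proof ignores this entirely and asserts, wrongly, that $g$ is infinitely differentiable on all of $[-1,1]$), but your claim that substituting $1-\rho^2(\bm{h})=O(\|\bm{h}\|^2)$ ``yields the required $2m$ continuous derivatives'' fails: the singular contribution behaves like $(1-\rho^2(\bm{h}))^{\nu/2-1}\asymp\|\bm{h}\|^{\nu-2}$ near the origin, which is smooth only when $\nu/2-1$ is a nonnegative integer and otherwise has only about $\nu-3$ continuous derivatives there, no matter how smooth $\rho$ is. The cleanest counterexample is $\nu=3$, where $g(u)=\tfrac{2}{\pi}\arcsin u$, so that $\rho(\bm{h})=1-c\|\bm{h}\|^2+o(\|\bm{h}\|^2)$ gives $\rho_{Y^*_{3}}(\bm{h})=1-\tfrac{2}{\pi}\sqrt{2c}\,\|\bm{h}\|+o(\|\bm{h}\|)$: this is not twice differentiable at the origin, hence $Y^*_{3}$ is not mean-square differentiable even when $G$ is analytic. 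Your ``complementary'' process-level argument breaks at the same point, since differentiating $W_{\nu}^{-1/2}G$ in mean square requires negative moments $\E(W_{\nu}^{-k})$ of orders up to $k=2m+1$, and these are finite only for $k<\nu/2$. Any proof of (c) therefore needs a restriction tying $\nu$ to $m$ (roughly $\nu>4m+2$, or $\nu/2-1\in\N$); as written, neither your argument nor the paper's establishes the statement in the generality claimed.
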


One implication of Theorem  (\ref{theoiii}) point c) is that  the process $Y^*_{\nu}$  inherits the mean square differentiability of $G$
under the condition $\nu > 2(2m+1)$. Otherwise, the mean square differentiability  depends on $\nu$.
  For instance,  if $G$ is one time mean square differentiable   then
 $Y^*_{\nu}$ can be zero or one time differentiable depending if    $\nu>6$ or not.

{\bf Remark 3}: A simplified  version of  the $t$ process in Equation (\ref{rut}), can be obtained assuming  $W_{\nu}(\bm{s}_i)  \perp W_{\nu}(\bm{s}_j) , i\neq j$.
Under this assumption, $Y^*_{\nu}$ is still a process with $t$ marginal distribution
but, in this case, the geometrical properties are not inherited from the `parent' Gaussian process $G$.
In particular,  it can be shown that
the resulting correlation function   exhibits  a discontinuity at the origin and, as a consequence, the process is  not mean-square continuous.
A  not   mean-square continuous  version of the $t$
process   in Equation (\ref{rut}),
can be obtained by introducing a nugget effect, i.e., a discontinuity of $\rho_{Y^*_{\nu}}(\bm{h})$
at the origin.
This can be easily  achieved   by replacing
$\rho(\bm{h})$ in (\ref{CC})  with
$\rho^*(\bm{h})=1$ if $\bm{h}=\bm{0}$ and
$\rho^*(\bm{h})=(1-\tau^2)\rho(\bm{h})$ otherwise,
where $0\leq\tau^2<1$ represents the underlying  nugget effect.


Since the $t$ process inherits some of the geometrical properties of the `parent' Gaussian process,
the choice of the covariance function  is crucial.
Two flexible isotropic models that allow  parametrizing in a continuous  fashion  the mean square differentiability of a Gaussian process and its  sample paths
 are as follows:

\begin{enumerate}
	\item the  Mat{\'e}rn correlation function \citep{Matern:1960}
	
	\begin{equation}\label{mt}
	{\cal  M}_{\alpha,\psi}(\bm{h})=
	\frac{2^{1-\psi}}{\Gamma(\psi)} \left ({||\bm{h}||}/{\alpha}
	\right )^{\psi} {\cal K}_{\psi} \left({||\bm{h}||}/{\alpha} \right ),
	\end{equation}
	
	where ${\cal K}_{\psi}$ is a modified Bessel function of the second kind of
	order $\psi$.
	 Here, $\alpha>0$  and $\psi>0$ guarantee the positive definiteness of the model in any dimension.
	

	\item the   Generalized Wendland correlation function \citep{Gneiting:2002b},  defined for $\psi>0$ as:
	\begin{equation} \label{WG2*}
	{\cal  GW}_{\alpha,\psi,\delta}(\bm{h}):=
	\begin{cases}\frac{ \int_{||\bm{h}||/\alpha}^{1} u\left(u^2- (||\bm{h}||/\alpha)^2\right)^{\psi-1} \left ( 1- u \right )^{\delta} d u}{{\operatorname{B}(2\psi,\delta+1)}}
	& ||\bm{h}||< \alpha\\
	0 & \text{otherwise} \end{cases},
	\end{equation}
	and for $\psi=0$  as:
	
		\begin{equation}\label{ask}
	{\cal  GW}_{\alpha,0,\delta}(\bm{h}):=
	\begin{cases}\left ( 1- ||\bm{h}|| / \alpha\right )^{\delta}
	& ||\bm{h}||< \alpha\\
	0 & \text{otherwise} \end{cases}.
	\end{equation}
	Here $\operatorname{B}(\cdot,\cdot)$ is the Beta function and $\alpha>0$, 
	  $\delta \ge  (d+1)/2+ \psi$
	 guarantee the positive definiteness of the model in $\R^d$.

\end{enumerate}

In particular for a
positive integer $k$, the sample paths  of a Gaussian process are $k$ times differentiable
if and only if  $\psi >k$ in the Mat{\'e}rn case  \citep{Stein:1999}
and if and only if  $\psi >k-1/2$ in the Generalized Wendland case  \citep{Bevilacqua_et_al:2018}.
Additionally, the  Generalized Wendland correlation is compactly supported, an interesting feature from computational point of view \citep{Furrer:2006},
which is inherited  by the $t$ process
since $\rho(\bm{h})=0$ implies   $\rho_{Y^*_{\nu}}(\bm{h})=0$.

In order to illustrate some geometric features of the $t$ process,
we first compare the correlation functions of the Gaussian and $t$ processes
using an underlying   Mat{\'e}rn model.
In Figure \ref{fig:fcovt} (left  part) we compare $\rho_{Y^*_{\nu}}(\bm{h})$ when $\nu=3, 7$
with the correlation of the `parent' Gaussian process
$\rho(\bm{h})={\cal M}_{1.5,\alpha^*}( \bm{h})$ where $\alpha^*$ is chosen such that the practical range is $0.2$. It is apparent that when increasing the
degrees of freedom $\rho_{Y^*_{\nu}}(\bm{h})$ approaches $\rho(\bm{h})$ and that the smoothness at the origin of $\rho_{Y^*_{\nu}}(\bm{h})$ is inherited by  the smoothness of the   Gaussian correlation $\rho(\bm{h})$ when $\nu=7$ and  if $\nu=3$ then $\rho_{Y^*_{\nu}}(\bm{h})$ is not differentiable at the origin. 
On
the right side of Figure (\ref{fig:fcovt}) we compare  a kernel
nonparametric density estimation  of a  realization of $G$  and  a realization of $Y^*_{7}$ (approximately 10000 location sites in the unit square)
 using $\rho(\bm{h})={\cal M}_{1.5,\alpha^*}( \bm{h})$.

In  Figure \ref{fig:f2} (a) and (b), we compare,  from left to
right,  two realizations of $G$  with $\rho(\bm{h})={\cal M}_{0.5,
\alpha^*}(\bm{h})$ and $\rho(\bm{h})={\cal M}_{1.5, \alpha^*}(\bm{h})$
where $\alpha^*$ is chosen such that the practical range is $0.2$.
In this case, the sample paths of $G$ are zero and one  times
differentiable. From the bottom
part of Figure \ref{fig:f2} (c) and (d) it can be appreciated   that this
feature is inherited by the
associated  realizations of $Y^*_{7}$.

\begin{figure}[h!]
\begin{tabular}{cc}
\includegraphics[width=7.3cm, height=6.8cm]{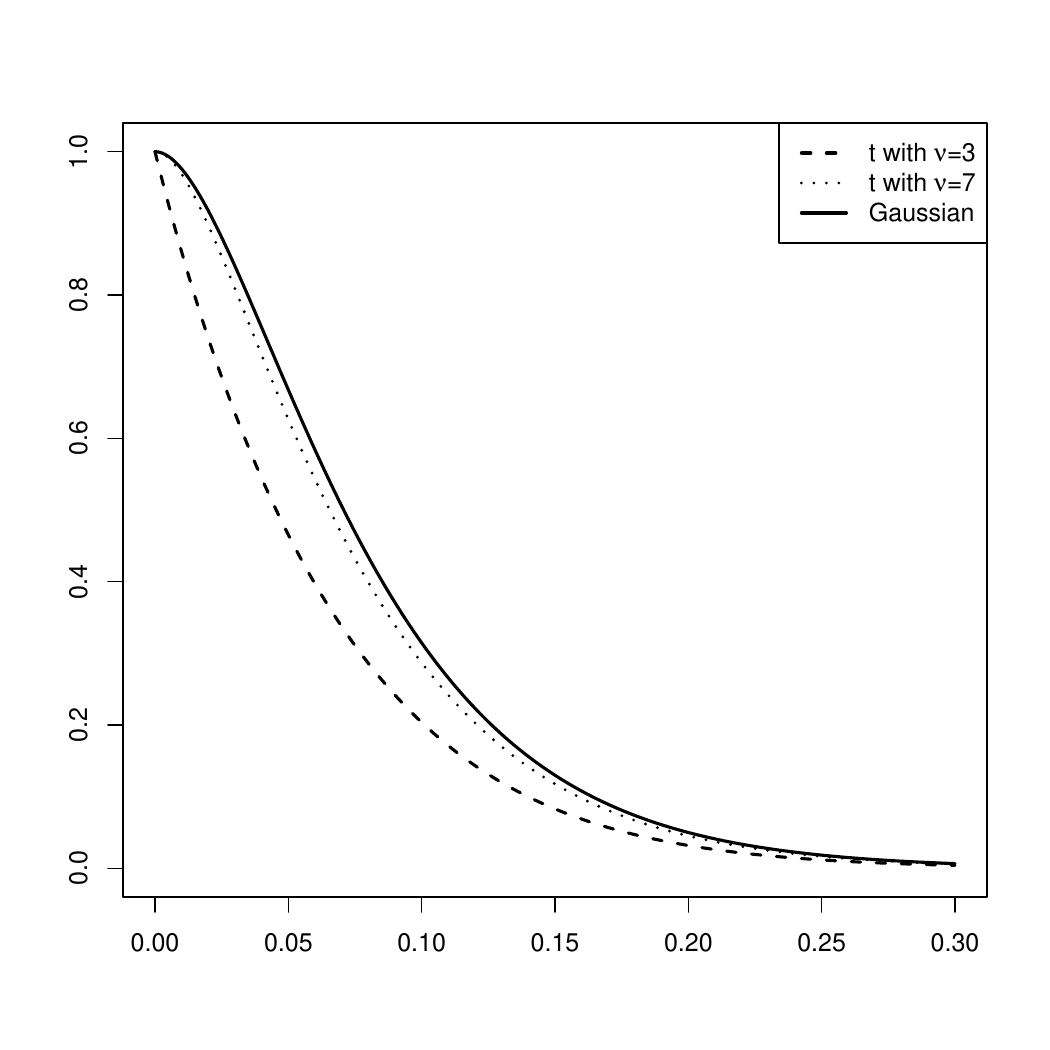} & \includegraphics[width=7.3cm, height=6.8cm]{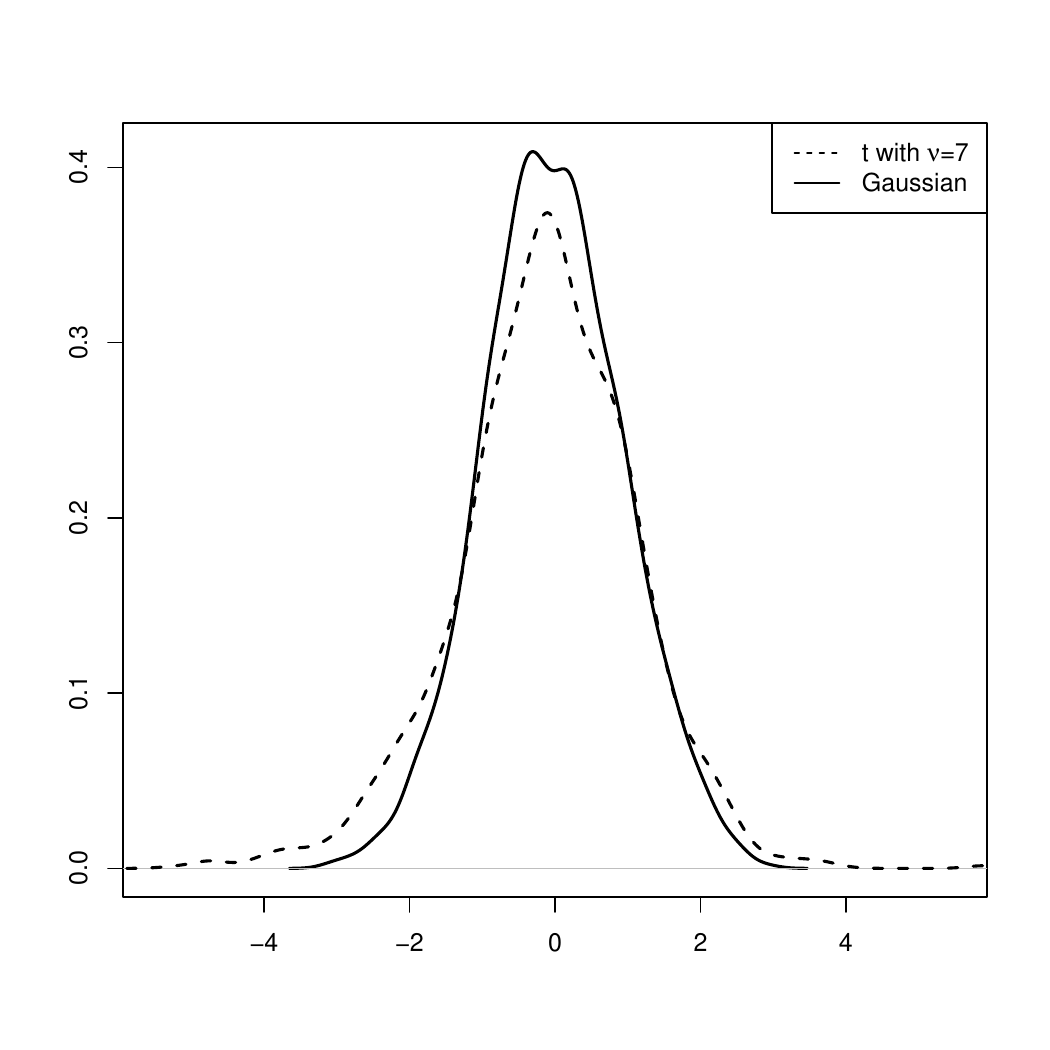}  \\
\end{tabular}
\caption{Left part:
 comparison of $\rho_{Y^*_{\nu}}(\bm{h})$, $\nu=3, 7$
with  the correlation $\rho(\bm{h})$ of the `parent' Gaussian process $G$
when $\rho(\bm{h})={\cal M}_{1.5,\alpha^*}( \bm{h})$  with $\alpha^*$ such that the practical range is $0.2$.
Right part: a comparison of  a nonparametric kernel density estimation of  realizations from $G$ and
from the $t$ process $Y^*_{7}$. \label{fig:fcovt}}
\end{figure}

\begin{figure}[h!]
\begin{tabular}{cc}
\includegraphics[width=7.1cm, height=6.4cm]{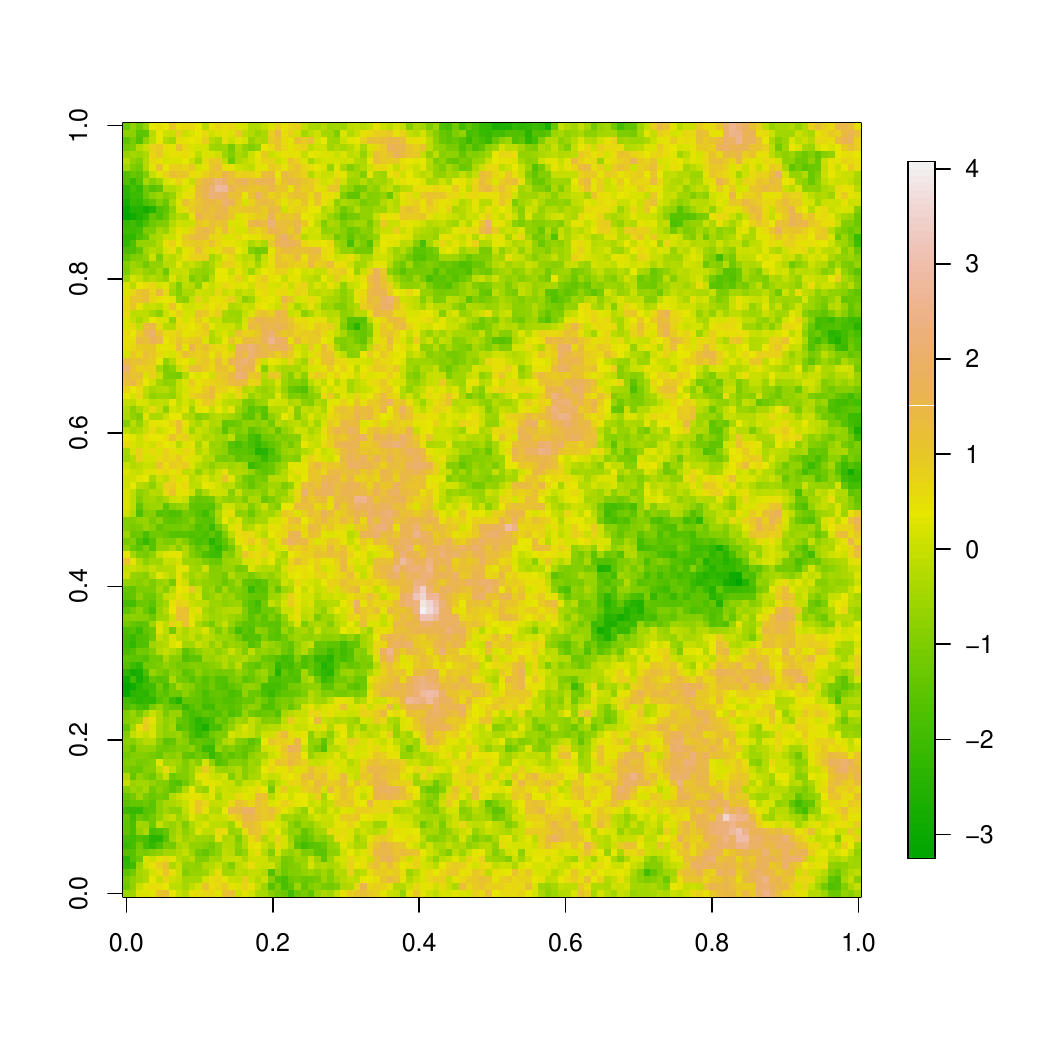} & \includegraphics[width=7.1cm, height=6.4cm]{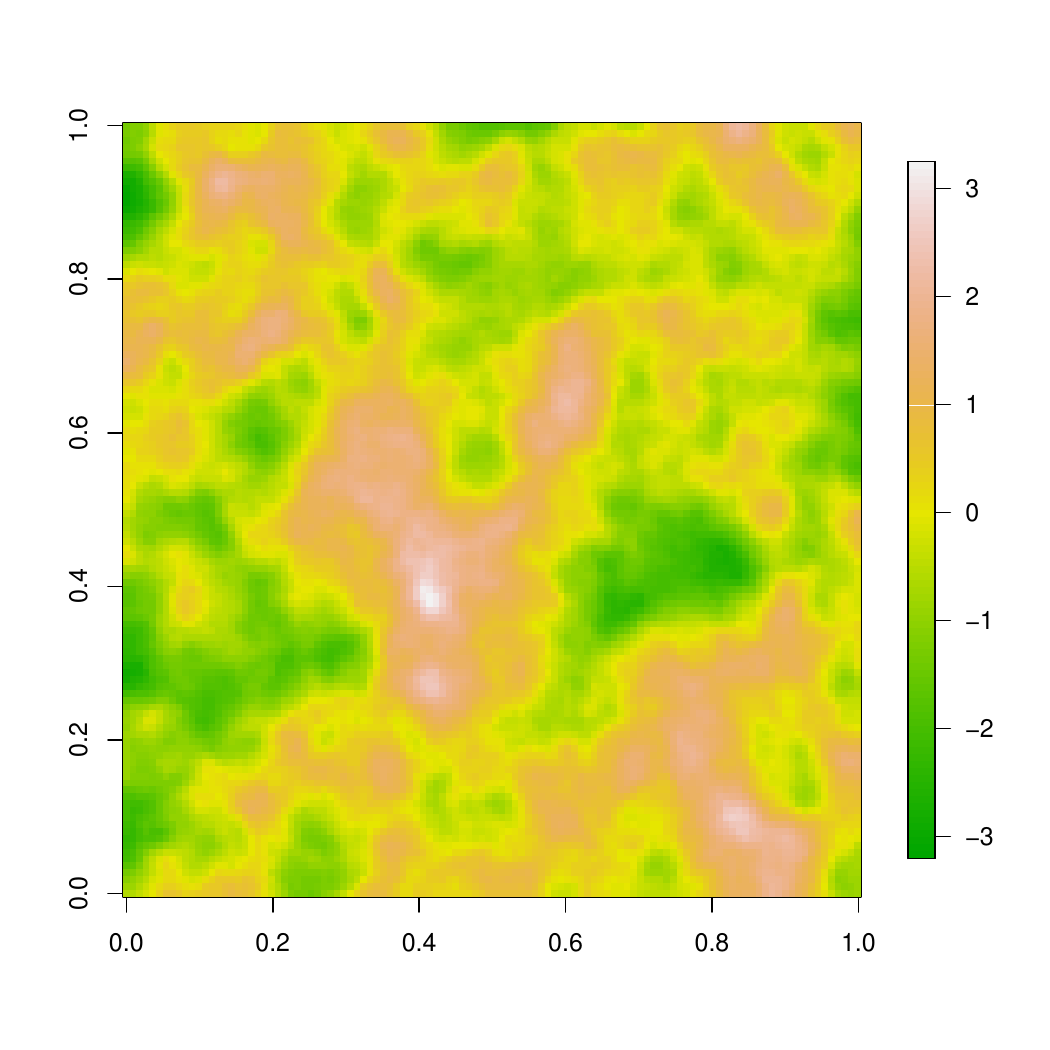}  \\
(a)&(b)\\
\includegraphics[width=7.1cm, height=6.4cm]{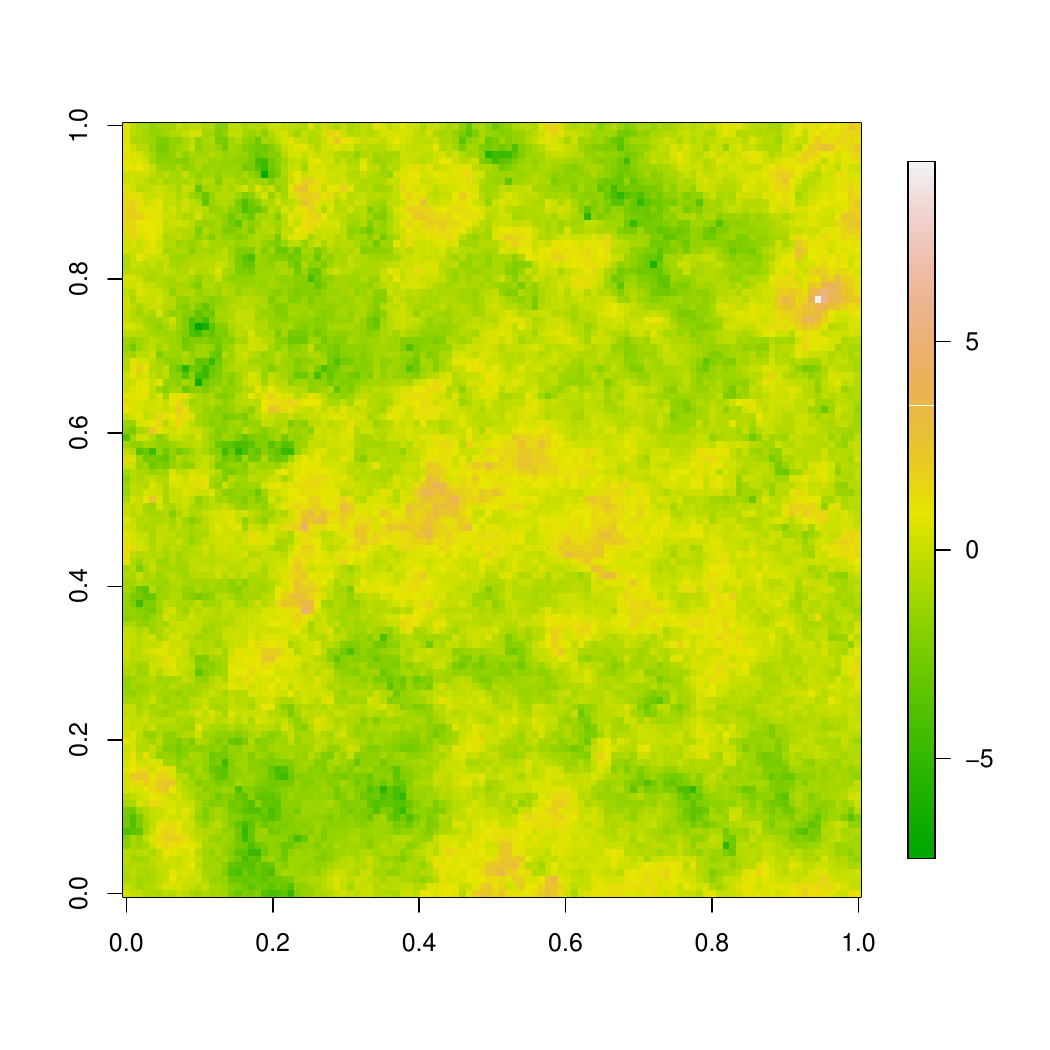} & \includegraphics[width=7.1cm, height=6.4cm]{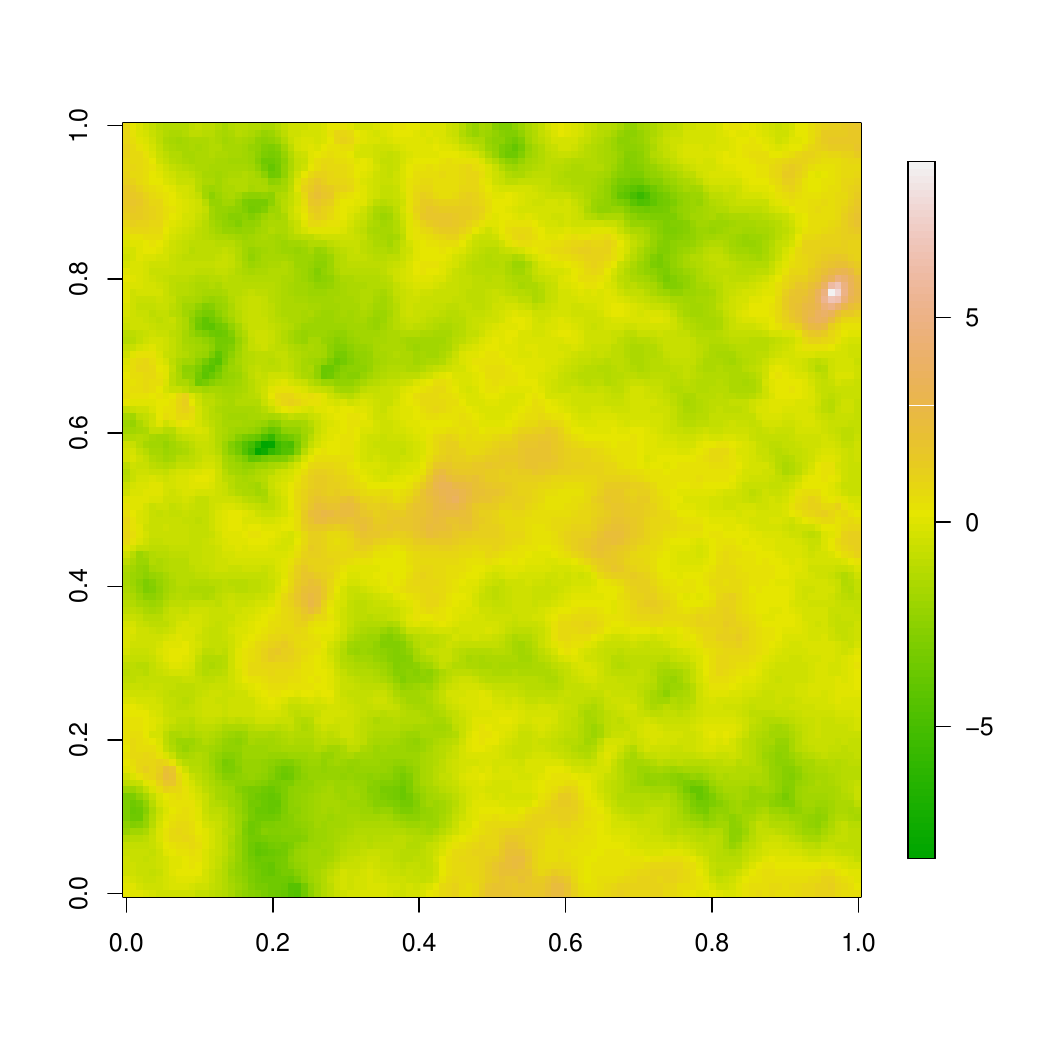}  \\
(c)&(d)\\
\end{tabular}
\caption{
Upper part: two realizations of  the `parent'  Gaussian process $G$ on $[0,1]^2$ with  (a) $\rho(\bm{h})={\cal M}_{0.5, \alpha^*}(\bm{h})$
and  (b) $\rho(\bm{h})={\cal M}_{1.5, \alpha^*}(\bm{h})$ (from left to right)
with $\alpha^*$ such that the practical range is approximatively $0.2$.
 Bottom part: (c) and (d)  associated realizations of the $t$ process $Y^*_{7}$. } \label{fig:f2}
\end{figure}

We now consider the  bivariate random vector associated with
$Y^*_{\nu}$ defined by:
\begin{equation*}
\bm{Y}^*_{\nu;ij}=\bm{W}^{-\frac{1}{2}}_{\nu;ij}\circ\bm{G}_{ij}
\end{equation*}
where $\circ$ denotes the Schur product vector.
 The following Theorem gives the pdf of $\bm{Y}^*_{\nu;ij}$ in terms of    the Appell function $F_4$.
 It can be viewed as a generalization of the generalized bivariate $t$ distribution proposed in  \cite{miller1968}.


\begin{theo}\label{theo3}

Let $Y^*_{\nu}$, $\nu>2$ be   a standard $t$ process with underlying correlation $\rho(\bm{h})$.
Then:
\begin{footnotesize}
\begin{align}\label{pairt1}
f_{\bm{Y}^*_{\nu;ij}}(y_{i},y_j)&=\frac{\nu^{\nu}l_{ij}^{-\frac{(\nu+1)}{2}}\Gamma^2\left(\frac{\nu+1}{2}\right)}{\pi\Gamma^2\left(\frac{\nu}{2}\right)(1-\rho^2(\bm{h}))^{-(\nu+1)/2}}
F_4\left(\frac{\nu+1}{2},\frac{\nu+1}{2},\frac{1}{2},\frac{\nu}{2};\frac{\rho^2(\bm{h})y_i^2y_j^2}{l_{ij}},\frac{\nu^2\rho^2(\bm{h})}{l_{ij}}\right)\nonumber\\
&+\frac{\rho(\bm{h})y_iy_j\nu^{\nu+2}l_{ij}^{-\frac{\nu}{2}-1}}{2\pi(1-\rho^2(\bm{h}))^{-\frac{(\nu+1)}{2}}}
F_4\left(\frac{\nu}{2}+1,\frac{\nu}{2}+1,\frac{3}{2},\frac{\nu}{2};\frac{\rho^2(\bm{h})y_i^2y_j^2}{l_{ij}},
\frac{\nu^2\rho^2(\bm{h})}{l_{ij}}\right)
\end{align}
\end{footnotesize}
where $l_{ij}=[(y_i^2+\nu)(y_j^2+\nu)]$.
\end{theo}

\begin{figure}[h!]
\begin{tabular}{cc}
\includegraphics[width=7.3cm, height=6.5cm]{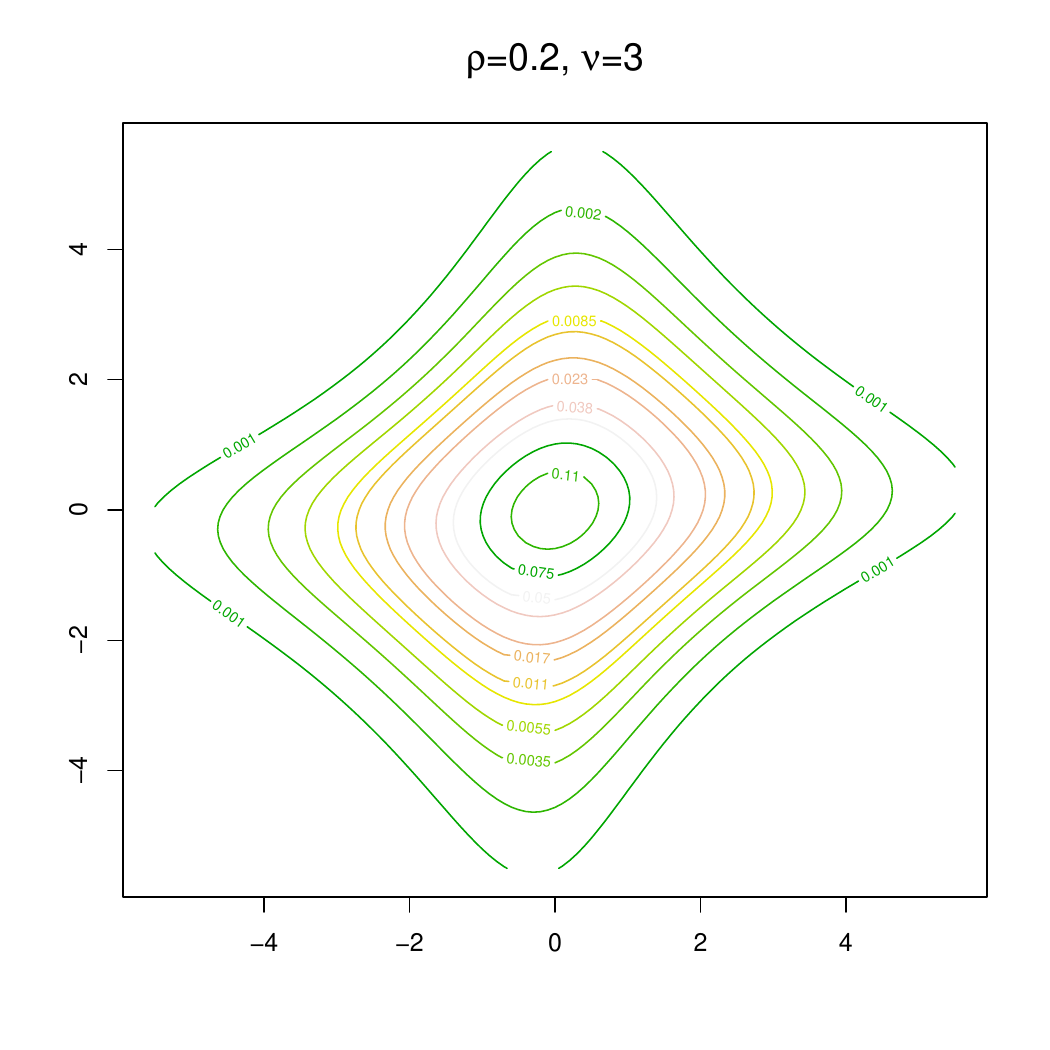} & \includegraphics[width=7.3cm, height=6.5cm]{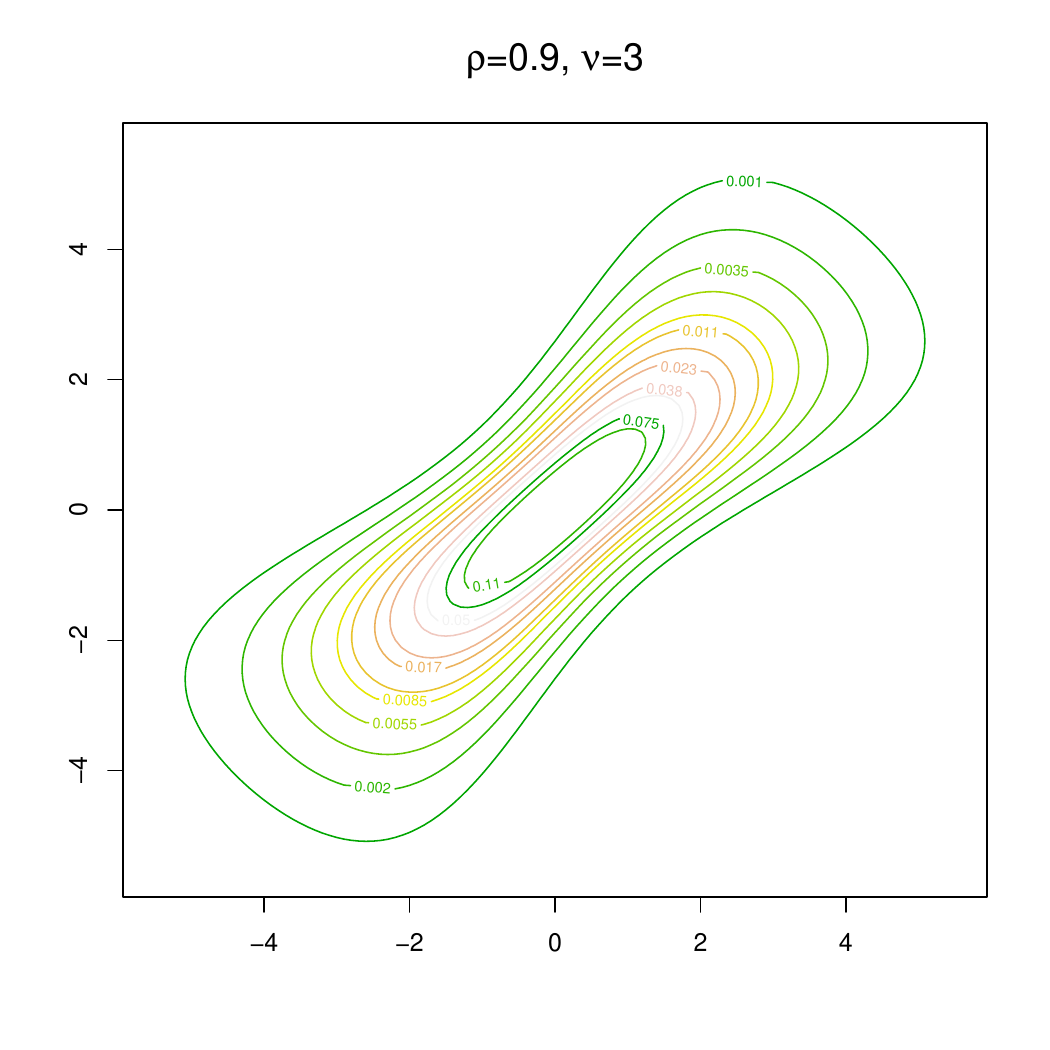} \\
\includegraphics[width=7.3cm, height=6.5cm]{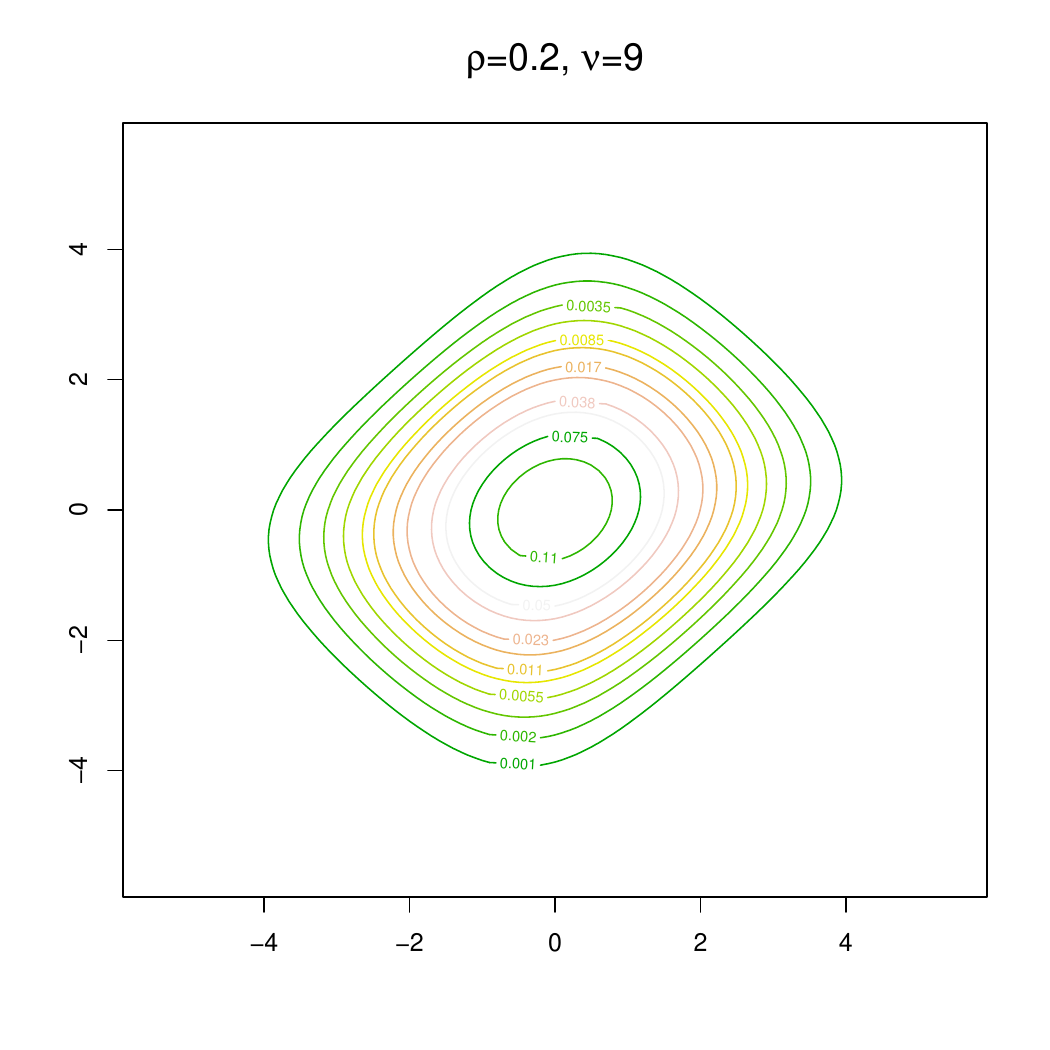} & \includegraphics[width=7.3cm, height=6.5cm]{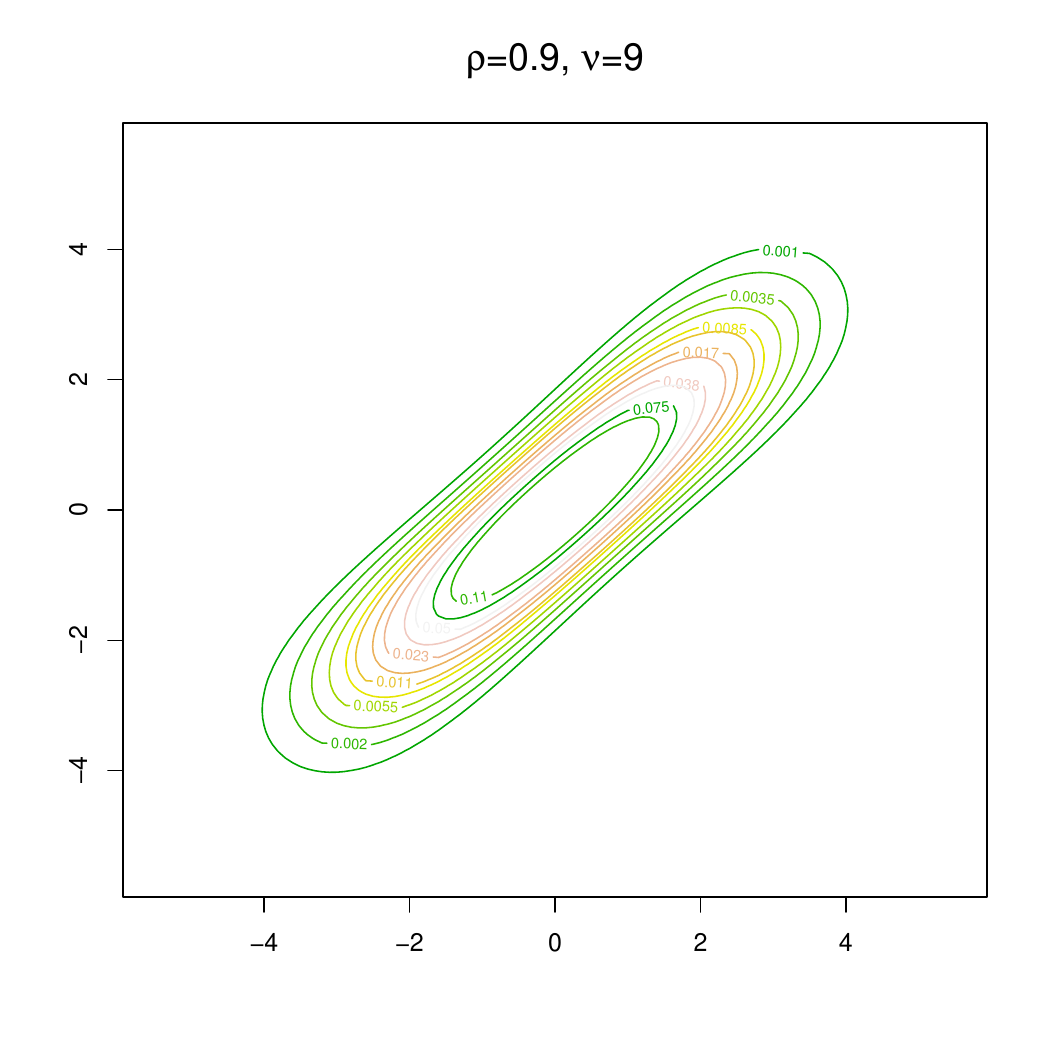} \\
\end{tabular}
\caption{Contour plots of the bivariate $t$ distribution (\ref{pairt1}) when $\rho(\bm{h})=0.2, 0.9$ and $\nu=3,9$.} \label{cont}
\end{figure}

{\bf Remark 4}:
Note that $f_{\bm{Y}^*_{\nu;ij}}(y_{i},y_j)$
is  defined for  $\nu>2$  irrespectively  of the correlation function since it is obtained from a  bivariate Gamma distribution (see Remark 2).
Moreover, when $\rho(\bm{h})=0$, according to (\ref{apell4}) and using the identity ${}_2F_1(a,b;c';0)=1$,
we obtain $F_4(a,b;c,c';0,0)=1$,  and as a consequence,
$f_{\bm{Y}^*_{\nu;ij}}(y_{i},y_j)$ can be written as the product of
two independent $t$ random variables with $\nu$ degrees of
freedom. Thus,   zero pairwise  correlation implies pairwise independence,  as in the Gaussian case.
Figure (\ref{cont}) show the contour plots of (\ref{pairt1}) when $\nu=3, 9$ and $\rho(\bm{h})=0.2, 0.9$.
It turns out that the bivariate $t$ distribution is not elliptical and when increasing  $\nu$ the contour plots tend towards
an elliptical form. 
Finally, the bivariate density of the process $Y_{\nu}$ is easily obtained from   (\ref{kkkkk2}):
\begin{equation}\label{azz}
f_{\bm{Y}_{\nu;ij}}(y_{i},y_j)=\frac{1}{\sigma^2}f_{\bm{Y}^*_{\nu;ij}}\left(\frac{y_{i}-\mu_i}{\sigma},\frac{y_j-\mu_j}{\sigma}\right).
\end{equation}


\section{A stochastic process with  skew-t marginal distribution}
\label{sec:3}

In this section we first review the skew-Gaussian process proposed in \cite{Zhang:El-Shaarawi:2010}. For this process, we provide
an explicit expression for the finite dimensional distribution
generalizing previous results in \cite{ALEGRIA2017}.
Then, using this skew-Gaussian process,  we propose a  generalization of the $t$ process $Y_{\nu}$ obtaining
a new process with marginal distribution of the skew-t type \citep{Azzalin:Capitanio:2014}.

Following \cite{Zhang:El-Shaarawi:2010}   a general construction for a process
with asymmetric marginal distribution is
given by:
\begin{equation}\label{repskew}
U_{\eta}(\bm{s}):=g(\bm{s})+\eta|X_1(\bm{s})|+\omega
X_2(\bm{s}),\;\;\;\bm{s}\in A\subset \mathbb{R}^d
\end{equation}
where  $\eta\in\mathbb{R}$,  $\omega>0$ and $X_i$ $i=1,2$ are two independents copies of a process $X=\{X(\bm{s}), \bm{s} \in A\}$
with symmetric marginals.
The parameters $\eta$ and $\omega$ allow  modeling the  asymmetry and variance of
the process simultaneously.

\cite{Zhang:El-Shaarawi:2010}  studied  the second-order properties of $U_{\eta}$ when $X\equiv G$.
In this case,  $U_{\eta}$ has  skew Gaussian marginal distributions \citep{Azzalin:Capitanio:2014} with pdf given by:
\begin{equation}\label{skewmar}
f_{U_{\eta}(\bm{s})}(u)=\frac{2}{(\eta^2+\omega^2)^{1/2}}\phi\left(\frac{(u-g(\bm{s}))}{(\eta^2+\omega^2)^{1/2}}\right)\Phi\left(\frac{\eta(u-g(\bm{s}))}{\omega(\eta^2+\omega^2)^{1/2}}\right)
\end{equation}
with $\E(U_{\eta}(\bm{s}))=g(\bm{s})+\eta(2/\pi)^{1/2}$,
$Var(U_{\eta}(\bm{s}))=\omega^2+\eta^2(1-2/\pi)$ and with
correlation function given:
\begin{footnotesize}
\begin{equation}\label{covskew}
\rho_{U_{\eta}}(\bm{h})=\frac{2\eta^2}{\pi\omega^2+\eta^2(\pi-2)}\left((1-\rho^2(\bm{h}))^{1/2}+\rho(\bm{h})\arcsin(\rho(\bm{h}))-1\right)+\frac{\omega^2\rho(\bm{h})}{\omega^2+\eta^2(1-2/\pi)}.
\end{equation}
\end{footnotesize}

The following theorem generalizes the results in \cite{ALEGRIA2017} and gives an explicit closed-form expression for the pdf of  the random vector $\bm{U}_{\eta}$.

\begin{theo}\label{theo1}
Let $U_{\eta}(\bm{s})=g(\bm{s})+\eta|X_1(\bm{s})|+\omega
X_2(\bm{s})$ where $X_i$ $i=1,2$ are two independent  copies of $G$ the `parent' Gaussian process.
Then:
\begin{equation}\label{genfdskew}
f_{\bm{U}_{\eta}}(\bm{u})=2\sum\limits_{l=1}^{2^{n-1}}\phi_n(\bm{u}-\bm{\alpha};\bm{A}_l)\Phi_n(\bm{c}_l;\bm{0},\bm{B}_l)
\end{equation}

where
\begin{align*}
  \bm{A}_l &= \omega^2\Omega+\eta^2\Omega_l \\
  \bm{c}_l &=\eta\Omega_l(\omega^2\Omega+\eta^2\Omega_l)^{-1}(\bm{u}-\bm{\alpha}) \\
  \bm{B}_l &=\Omega_l-\eta^2\Omega_l(\omega^2\Omega+\eta^2\Omega_l)^{-1}\Omega_l\\
  \bm{\alpha}&=[g(\bm{s}_i)]_{i=1}^n
\end{align*}
and the $\Omega_l$'s are correlation matrices that depend on the correlation matrix $\Omega$.
\end{theo}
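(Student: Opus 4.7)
The plan is to derive the joint density by conditioning on the hidden Gaussian vector $\bm{X}_1$, splitting the integration domain into orthants to deal with the absolute value, and then recognising a standard Gaussian factorisation that turns the resulting integral into the product of a density and a multivariate normal CDF.

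First, I would write the joint vector as $\bm{U}_{\eta}=\bm{\alpha}+\eta\lvert\bm{X}_1\rvert+\omega\bm{X}_2$, with $\bm{X}_1,\bm{X}_2$ i.i.d.\ $N_n(\bm{0},\Omega)$. Conditional on $\bm{X}_1=\bm{x}_1$, the vector $\bm{U}_{\eta}$ is $N_n(\bm{\alpha}+\eta\lvert\bm{x}_1\rvert,\omega^2\Omega)$, so
\begin{equation*}
f_{\bm{U}_{\eta}}(\bm{u})=\int_{\mathbb{R}^n}\phi_n\bigl(\bm{u}-\bm{\alpha}-\eta\lvert\bm{x}_1\rvert;\bm{0},\omega^2\Omega\bigr)\,\phi_n(\bm{x}_1;\bm{0},\Omega)\,d\bm{x}_1 .
\end{equation*}
I would then decompose $\mathbb{R}^n$ into its $2^n$ orthants, one for each signature matrix $\bm{S}\in\{-1,1\}^n$ (diagonal). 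On the orthant $\{\bm{S}\bm{x}_1\ge\bm{0}\}$ we have $\lvert\bm{x}_1\rvert=\bm{S}\bm{x}_1$, and the substitution $\bm{v}=\bm{S}\bm{x}_1$ (with $\bm{S}^{-1}=\bm{S}$ and unit Jacobian) transforms $\phi_n(\bm{x}_1;\bm{0},\Omega)$ into $\phi_n(\bm{v};\bm{0},\bm{S}\Omega\bm{S})$. Setting $\Omega_l:=\bm{S}_l\Omega\bm{S}_l$ yields, for each orthant,
\begin{equation*}
\int_{\bm{v}\ge\bm{0}}\phi_n(\bm{u}-\bm{\alpha}-\eta\bm{v};\bm{0},\omega^2\Omega)\,\phi_n(\bm{v};\bm{0},\Omega_l)\,d\bm{v}.
\end{equation*}

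Next, I would perform the standard ``conditional completion of the square'' for jointly Gaussian vectors. Viewing the product above as the joint density of $(\bm{u}-\bm{\alpha},\bm{v})$, where $\bm{v}\sim N_n(\bm{0},\Omega_l)$ and $\bm{u}-\bm{\alpha}\mid\bm{v}\sim N_n(\eta\bm{v},\omega^2\Omega)$, the marginal of $\bm{u}-\bm{\alpha}$ is $N_n(\bm{0},\bm{A}_l)$ with $\bm{A}_l=\omega^2\Omega+\eta^2\Omega_l$, and the conditional of $\bm{v}$ given $\bm{u}-\bm{\alpha}$ is $N_n(\bm{c}_l,\bm{B}_l)$ with $\bm{c}_l=\eta\Omega_l\bm{A}_l^{-1}(\bm{u}-\bm{\alpha})$ and $\bm{B}_l=\Omega_l-\eta^2\Omega_l\bm{A}_l^{-1}\Omega_l$, exactly as in the statement. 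Thus the integrand factorises as $\phi_n(\bm{u}-\bm{\alpha};\bm{A}_l)\,\phi_n(\bm{v};\bm{c}_l,\bm{B}_l)$, and integration over $\bm{v}\ge\bm{0}$ produces $\phi_n(\bm{u}-\bm{\alpha};\bm{A}_l)\,\Phi_n(\bm{c}_l;\bm{0},\bm{B}_l)$ after using the symmetry identity $P(\bm{V}\ge\bm{0})=\Phi_n(\bm{c}_l;\bm{0},\bm{B}_l)$ when $\bm{V}\sim N_n(\bm{c}_l,\bm{B}_l)$.

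Finally, I would reduce the $2^n$ orthant contributions to $2^{n-1}$ by pairing each signature $\bm{S}$ with its negative $-\bm{S}$. Because $(-\bm{S})\Omega(-\bm{S})=\bm{S}\Omega\bm{S}$, the triple $(\bm{A}_l,\bm{c}_l,\bm{B}_l)$ coincides for paired orthants, and combining these equal contributions produces the leading factor $2$ in formula (\ref{genfdskew}). The main technical points to get right are the change of variables inside each orthant (keeping track of how the signature matrix turns $\Omega$ into $\Omega_l$) and the conditional Gaussian calculation that identifies $\bm{A}_l,\bm{c}_l,\bm{B}_l$; everything else is bookkeeping. The symmetry-based reduction from $2^n$ to $2^{n-1}$ is where one must be careful that the integrand, not just $\Omega_l$, is invariant under $\bm{S}\mapsto-\bm{S}$, which it is because $\bm{c}_l$ and $\bm{B}_l$ depend on $\bm{S}$ only through $\Omega_l$.
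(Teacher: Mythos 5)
Your proposal is correct and follows essentially the same route as the paper: conditioning on the latent Gaussian vector, reducing the absolute value to a sum over sign patterns with $\Omega_l=\bm{D}(\bm{l})\Omega\bm{D}(\bm{l})$, applying the standard Gaussian marginal--conditional factorisation to identify $\bm{A}_l,\bm{c}_l,\bm{B}_l$, and pairing $\bm{l}$ with $-\bm{l}$ to obtain the factor $2$ and the range $1,\ldots,2^{n-1}$. The only cosmetic difference is that the paper first derives the folded-normal density $f_{\bm{V}}$ by differentiating its CDF, whereas you obtain the same sum by splitting the integral over orthants and changing variables, which is an equivalent computation.
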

Some comments are in order. First, note that $f_{\bm{U}}$ can be viewed as a generalization of the multivariate skew-Gaussian distribution proposed in
\cite{azz1996}. Second,
using Theorem (\ref{theo1}), it can be easily shown that the  consistency conditions given in
 \cite{BM2018}  are satisfied.
Third, it is apparent that  likelihood-based methods for the skew-Gaussian process are impractical from computational point of view even for a relatively small dataset.

To obtain a process with skew-$t$ marginal distributions \citep{Azzalin:Capitanio:2014},
we replace the process $G$ in (\ref{kkkkk2})  with the process $U_{\eta}$.
Specifically, we consider a process
$S_{\nu,\eta}=\{S_{\nu,\eta}(\bm{s}), \bm{s} \in A\}$  defined  as
\begin{equation}\label{kkkkkskew}
S_{\nu,\eta}(\bm{s}):=\mu(\bm{s})+\sigma
W_{\nu}(\bm{s})^{-\frac{1}{2}}U_{\eta}(\bm{s})
\end{equation}
where
$W_{\nu}$ and $U_{\eta}$ are supposed to be independent.
In (\ref{repskew})  we assume $g(\bm{s})=0$ and
$\eta^2+\omega^2=1$. The pdf of  the  marginal distribution of $S^*_{\nu,\eta}$
is given by: 
\begin{equation}\label{fduskewt}
f_{S^*_{\nu,\eta}(\bm{s})}(g)=2f_{Y^{*}_{\nu}(\bm{s})}(g;\nu)F_{Y^{*}_{\nu}(\bm{s})}\left(\eta
g\sqrt{\frac{\nu+1}{\nu+g^2}};\nu+1\right)
\end{equation}
with
$\E(S^*_{\nu,\eta}(\bm{s}))=\frac{\sqrt{\nu}\Gamma\left(\frac{\nu-1}{2}\right)\eta}{\sqrt{\pi}\Gamma\left(\frac{\nu}{2}\right)}$,
and
$Var(S^*_{\nu,\eta}(\bm{s}))=\left[\frac{\nu}{\nu-2}-\frac{\nu\Gamma^2\left(\frac{\nu-1}{2}\right)\eta^2}
{\pi\Gamma^2\left(\frac{\nu}{2}\right)}\right]$.

 If
$\eta=0$, (\ref{fduskewt}) reduces to a marginal  $t$ density
given in (\ref{ut}) and  if $\nu\to \infty$, (\ref{fduskewt})
converges to a skew-normal distribution. Moreover,
coupling (\ref{CC})   and (\ref{covskew})   the correlation function of the skew-$t$
process is given by:
\begin{footnotesize}
\begin{align}\label{CC99}
\rho_{S^*_{\nu,\eta}}(\bm{h})&=a(\nu,\eta)\left[{}_2F_1\left(\frac{1}{2},\frac{1}{2};\frac{\nu}{2};\rho^2(\bm{h})\right)
\left\{(1+\eta^2(1-\frac{2}{\pi}))\rho_{U_{\eta}}(\bm{h})+\frac{2\eta^2}{\pi}\right\}-\frac{2\eta^2}{\pi}\right],
\end{align}
\end{footnotesize}
where $a(\nu,\eta)=\frac{\pi(\nu-2)\Gamma^2\left(\frac{\nu-1}{2}\right)}{2\left[\pi\Gamma^2\left(\frac{\nu}{2}\right)(1+\eta^2)
-\eta^2(\nu-2)\Gamma^2\left(\frac{\nu-1}{2}\right)\right]}$. Note that $\rho_{S^*_{\nu,\eta}}(\bm{h})=\rho_{S^*_{\nu,-\eta}}(\bm{h})$ that is, as  in the skew-Gaussian process $U_{\eta}$,   the correlation is invariant with respect to positive or
negative asymmetry  and using similar arguments of  Theorem \ref{theoiii} point e), it   can be shown that $\lim\limits_{\nu \to
\infty } \rho_{S^{*}_{\nu,\eta}}(\bm{h})=\rho_{U_{\eta}}(\bm{h})$.

Finally, following the steps of the proof of    Theorem \ref{theoiii},
it can be shown that properties  $a)$, $b)$, $c)$ and $d)$  in  Theorem \ref{theoiii} are true for the skew-t process $S^*_{\nu,\eta}$.

Figure  \ref{fig:fcovt222}, left part,
compares $\rho_{S^*_{7,0.9}}(\bm{h})$  and $\rho_{S^*_{7,0}}(\bm{h})=\rho_{Y^*_{7}}(\bm{h})$
with  the underlying correlation $\rho(\bm{h})={\cal  GW}_{0.3,1,5}(\bm{h})$.
The right part  shows a realization of $S^*_{7,0.9}$.

\begin{figure}[h!]
\begin{tabular}{cc}
   \includegraphics[width=7.1cm, height=6.8cm]{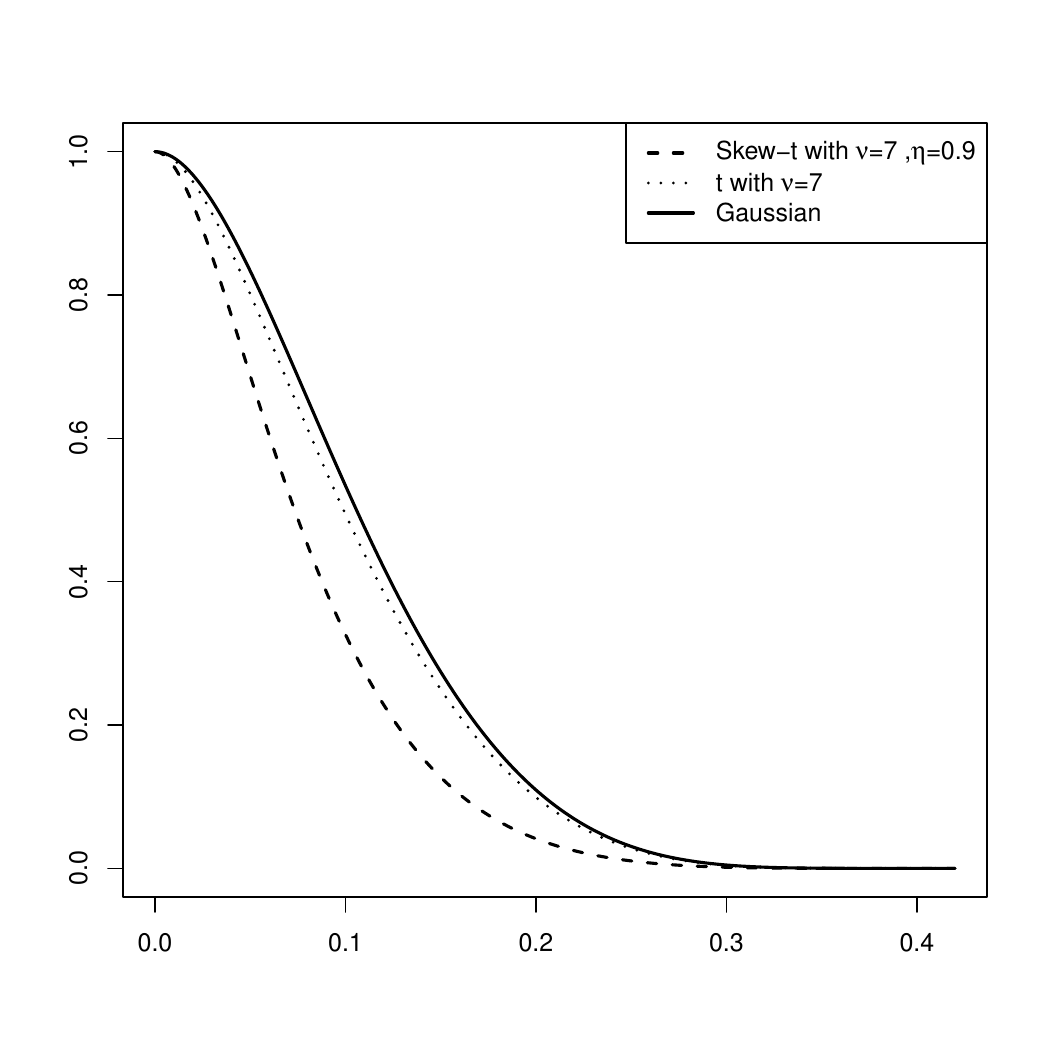}& \includegraphics[width=7.1cm, height=6.8cm]{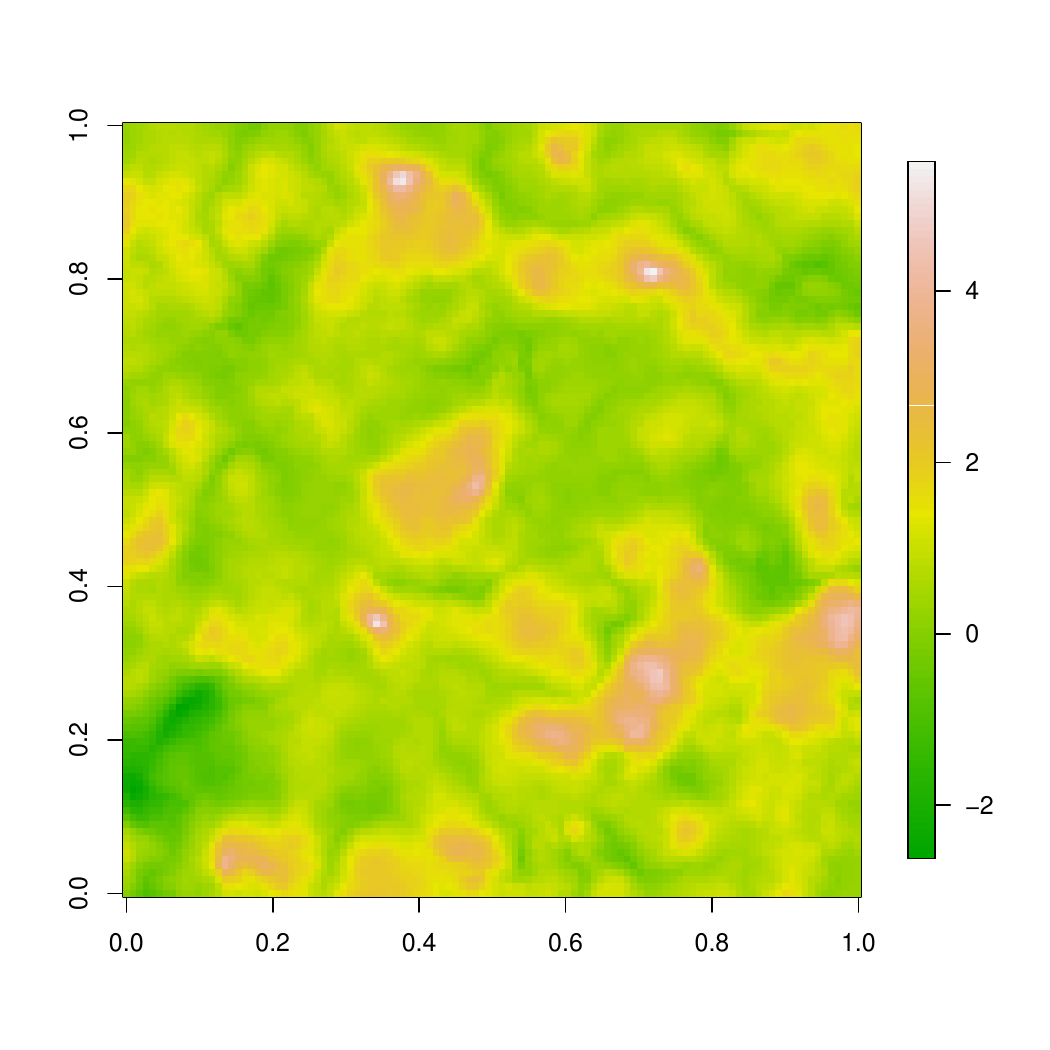}   \\
\end{tabular}
\caption{From left to right:
a) comparison between $\rho_{S^*_{7,0.9}}(\bm{h})$, $\rho_{S^*_{7,0}}(\bm{h})=\rho_{Y^*_{7}}(\bm{h})$
and the underlying correlation $\rho(\bm{h})={\cal  GW}_{0.3,1,5}(\bm{h})$;
b)
 a  realization from   $S^*_{7,0.9}(\bm{s})$. \label{fig:fcovt222}}
\end{figure}

\section{Numerical examples}\label{sec:4}

In this Section
we analyze the performance of the $wpl$ method   when estimating the
$t$ process  assuming   $\nu$ known or unknown. Following Remark 1  in Section \ref{sec:2}, we consider the cases when $\nu>2$ and $\nu=3,4,\ldots$.
In the latter case, we give a practical solution
for fixing the degrees of freedom parameter to a positive integer value
through a two-step estimation.

We also compare the performance of the $wpl$ using the bivariate $t$ distribution  (\ref{pairt1}) with
a misspecified Gaussian
standard and weighted pairwise likelihood. 
Finally,  we compare
the performance of the optimal linear predictor of  the $t$ process  using (\ref{CC})
 versus the optimal predictor of  the Gaussian process.

\subsection{Weighted pairwise likelihood estimation}\label{opop}
Let   $(y_1,\ldots,y_n)^T$
 be a  realization of the $t$ random process $Y_{\nu}$ defined
in equation (\ref{kkkkk2}) observed
   at distinct  spatial locations $ \bm{s}_1,\ldots, \bm{s}_n$, $ \bm{s}_i  \in A$ and
  let $\bm{\theta}=(\bm{\beta}^T, \nu, \sigma^2, \bm{\alpha}^T)$ be the vector of unknown parameters where $\bm{\alpha}$ is the vector parameter associated with the   underlying correlation model.
The method of $wpl$, \citep{Lindsay:1988,Varin:Reid:Firth:2011}  combines  the bivariate distributions of  all possible distinct pairs of observations.
 The  pairwise likelihood function
 is given by
\begin{equation}
\label{pln}
\operatorname{pl}(\bm{\theta}):=\sum_{i=1}^{n-1} \sum_{j=i+1}^{n}\log (f_{\bm{Y}_{\nu;ij}}(y_{i},y_j;\bm{\theta})) {c_{ij}}
\end{equation}
where
$f_{\bm{Y}_{\nu;ij}}(y_{i},y_j;\bm{\theta})$ is the bivariate
density  in (\ref{azz})  and
$c_{ij}$ is a nonnegative suitable weight.  The choice of  cut-off weights, namely

\begin{equation} \label{Wei}
    c_{ij}=
    \begin{cases}1
    &\|\bm{s}_i-\bm{s}_j\| \le d_{ij}\\
    0 & \text{otherwise} \end{cases},
    \end{equation}
 for a positive value of $d_{ij}$, can be  motivated by its simplicity and by observing that the dependence  between observations that are distant  is weak.  Therefore, the use of all  pairs may  skew the information confined in pairs of near observations
\citep{Bevilacqua:Gaetan:2015,Joe:Lee:2009}.
The maximum $wpl$  estimator is given by
\begin{equation*}
\widehat{\bm{\theta}}:=\operatorname{argmax}_{\bm{\theta}}\, \operatorname{pl}(\bm{\theta})
\end{equation*}
and, arguing as in  \citet{Bevilacqua:Gaetan:Mateu:Porcu:2012} and \citet{Bevilacqua:Gaetan:2015},
under some mixing conditions of the $t$ process,
it can be shown that, under increasing domain asymptotics, $\widehat{\bm{\theta}}$ is consistent and asymptotically Gaussian
with  the asymptotic covariance matrix   given by $\mathcal{G}^{-1}_n(\bm{\theta})$
the inverse of the Godambe information
$\mathcal{G}_n(\bm{\theta}):=\mathcal{H}_n(\bm{\theta})\mathcal{J}_n(\bm{\theta})^{-1}\mathcal{H}_n(\bm{\theta}),
$
where
$\mathcal{H}_n(\bm{\theta}):=\E[-\nabla^2 \operatorname{pl}(\bm{\theta})]$ and $\mathcal{J}_n(\bm{\theta}):={\mbox{Var}}[\nabla \operatorname{pl}(\bm{\theta})]$.
Standard error estimation can be obtained considering the square root diagonal  elements of $\mathcal{G}^{-1}_n(\widehat{\bm{\theta}})$.
Moreover, model selection can be performed by considering  two information criterion, defined as
\begin{equation*}
\mbox{PLIC}:= - 2\operatorname{pl}(\hat{\bm{\theta}})  + 2\mathrm{tr}(
\mathcal{H}_n(\hat{\bm{\theta}})\mathcal{G}^{-1}_n(\hat{\bm{\theta}})), \quad \mbox{BLIC}:= - 2\operatorname{pl}(\hat{\bm{\theta}})  +log(n) \mathrm{tr}(
\mathcal{H}_n(\hat{\bm{\theta}})\mathcal{G}^{-1}_n(\hat{\bm{\theta}}))
\end{equation*}
which are composite likelihood version of
the Akaike information
criterion (AIC) and Bayesian information
criterion  (BIC)  respectively  \citep{Varin:Vidoni:2005,xao:2010}.
Note that, the computation of standard errors,  \mbox{PLIC} and  \mbox{BLIC} require  evaluation of the matrices $\mathcal{H}_n(\hat{\bm{\theta}})$ and $\mathcal{J}_n(\hat{\bm{\theta}})$. However, the
evaluation  of $\mathcal{J}_n(\hat{\bm{\theta}})$ is  computationally
unfeasible for large datasets and  in this case subsampling
techniques can be used in order to estimate $\mathcal{J}_n(\bm{\theta})$ as in \cite{Bevilacqua:Gaetan:Mateu:Porcu:2012} and \cite{Heagerty:Lele:1998}.
A straightforward  and  more robust alternative    is  parametric bootstrap estimation of $\mathcal{G}^{-1}_n(\bm{\theta})$
\citep{Bai:Kang:Song:2014}.
We adopt the second strategy in Section \ref{sec:5}.

\subsection{Performance of the weighted pairwise likelihood estimation}
Following \cite{Ciccio:Monti:2011} and \cite{AR22}
we consider  a reparametrization  for the $t$ process   by using the inverse of  degrees of freedom,
$\lambda=1/\nu$. 
In the standard i.i.d case this kind of parametrization has proven  effective for solving some problems associated with the singularity of the Fisher information matrix
associated to the original parametrization.
Here we consider two possible  scenarios i.e. a $t$ process observed on a  subset of $\R$ and $\R^2$. 

\begin{enumerate}

\item

We consider points  $ s_i  \in A=[0,1]$, $i=1,\ldots,N$ and an exponential correlation function for the `parent' Gaussian process.
Then, according to Remark 1 in section \ref{sec:2}, the $t$ process is well-defined  for $0<\lambda <1/2$
 and in this specific case all the parameters can be jointly estimated.
We simulate,  using Cholesky decomposition, $500$ realizations of a $t$ process
observed on a regular transect $s_{1}=0$, $s_2=0.002, \ldots ,s_{501}=1$.
We consider two mean regression  parameters,   that is,
$\mu(s_i )=\beta_0+\beta_1 u( s_i )$
with
$\beta_0=0.5$, $\beta_1=-0.25$ where $u( s_i )$
is a realization from a $U(0,1)$. Then we  set $\lambda= 1/\nu$, $\nu=3, 6, 9$   and $\sigma^2=1$.

As correlation model we consider $\rho(h)={\cal M}_{\alpha,0.5}(h)=e^{-|h|/\alpha}$
 with $\alpha=0.1/3$ and in the $wpl$
estimation we consider a cut-off weight
function with $d_{ij}=0.002$.  Table \ref{tab:tab44} shows the  bias and mean square error
(MSE) associated with $\lambda$, $\beta_0$, $\beta_1$, $\alpha$ and $\sigma^2$.


\item
We consider points  $ \bm{s}_i  \in A=[0,1]^2$, $i=1,\ldots,N$. Specifically,
we simulate,  using Cholesky decomposition, $500$ realizations of a $t$ process
observed at   $N=500$ spatial  location sites  uniformly distributed in the unit square.
Regression,  variance and (inverse of) degrees of freedom  parameters have been set as in the first scenario.
As an isotropic parametric
correlation model,  
$\rho(\bm{h})={\cal
GW}_{\alpha,0,4}(\bm{h})$ 
with $\alpha=0.2$ is considered. In the $wpl$ estimation we consider a
cut-off weight function with $d_{ij}=0.05$ and
for each simulation we estimate with $wpl$,
assuming the degrees of freedom are fixed and known.

We also  consider the  more realistic case
when the (inverse of) degrees of freedom are supposed to be unknown.
Recall that from Remark 1,  $\nu$ must be fixed to a positive integer  $\nu= 3,4, \ldots$ in order to guarantee the existence of the $t$ process.
A brute force approach considers different $wpl$ estimates using a fixed  $\lambda= 1/\nu$, $\nu=3, 4, \ldots$.
and then simply keeps the  estimate with the  best $PLIC$ or $BLIC$. 
We propose  a computationally easier approach by  considering a two-step method. In the first step,  we estimate all the parameters including $0<\lambda<1/2$
maximizing  the  $wpl$ function.
This  is possible since the bivariate $t$ distribution is well  defined for $0<\lambda <1/2$ (see Remark 4).
In the second step
$\nu$ is fixed equal to the rounded value of $1/\widehat{\lambda}_1$
where $\widehat{\lambda}_1$ is the estimation at first step.
(If at the  first step, the estimation of $1/\widehat{\lambda}_1$ is lower than $2.5$, then it is rounded to $3$).
 Table \ref{tab:tab411}  shows  the bias and MSE
associated with
$\beta_0$, $\beta_1$, $\alpha$ and $\sigma^2$
when estimating
with $wpl$, assuming (the inverse of) degrees of freedom    1) known and fixed, and
2) unknown and fixed using a two-step estimation
and Figure  \ref{tpsta} shows the boxplots of the  $wpl$ estimates for the case 1) and 2).

\end{enumerate}

As a  general comment, the distribution of the estimates are quite
symmetric, numerically stable and with very few outliers for the three scenarios.
In Scenario 1, the MSE of  $\lambda=1/\nu$ slightly decreases when increasing $\nu$.
Moreover, in Table \ref{tab:tab411}, 
 it can be appreciated that only the  estimation of  $\sigma^2$
is affected when considering  a two step estimation. Specifically, the MSE of $\sigma^2$
  slightly increases with respect  to the one-step estimation, $i.e.$, when the degrees of freedom are supposed to be known.



\begin{table}[!hbtp]
\begin{center}
\scalebox{0.65}{
  \begin{tabular}{|r|rr|rr|rr|}
  \hline \multicolumn{1}{|c|}{$\lambda$} & \multicolumn{2}{c|}{$1/3$} &
\multicolumn{2}{c|}{$1/6$} &\multicolumn{2}{c|}{$1/9$}\\
\hline
\cline{1-7}
  &  Bias & MSE& Bias & MSE & Bias & MSE \\
\hline

$\hat{\lambda}$&$-0.01022$&$0.00321$&$-0.01033$&$0.00215$&$-0.00924$&$0.00172$\\
$\hat{\beta}_0$&$-0.01466$&$0.06585$&$-0.00559$&$0.06154$&$0.00028$&$0.06532$\\
$\hat{\beta}_1$&$-0.00099$&$0.00062$&$-0.00232$&$0.00049$&$-0.00193$&$0.00049$\\
$\hat{\alpha}$&$-0.00213$&$0.0006$&$-0.00278$&$0.0007$&$-0.00189$&$0.0008$\\
$\hat{\sigma}^2$&$-0.01064$&$0.07493$&$-0.05502$&$0.06902$&$-0.02677$&$0.07052$\\
 \hline
\end{tabular}
}
\end{center}
\caption{
 Bias and MSE when estimating with $wpl$  the  $t$ process
with  $\lambda=1/\nu$, $\nu=3,6,9$  and exponential correlation function (Scenario 1).} \label{tab:tab44}
\end{table}

\begin{table}[!hbtp]
\begin{center}
\scalebox{0.65}{
  \begin{tabular}{|r|rr|rr|rr|rr|rr|rr|}
  \hline \multicolumn{1}{|c|}{$\lambda$} & \multicolumn{4}{c|}{$1/3$} &
\multicolumn{4}{c|}{$1/6$} &\multicolumn{4}{c|}{$1/9$}\\
\hline \multicolumn{1}{|c|}{} & \multicolumn{2}{c|}{1} &
\multicolumn{2}{c|}{2} &\multicolumn{2}{c|}{1} & \multicolumn{2}{c|}{2} & \multicolumn{2}{c|}{1}& \multicolumn{2}{c|}{2}\\
\cline{1-13}
  &  Bias & MSE& Bias & MSE & Bias & MSE & Bias & MSE& Bias & MSE& Bias & MSE\\
\hline
$\hat{\beta}_0$& $-0.00024$&$0.01261$  & $-0.00037$&$0.01271$ &$0.00712$ &$0.01139$&$0.00712$&$0.01135$&$0.00638$ &$0.01153$&$0.00646$&$0.0115$\\
$\hat{\beta}_1$&$-0.00702$&$0.00874$ & $-0.00721$&$0.00881$  & $0.00419$&$0.00697$&$0.00419$&$0.00696$&$-0.00089$ &$0.00685$&$-0.00098$&$0.00694$\\
$\hat{\alpha}$&$-0.00233$&$0.00058$& $-0.00352$&$0.00058$ & $-0.00115$ &$0.00059$&$-0.00143$&$0.00061$ &$-0.00425$&$0.00056$ &$-0.0048$&$0.00058$\\
$\hat{\sigma}^2$&$-0.00971$&$0.01341$&  $0.01768$&$0.01745$  & $-0.01056$&$0.01142$&$-0.00461$&$0.01694$&$-0.01049$&$0.01148$ &$-0.00015$&$0.01657$\\
 \hline
\end{tabular}
}
\end{center}
\caption{Bias and MSE when estimating with $wpl$  the  $t$ process
   when
the (inverse of) degrees of freedom ($\lambda=1/\nu$, $\nu=3,6,9$) are: 1) fixed and known,
2) unknown and fixed through a two-step estimation (Scenario 2).
}
\label{tab:tab411}
\end{table}


\begin{figure}[!hbtp]
\begin{tabular}{cccc}
\includegraphics[width=3.4cm, height=6.4cm]{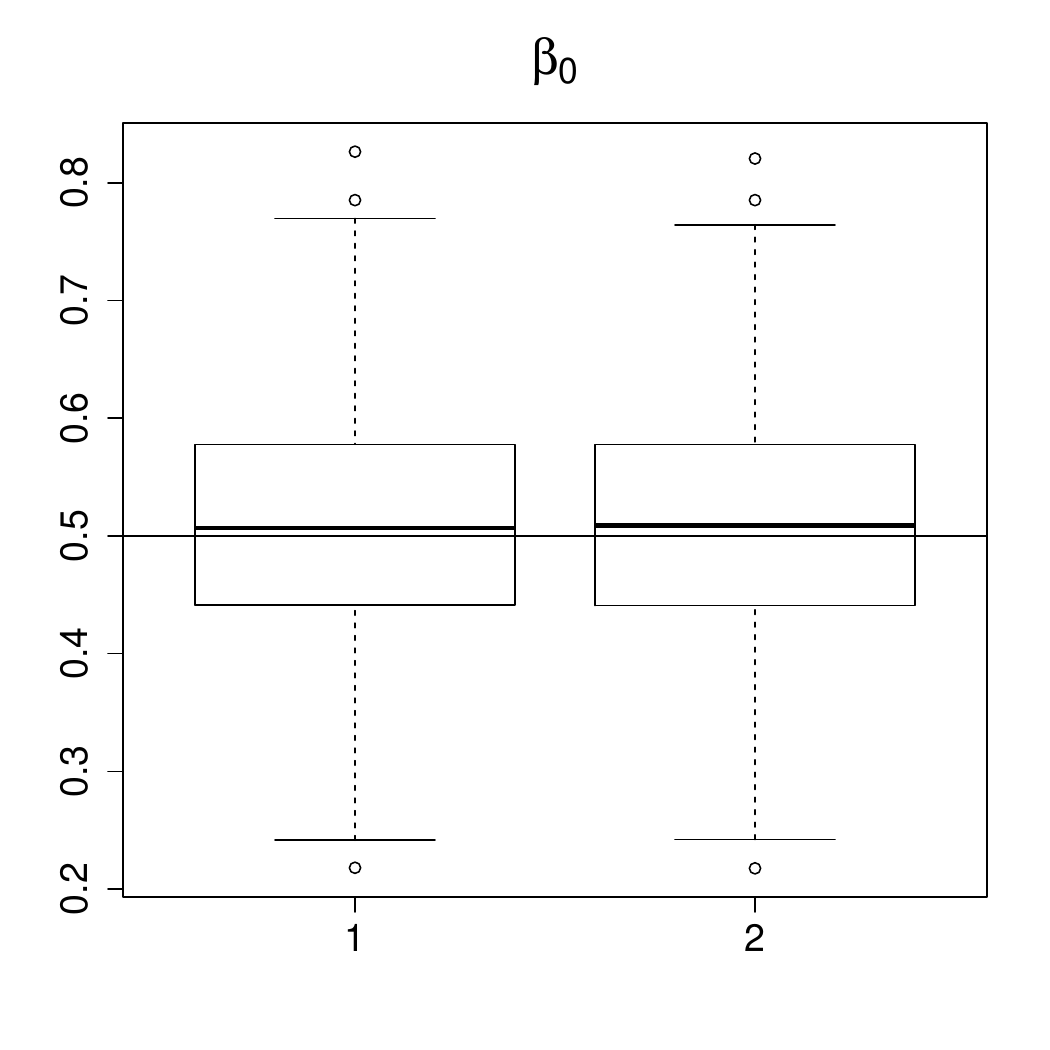} & \includegraphics[width=3.4cm, height=6.4cm]{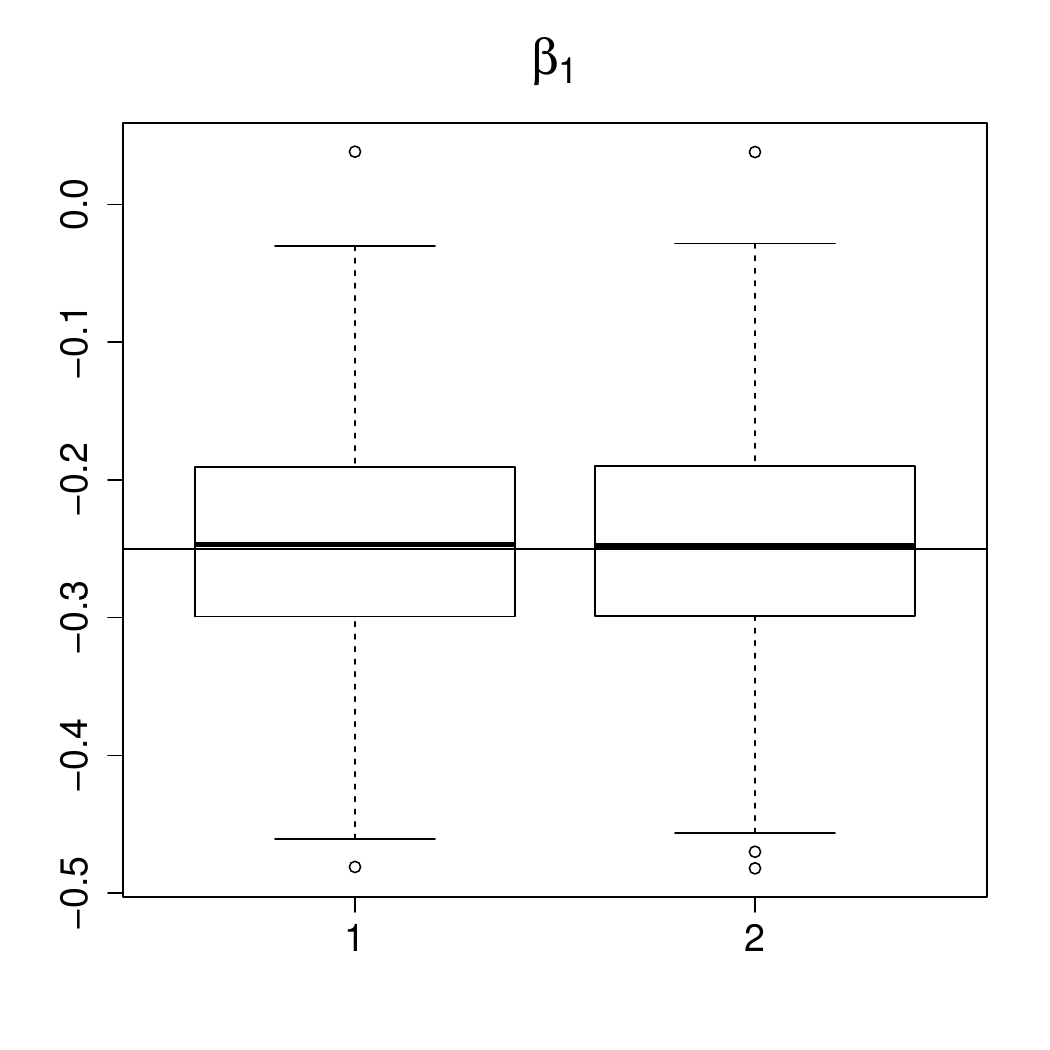} & \includegraphics[width=3.4cm, height=6.4cm]{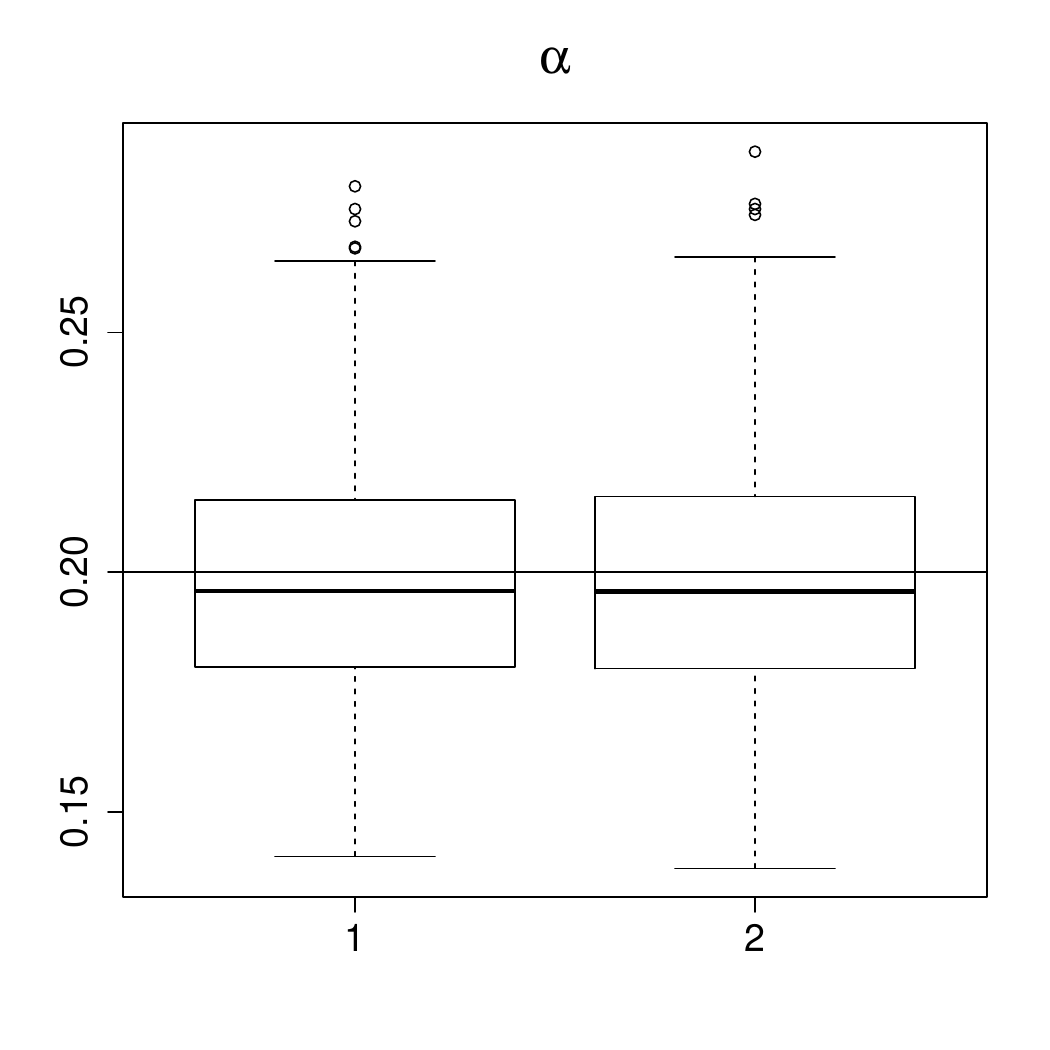}  & \includegraphics[width=3.4cm, height=6.4cm]{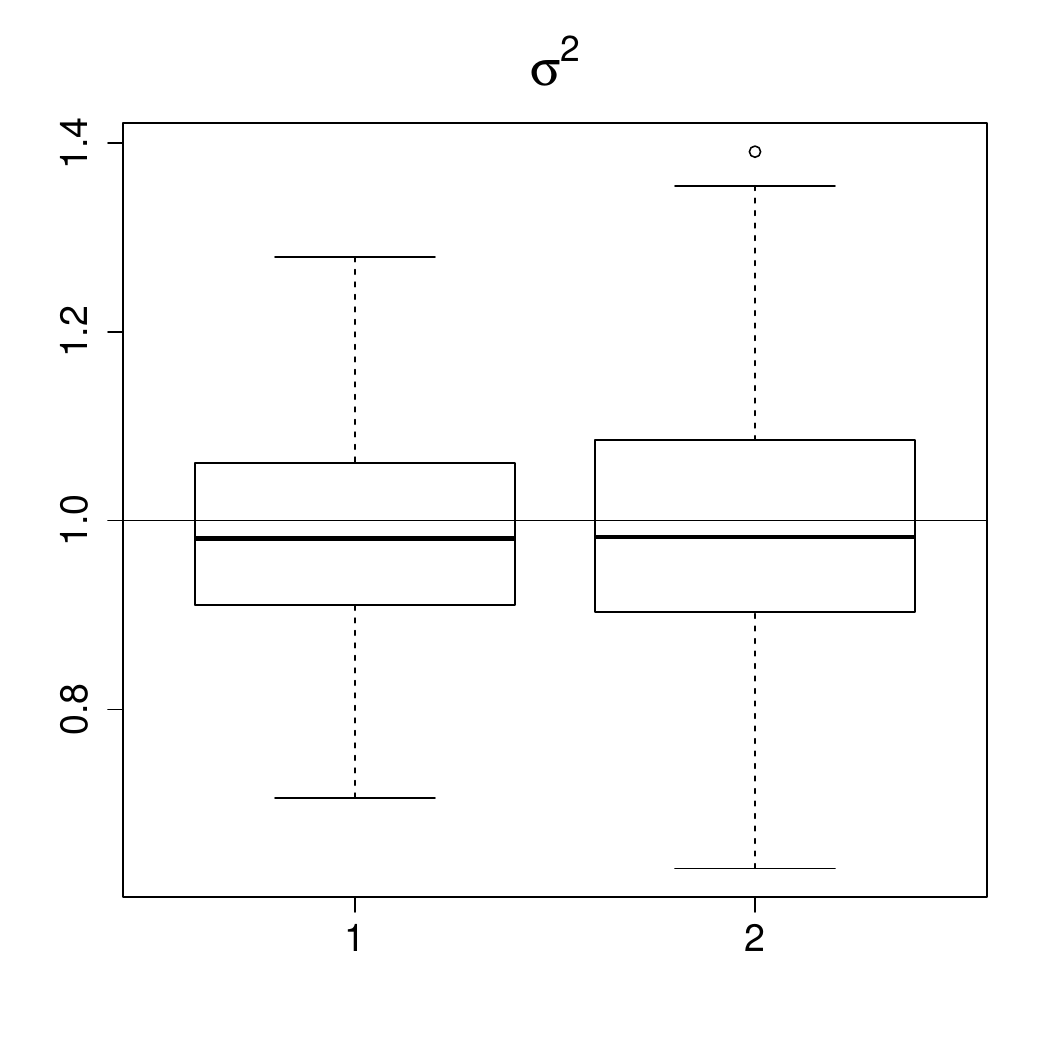}   \\
\end{tabular}
\caption{Boxplots of  $wpl$ estimates  for $\beta_0=0.5$, $\beta_1=-0.25$,
$\alpha=0.2$, $\sigma^2=1$ (from left to right) under Scenario 2
when estimating a $t$ process with $\lambda=1/\nu$, $\nu=6$  when
1) $\nu$ is assumed  known,
2)  $\nu$ is assumed  unknown and it is fixed  to a positive integer through a two-step estimation
.}\label{tpsta}
\end{figure}


\subsection{Performance of the misspecified  (pairwise)  Gaussian  likelihood estimation}\label{pol}
Weighted pairwise likelihood estimation
requires the evaluation of  the bivariate distribution (\ref{pairt1}) $i.e.$ the computation of the  Appell  $F_4$ function.
Standard statistical software libraries for the computation of the $F_4$ function  are unavailable  to the best of our knowledge.
In our implementation we exploit the following relation with the Gaussian hypergeometric function \citep{Brychkov:Saad:2017}:
\begin{footnotesize}
\begin{equation}\label{apell4}
F_4(a,b;c,c';w,z)=\sum\limits_{k=0}^{\infty}\frac{(a)_{k}(b)_{k}z^k}{k!(c')_k}{}_2F_1(a+k,b+k;c;w),\;\;\;\;|\sqrt{w}|+|\sqrt{z}|<1,
\end{equation}
\end{footnotesize}
truncating the series when the $k$-th generic  element of the series is smaller than a fixed $\epsilon$
and where standard libraries for the computation of the ${}_2F_1$ function can be used \citep{hyp}.
Evaluation  of the $F_4$ function can be time consuming  depending of the speed of convergence
of (\ref{apell4}) and, as  a consequence, if the number of location sites is large the computation of the $wpl$ estimator
 can be computationally demanding.

 An estimator   that require smaller computational burden  can be obtained
  by considering a misspecified $wpl$. Specifically,
 if  in the estimation procedure   we assume a Gaussian process with mean equal to $\mu(\bm{s})$,   variance equal to $\sigma^2\nu/(\nu-2)$ and correlation $\rho_{Y^*_{\nu}}(\bm{h})$,
 then a Gaussian $wpl$ only requires the computation of the Gaussian bivariate distribution
 and of the Gauss  hypergeometric function in (\ref{CC}).
 Note that the misspecified Gaussian process matches mean, variance and correlation function  of the $t$ process.
To avoid identifiability problems,
 we need a reparametrization of the variance $i.e$ $\sigma_*^2:=\sigma^2\nu/(\nu-2)$.
Then, maximization of the  Gaussian  $wpl$  function leads to the estimation of $\mu(\bm{s})$, $\sigma_*^2$,  $\nu$
and the parameters of the underlying correlation model $\rho(\bm{h})$.

To investigate the performance of this kind of estimator,
we consider $676$ points  on a  regular spatial grid   $ A=[0,1]^2$ that is
$(x_i,x_j)^T$ for $i,j=1,\ldots,21$
with
$x_1=0, x_2=0.04, \ldots x_{26}=1$
and
we simulate,  using Cholesky decomposition, $500$ realizations of a $t$ process
setting $\mu(\bm{s})=\mu=0$, $\sigma^2=1$,  $\nu=3, 6, 9$
and  underlying correlation function $\rho(\bm{h})={\cal
GW}_{\alpha,0,4}(\bm{h})$
with $\alpha=0.2$.
Then we estimate the parameters
$\mu$, $\sigma_*^2$, $\alpha$ (assuming $\nu$ known and fixed)
with  $wpl$ using  the  bivariate $t$ distribution (\ref{pairt1}) and with  both misspecified  Gaussian $wpl$  and standard likelihood.
 In the $wpl$ estimation we consider a
cut-off weight function with $d_{ij}=0.05$.

 Table \ref{tab:newtab}  shows  the bias and MSE
associated with
$\mu$, $\alpha$, and $\sigma^2$
for the three methods of estimation.
Note that, for comparison, the results of the variance parameter are reported in terms of the original parametrization. It is apparent that $wpl$ with bivariate $t$ distribution
show the best performance. In particular when $\lambda=1/3$ the gains in terms of efficiency are considerable. However,
 when increasing the degrees of freedom the gains tends to decrease and when $\nu=9$ the efficiencies of the three estimators are quite similar (see boxplots in
 Figure \ref{fig:FFr}).

\begin{table}[!hbtp]
\begin{center}
\scalebox{0.65}{
\begin{tabular}{cc|cc|cc|ll|}
\cline{3-8}
\multicolumn{1}{l}{}                                 & \multicolumn{1}{l|}{}             & \multicolumn{2}{c|}{$WPL_T$}         & \multicolumn{2}{c|}{$L_G$}           & \multicolumn{2}{c|}{$WPL_G$}                        \\ \hline
\multicolumn{1}{|c|}{$\lambda$}                      & \multicolumn{1}{l|}{$Parameters$} & \multicolumn{1}{c}{Bias} & MSE      & \multicolumn{1}{c}{Bias} & MSE      & \multicolumn{1}{c}{Bias} & \multicolumn{1}{c|}{MSE} \\ \hline
\multicolumn{1}{|c|}{\multirow{3}{*}{$\lambda=1/3$}} & $\mu$                             & $0.0045$& $0.0088$& $0.0036$& $0.0154$& $0.0036$& $0.0158$\\
\multicolumn{1}{|c|}{}                               & $\alpha$                          & $-0.0016$& $0.0003$ &$0.0084$& $0.0009$ &$0.0129$& $0.0013$ \\
\multicolumn{1}{|c|}{}                               & $\sigma^2$                        & $-0.0019$ &$0.0102$& $-0.0577$& $0.0477$ & $-0.0534$ &$0.0478$\\ \hline
\multicolumn{1}{|c|}{\multirow{3}{*}{$\lambda=1/6$}} & $\mu$                             & $-0.0057$& $0.0096$& $-0.0053$ &$0.0106$& $-0.0052$& $0.0110$\\
\multicolumn{1}{|c|}{}                               & $\alpha$                          & $-0.0020$ &$0.0003$ &$-0.0007$ &$0.0003$ &$-0.0014$ &$0.0003$\\
\multicolumn{1}{|c|}{}                               & $\sigma^2$                        & $-0.0105$& $0.0075$& $-0.0150$& $0.0095$& $-0.0168$& $0.0098$\\ \hline
\multicolumn{1}{|c|}{\multirow{3}{*}{$\lambda=1/9$}} & $\mu$                             & $-0.0029$& $0.0091$& $-0.0038$& $0.0094$& $-0.0040$& $0.0096$\\
\multicolumn{1}{|c|}{}                               & $\alpha$                          & $-0.0019$& $0.0003$& $-0.0019$& $0.0003$& $-0.0022$ &$0.0003$ \\
\multicolumn{1}{|c|}{}                               & $\sigma^2$                        & $-0.0075$& $0.0081$& $-0.0089$& $0.0088$ &$-0.0093$& $0.0088$\\ \hline
\end{tabular}
}
\end{center}
\caption{Bias and MSE associated with  $\mu$, $\alpha$ and $\sigma^2$ for  $wpl$ with bivariate $t$ distribution ($WPL_T$),
standard misspecified  Gaussian likelihood
($L_G$)
and
$wpl$ with bivariate misspecified  Gaussian distribution ($WPL_G$) when   $\lambda=1/\nu$, $\nu=3,6,9$.}
\label{tab:newtab}
\end{table}

\begin{figure}[h!]
\begin{tabular}{ccc}
\includegraphics[width=4.8cm, height=6.4cm]{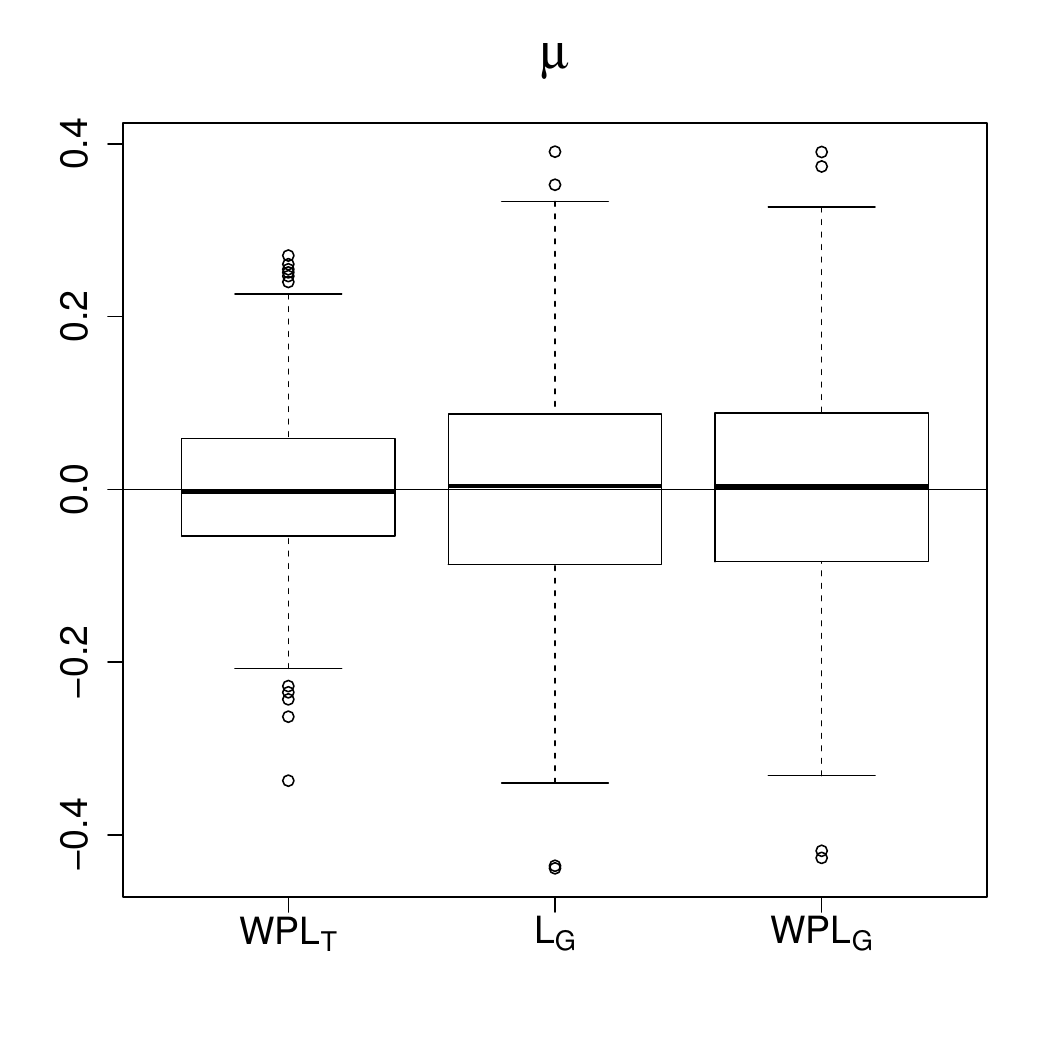} & \includegraphics[width=4.8cm, height=6.4cm]{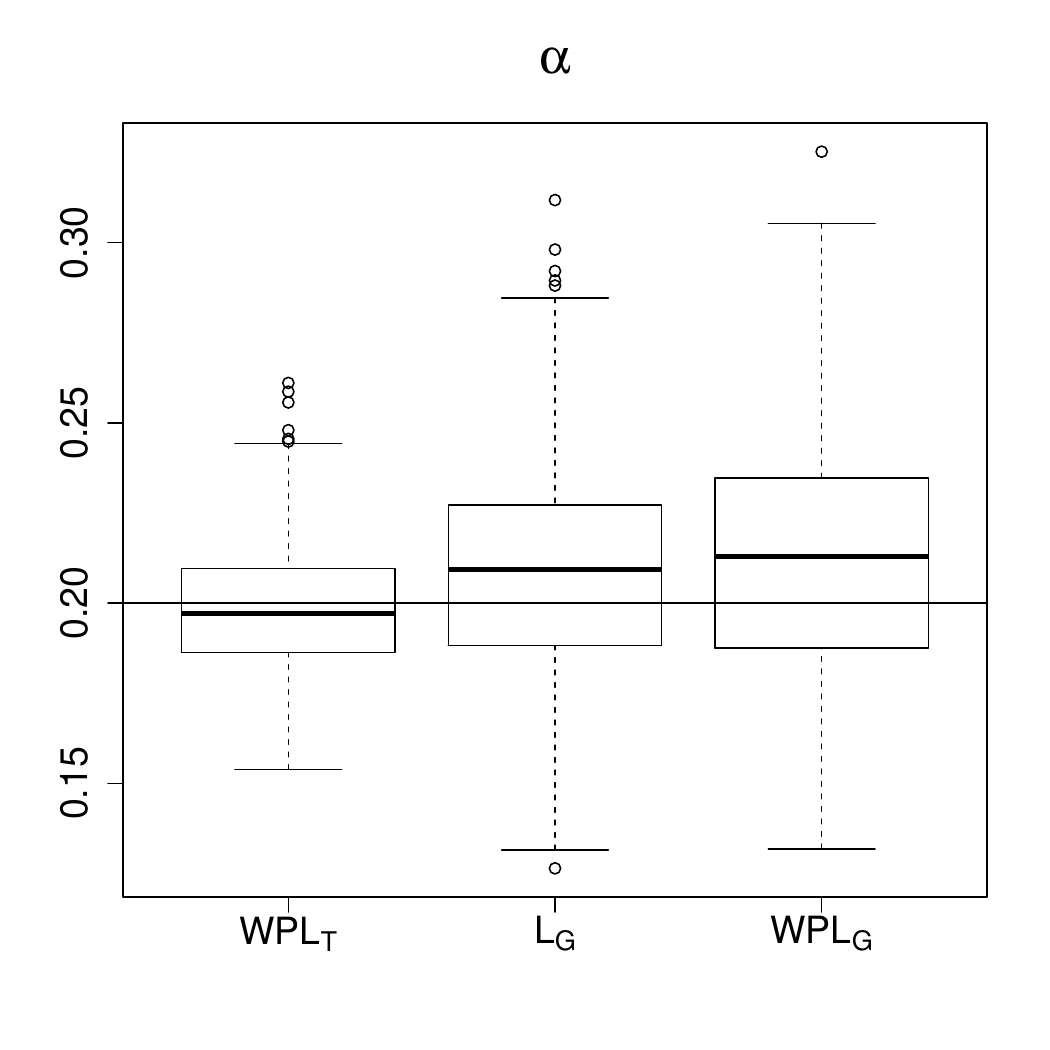} & \includegraphics[width=4.8cm, height=6.4cm]{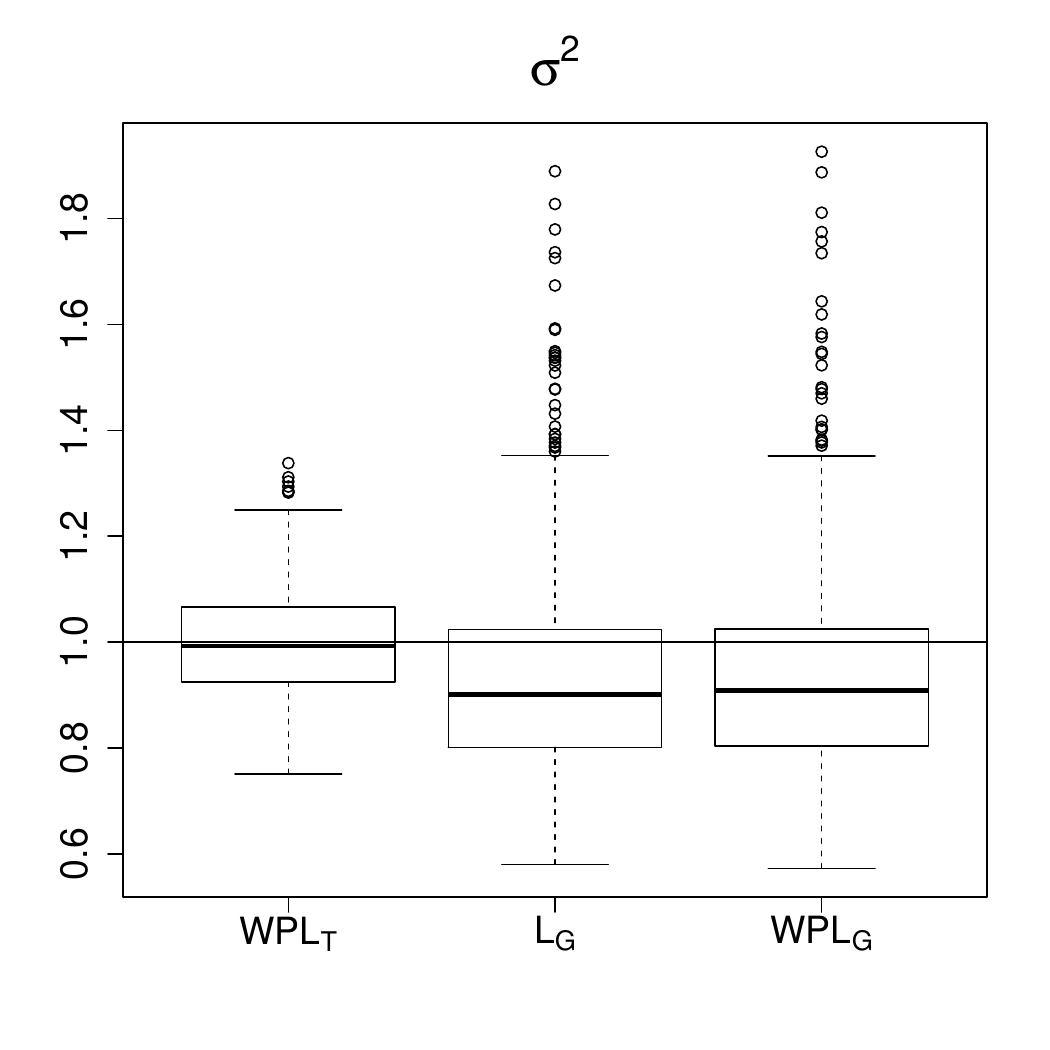}  \\
\includegraphics[width=4.8cm, height=6.4cm]{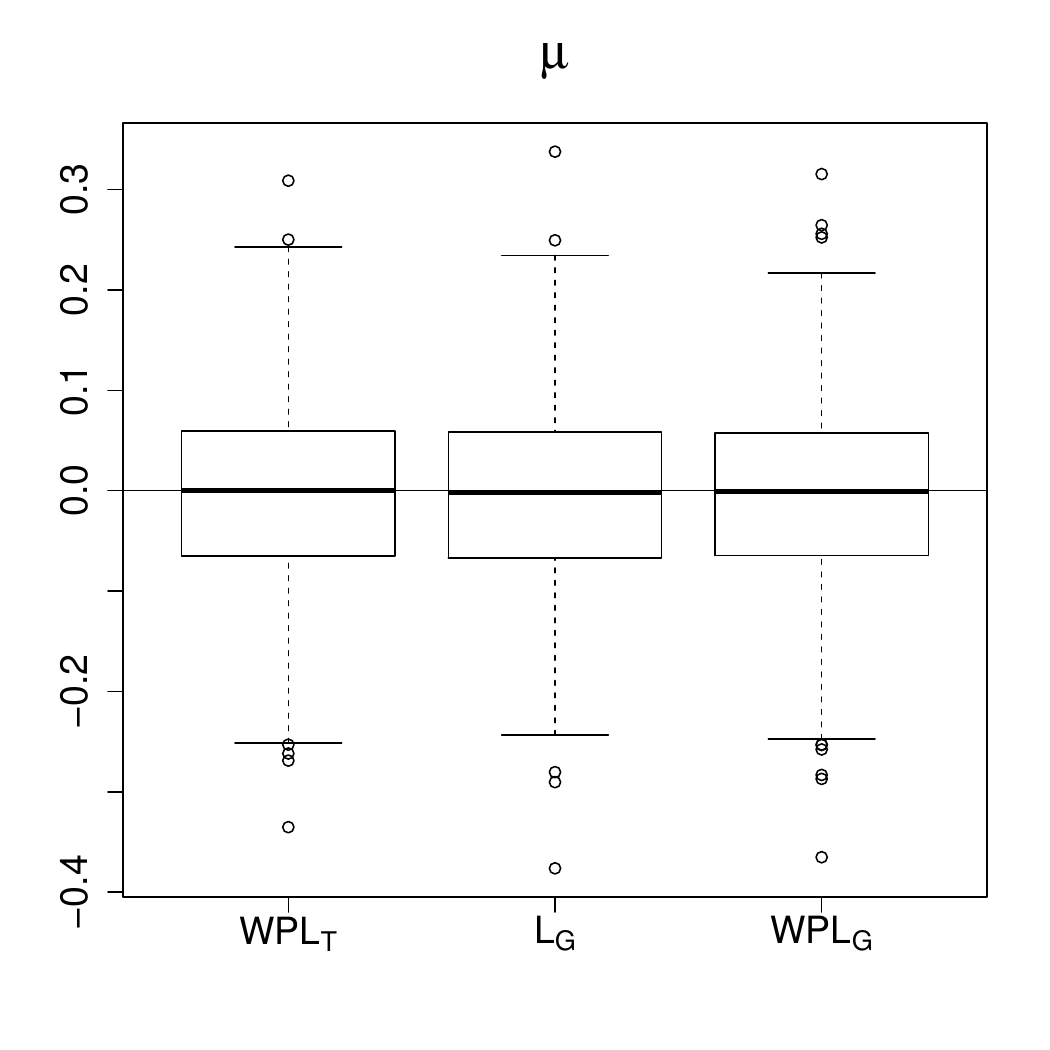} & \includegraphics[width=4.8cm, height=6.4cm]{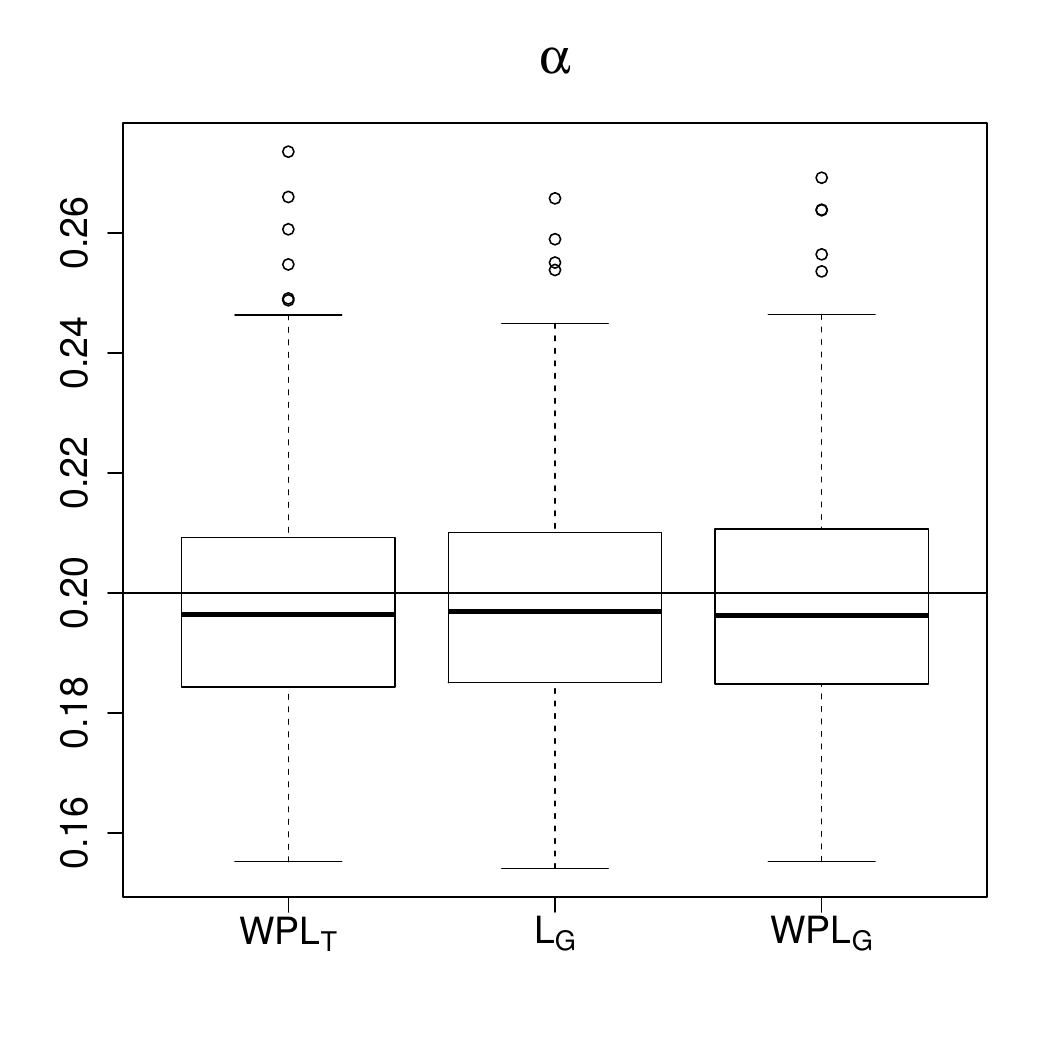}& \includegraphics[width=4.8cm, height=6.4cm]{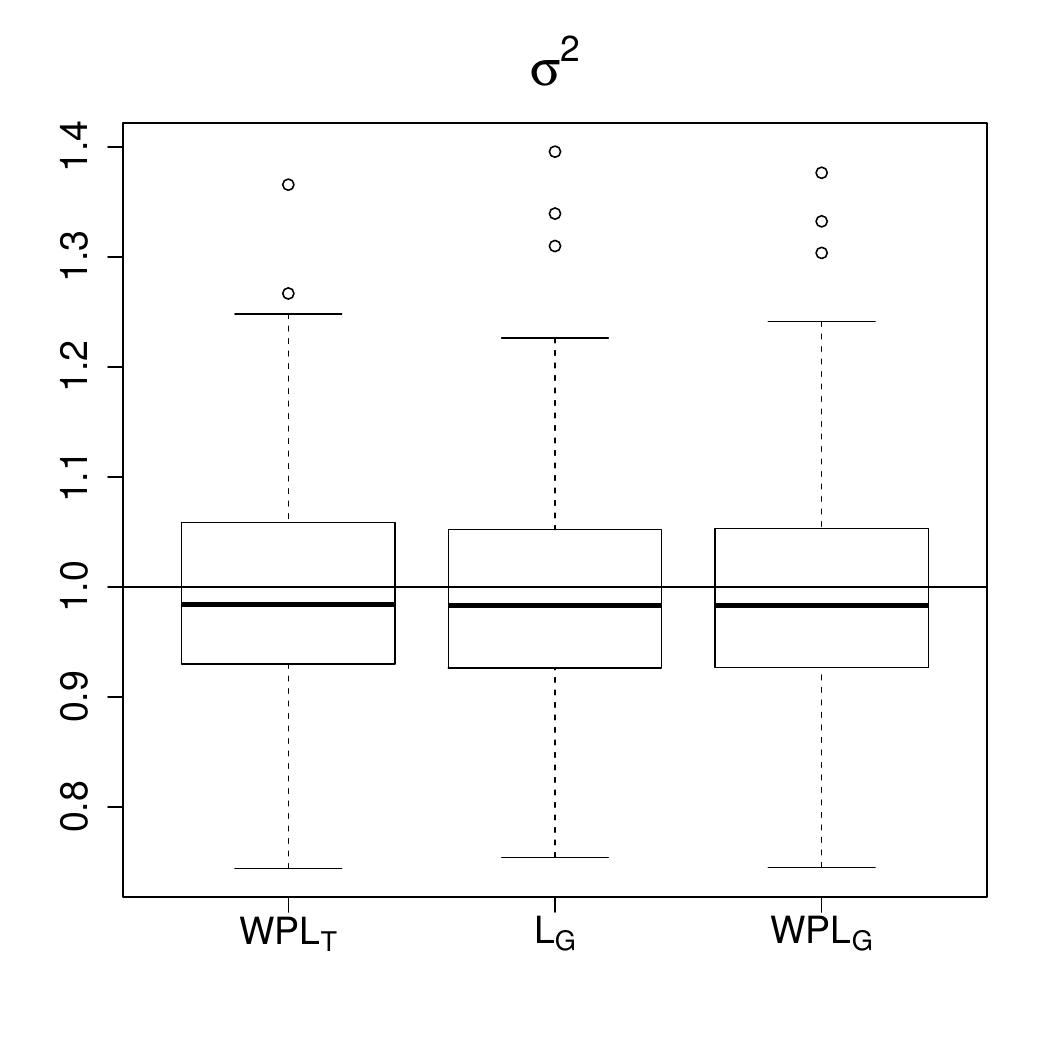}   \\
\end{tabular}
\caption{Upper part: boxplots of   $\mu$, $\alpha$, $\sigma^2$ for $wpl$ using $t$ bivariate distribution ($WPL_T$), standard misspecified Gaussian  likelihood ($L_G$)
and $wpl$ using Gaussian  misspecified bivariate  distribution ($WPL_G$) when $\nu=3$.  Bottom part: The same boxplots with $\nu=9$.  } \label{fig:FFr}
\end{figure}

\subsection{$t$ optimal linear prediction versus Gaussian optimal prediction}\label{piop}

One of the primary goals of geostatistical modeling is to make predictions at spatial
locations without observations.
The optimal predictor for  the $t$ process, with respect to the
mean squared error criterion, is nonlinear and difficult to
evaluate explicitly since it requires the knowledge of the finite
dimensional distribution.

A 
practical and less efficient 
solution can be
obtained using the optimal linear prediction. Assuming known mean, correlation and the degrees of freedom  of the $t$ process,
the predictor  at an unknown location $\bm{s}_0$ is given by:

\begin{equation}\label{pyt}
\widehat{y(\bm{s}_0)}=\mu(\bm{s}_0)+ \bm{c}^T_{\nu}R_{\nu}^{-1}(\bm{Y}-\bm{\mu})
\end{equation}
where $\bm{\mu}=(\mu(\bm{s}_1),\ldots,\mu(\bm{s}_n))^T$,  $\bm{c}_{\nu}=[\rho_{Y^*_{\nu}}(\bm{s}_0-\bm{s}_j)]_{i=1}^n$ and
$R_{\nu}=[\rho_{Y^*_{\nu}}(\bm{s}_i-\bm{s}_j)]_{i,j=1}^n$, and the  associated variance is given by:
\begin{equation}
Var(\widehat{y(\bm{s}_0)})=\sigma^2_{*} (1-\bm{c}^T_{\nu}R_{\nu}^{-1}\bm{c}_{\nu}).
\end{equation}

 As an Associate Editor pointed out,
 this is equivalent to perform optimal Gaussian  prediction with  covariance  function equal to $\sigma^2_{*} \rho_{Y^*_{\nu}}(\bm{h})$.
Similarly, using (\ref{CC99}), the optimal linear predictor of the skew-t process can be obtained.

We investigate   the performance of $(\ref{pyt})$ when compared
with the Gaussian optimal predictor, assuming  $\rho(\bm{h})={\cal  GW}_{0.2,\psi,4}(\bm{h})$, $\psi=0, 1,2$  as underlying  correlation function, using cross-validation.
With this goal in mind, we simulate   $1000$ realizations  from  a  $t$ process  $Y^*_{\nu}$ with $\nu=3, 7, 11$ and a Gaussian process
under the  settings of Section 4.3
and
for each realization, we consider  $80\%$ of the data for
prediction and leave $20\%$ as validation dataset.


For each model and for each realization, we
compute the root mean square errors (RMSEs)
 that  is:
\begin{footnotesize}
\begin{equation*}
RMSE_l=\left(\frac{1}{n_l}\sum_{i=1}^{n_l} \left(\widehat{y(\bm{s}_{i,l})}-y(\bm{s}_{i,l}))\right)^2\right)^{\frac{1}{2}},
\end{equation*}
\end{footnotesize}
where $y(\bm{s}_{i,l})$, $i=1,\ldots,n_l$ are the observation in the $l$-th validation set  and $n_l$ is the associated cardinality ($n_l=135$ in our example).

Finally we compute   the empirical mean of the $500$
RMSEs 
when the prediction is performed with the optimal linear predictor    (\ref{pyt})  $i.e.$ using $\rho_{Y^*_{\nu}}(\bm{h})$ with  $\nu=3, 7, 11$
and the optimal Gaussian predictor  with $\rho(\bm{h})$. Note that, from Theorem 2.2 (e), the prediction using $\rho(\bm{h})$  can be viewed as the
prediction using $\rho_{Y^*_{\nu}}(\bm{h})$ when  $\nu \to \infty$.

In Table \ref{tab:tabpred} we report the simulation results in terms of  relative  efficiency
 that is, for a given process and a given $\psi=0, 1, 2$,
  the ratio between
the mean RMSE  of the best predictor and the mean   RMSE  associated with a competitive predictor. This implies that relative efficiency prediction  is lower than 1 and it is equal to $1$ in the best case.
From  Table \ref{tab:tabpred}, it can be appreciated
that under the $t$ process $Y^*_{\nu}$,
 the prediction with $\rho_{Y^*_{\nu}}(\bm{h})$, for $\nu=3,7,11$,
performs  overall better than the optimal Gaussian prediction using $\rho(\bm{h})$.
 As expected, the gain is more apparent when decreasing the degrees of freedom and increasing $\psi$.
 For instance,  if $\nu=3$ and $\psi=2$ the loss of   efficiency 
 predicting with  $\rho(\bm{h})$ 
 is $19\%$ approximatively.
 It can also be noted that if $Y^*_{7}$, $Y^*_{11}$  or Gaussian are one or two mean squared differentiable ($\psi=1,2$), then   the  prediction using  $\rho_{Y^*_{3}}(\bm{h})$ can be very  inefficient.
 This is not surprising since from Theorem 2.2 (c), $Y^*_{3}$ is not mean square differentiable.
 Resuming, this numerical experiment study suggests  that
when predicting  data exhibiting   heavy tails
the use of the correlation function  $\rho_{Y^*_{\nu}}(\bm{h})$ should be preferred
to the use of 
$\rho(\bm{h})$.

\begin{table}[!hbtp]
\begin{center}
\scalebox{0.75}{
\begin{tabular}{cc|c|c|c|c|}
 \hline
 \multicolumn{1}{|c|}{${\cal  GW}_{0.2,\psi,4}(\bm{h})$}                                     &  Simulation from       &$Y^*_{3}$    & $Y^*_{7}$   & $Y^*_{11}$& Gaussian \\ \hline
 \multicolumn{1}{|c|}{}         & Prediction with            &   &    &  &  \\ \hline
\multicolumn{1}{|c|}{\multirow{4}{*}{$\psi=0$}} & $\rho_{Y^*_{3}}(\bm{h})$    & $1$ & $0.9896$ & $0.9870$ & $  0.9838 $ \\ \cline{2-6}
\multicolumn{1}{|c|}{}                                   & $\rho_{Y^*_{7}}(\bm{h})$    & $0.9904$ & $1$ & $0.9998$ & $0.9987$ \\ \cline{2-6}
\multicolumn{1}{|c|}{}                                   & $\rho_{Y^*_{11}}(\bm{h})$ & $0.9877$ & $0.9998$ & 1 & $0.9996$ \\ \cline{2-6}
\multicolumn{1}{|c|}{}                                   & $\rho(\bm{h})$& $0.9836$ & $0.9988$ & $0.9996$ & $1$ \\ \hline
\multicolumn{1}{|c|}{\multirow{4}{*}{$\psi=1$}} & $\rho_{Y^*_{3}(\bm{h})}$    & $1$ & $0.9399$ & $0.9180$ & $0.8789$ \\ \cline{2-6}
\multicolumn{1}{|c|}{}                                   & $\rho_{Y^*_{7}}(\bm{h})$    & $0.9503$ & $1$ & $0.9984$ & $0.9895$ \\ \cline{2-6}
\multicolumn{1}{|c|}{}                                   & $\rho_{Y^*_{11}}(\bm{h})$ & $0.9327$ & $0.9984$ & $1$ & $0.9966$ \\ \cline{2-6}
\multicolumn{1}{|c|}{}                                   & $\rho(\bm{h}) $& $0.9095$ & $0.9947$ & $0.9969$ & $1$ \\ \hline
\multicolumn{1}{|c|}{\multirow{4}{*}{$\psi=2$}} & $\rho_{Y^*_{3}}(\bm{h})$    & $1$ & $0.8827$ & $0.8263$ & $0.7109$ \\ \cline{2-6}
\multicolumn{1}{|c|}{}                                   & $\rho_{Y^*_{7}}(\bm{h})$    & $0.9223$ & $1$ & $0.9936$ & $0.9511$ \\ \cline{2-6}
\multicolumn{1}{|c|}{}                                   & $\rho_{Y^*_{11}}(\bm{h})$ & $0.8834$ & $0.9947$ & $1$ & $0.9817$ \\ \cline{2-6}
\multicolumn{1}{|c|}{}                                   & $\rho(\bm{h})$ & $0.8169$ & $0.9643$ & $0.9848$ & $1$ \\ \hline
\end{tabular}
}
\end{center}

\caption{Relative mean RMSEs  prediction efficiency 
over 500 runs
for a $t$ process  with  $\nu=3, 7, 11$ and a Gaussian process
when predicting using $\rho_{Y^*_{\nu}}(\bm{h})$ for  $\nu=3, 7, 11$  and  $\rho(\bm{h})$.
The underlying correlation  model  is  $\rho(\bm{h})={\cal  GW}_{0.2,\psi,4}(\bm{h})$
with $\psi=0,1,2.$
} \label{tab:tabpred}
\end{table}

\section{Application to Maximum Temperature Data}\label{sec:5}

In this section, we apply the proposed $t$ process to  a data set of maximum temperature data observed in Australia.
Specifically, we consider a subset of a global  data set  of merged maximum daily temperature measurements from the Global Surface Summary of Day data (GSOD)  with European Climate Assessment  \&Dataset (ECA\&D) data in July 2011.
 The dataset  is described in detail in \cite{qqq} and it is available in  the R package \texttt{meteo}. The subset we consider is depicted in Figure
 \ref{australia} (a) and
    consists of the  maximum temperature
observed on July 5 in  $446$ location sites, $y(\bm{s}_i)$, $i=1,\ldots,446$,
in the region
with longitude
$[110,154]$  and latitude $[-39,-12]$.

\begin{figure}[h!]
\begin{tabular}{cc}
  \includegraphics[width=7.3cm, height=6.8cm]{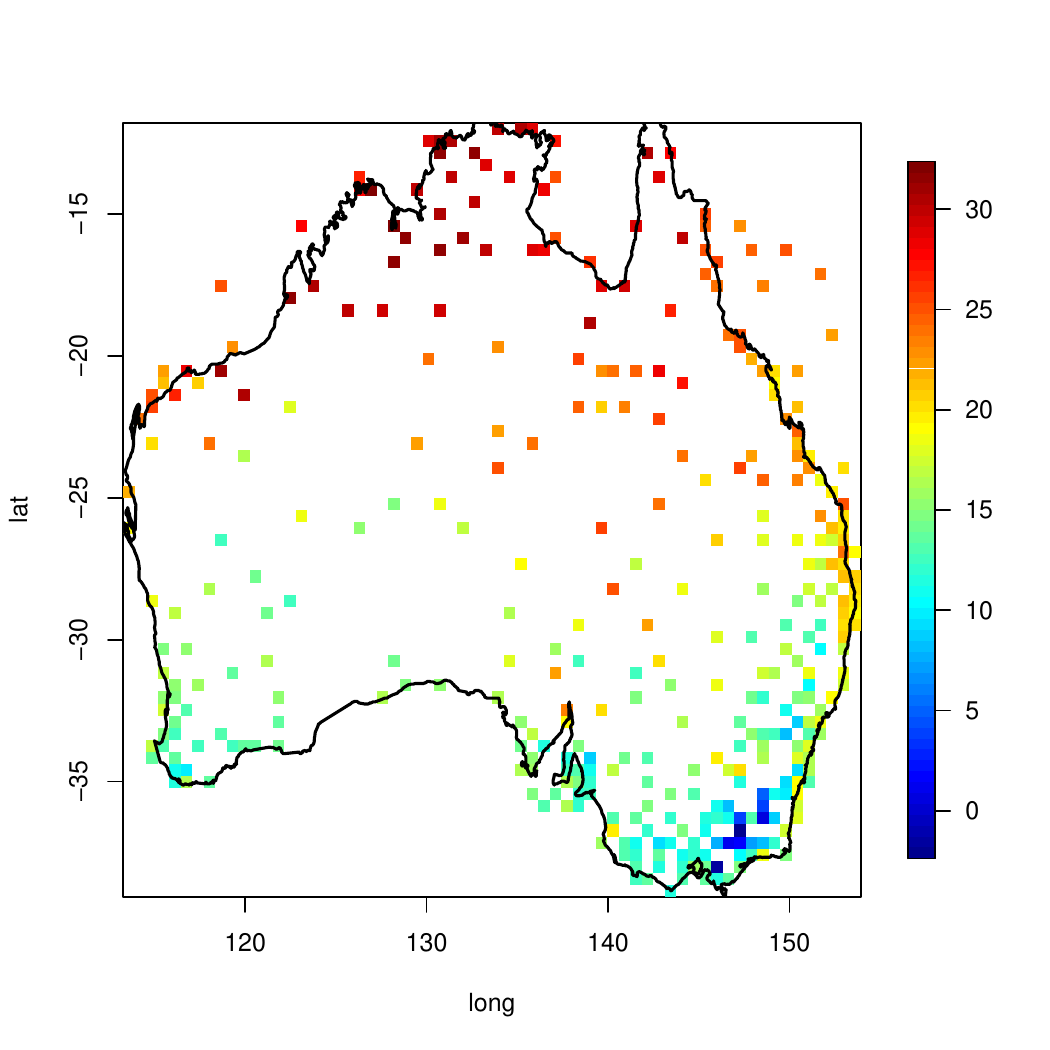}& \includegraphics[width=7.3cm, height=6.8cm]{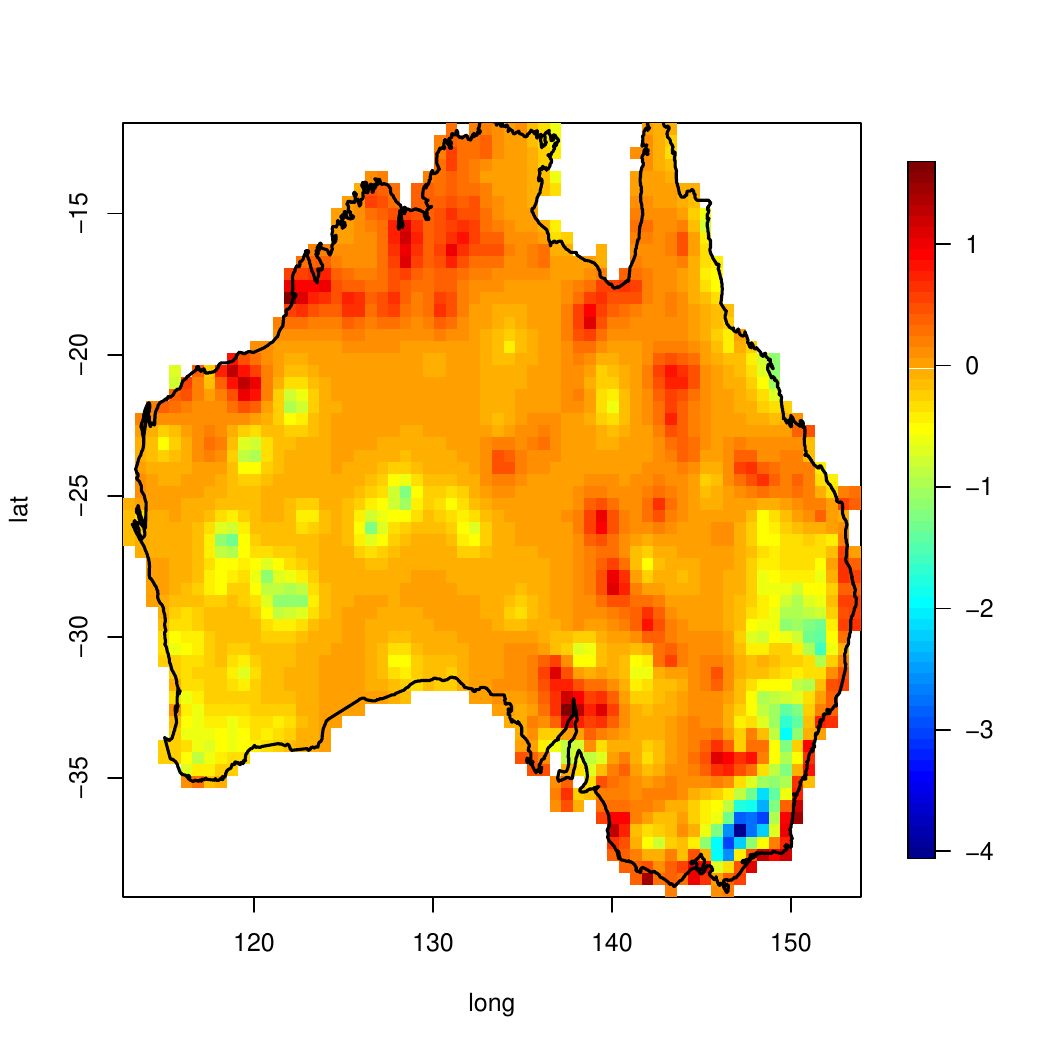}\\
  (a)&(b)
  \end{tabular}
  \caption{From left to right: a) spatial locations of   maximum temperature in Australia  in July 2011
  and b) prediction of residuals of the estimated $t$ process.}\label{australia}
\end{figure}

Spatial coordinates are given in  longitude and  latitude    expressed as decimal degrees and we consider the proposed $t$ process defined on the  planet Earth sphere approximation
 $\mathbb{S}^2=\{    \bm{s}\in \R^{3}: ||\bm{s}||=6371 \}$.
The first process  we use to model this dataset is a $t$ process:
\begin{equation}\label{kkkkk28}
Y_{\nu}(\bm{s})=\beta_0+\beta_1X(\bm{s})+\sigma Y^*_{\nu}(\bm{s}), \;\;\;  \bm{s} \in \mathbb{S}^2
\end{equation}
where $Y^*_{\nu}$ is a standard  $t$ process. 
Here,
$X(\bm{s})$
is a covariate  called \emph{geometric temperature}  which represents
the geometric position of a particular location on Earth and the  day of the year
 \citep{qqq}.
As a  comparison, we also consider a  Gaussian process:
\begin{equation}\label{kkkkk29}
Y(\bm{s})=\beta_0+\beta_1X(\bm{s})+\sigma G(\bm{s}),  \quad  \bm{s} \in \mathbb{S}^2
\end{equation}
where $G$ is a standard Gaussian process.
 We assume that the underlying geodesically isotropic  correlation function \citep{gneiting2013,porcubev}
is of the Mat{\'e}rn and  generalized Wendland type.
A preliminary estimation of the $t$  and Gaussian processes,
 including the smoothness parameters, highlights a
multimodality of the (pairwise) likelihood surface, for both correlation models
and a not mean-square differentiability of the  process.
 For this reason, we fix the smoothness parameters and we consider the underlying correlation models
${\cal  M}_{\alpha,0.5}(d_{GC})=e^{-d_{GC}/\alpha} $ and ${\cal  GW}_{\alpha,0,5}(d_{GC})=(1-d_{GC}/\alpha)_+^5$
where, given  two spherical points
$\bm{s}_i=(\mbox{lon}_i,\mbox{lat}_i)$    and $\bm{s}_j=(\mbox{lon}_j,\mbox{lat}_j)$,
$d_{GC}(\bm{s}_i,\bm{s}_j) = 6371\theta_{ij},$ is the great circle distance.
Here $\theta_{ij}=\arccos\{ \sin a_i \sin a_j  + \cos a_i  \cos a_j  \cos(b_i - b_j)\}$
is the  great circle distance on the unit sphere with  $a_i=(\mbox{lat}_i)\pi/180$, $a_j=(\mbox{lat}_j)\pi/180$, $b_i=(\mbox{lon}_i)\pi/180$, $b_j=(\mbox{lon}_j)\pi/180$.




For the $t$ process the parameters
were estimated using $wpl$ using the bivariate $t$ distribution with the two-step method described in Section \ref{sec:4} and using the  weight function
(\ref{Wei}) with $d_{ij}=150$ Km.
It turns out that the estimation at the  first step  leads to fix $\nu=4$ in the second step, irrespective of the correlation model.
We also consider a Gaussian misspecified standard  likelihood  and $wpl$  estimation as described in Section \ref{pol}
i.e. we estimate using the Gaussian process (\ref{kkkkk29}) with the  $t$ correlation model \ref{CC}  fixing
 $\nu=4$.

In addition,  we compute the standard error estimation, $\mbox{PLIC}$ and $\mbox{BLIC}$ values
through parametric bootstrap estimation of the inverse of the Godambe information matrix \citep{Bai:Kang:Song:2014}.
For  standard maximum  likelihood
we compute the standard errors as the square root of diagonal elements of the inverse of Fisher Information matrix \citep{Mardia:Marshall:1984}.
The results are summarized in Table  \ref{tab:est}. Note that the  regression parameters estimates are quite similar
for the $t$ and Gaussian  processes, irrespective of the  correlation model.
Furthermore,   we note that the standard Gaussian  process assigns lower spatial dependence  and stronger variance
compared to the other cases.
Finally,  for each correlation model,  both  the (pairwise) likelihood information criterion $\mbox{PLIC}$ and $\mbox{BLIC}$
select for the pairwise case the $t$ model  and for the standard case  the Gaussian model with $t$ correlation function.
\begin{table}[htbp]
\begin{center}
\scalebox{0.65}{
\begin{tabular}{|c|c|c|c|c|c|c|c|c|c|c|}
  \hline

\multicolumn{11}{|c|}{Mat\'ern}  \\
  \hline
&Method& $\hat{\beta}_0$ & $\hat{\beta}_1$&$\hat{\alpha}$&$\hat{\sigma}^2$&PLIC&BLIC& RMSE&MAE&CRPS\\
\hline
\multirow{2}{*}{$t$}&\multirow{2}{*}{Pairwise}&$6.652$&$0.994 $&$58.582$&$7.589 $&$23906$&$24689$&$2.775$&$2.138$&$1.807$\\
&&$( 1.192 )$&$(0.107)$&$(13.492)$&$(1.451)$&&&&&\\
\hline
\multirow{2}{*}{Missp-Gaussian}&\multirow{2}{*}{Pairwise}&$5.656$&$1.064$&$72.435$&$7.062$&$24680$&$26172$&$2.743$&$2.112$&$1.809$\\
&&$( 1.514)$&$(0.130)$&$(18.303)$&$( 2.024 )$&&&&&\\
\hline
\multirow{2}{*}{Missp-Gaussian}&\multirow{2}{*}{Standard}&$ 5.673 $&$ 1.050$&$ 63.437 $&$ 5.385 $& $2228$ &$2244 $&$2.755$&$ 2.119$& $1.813$\\
&&$( 0.660)$&$(0.045)$&$(10.41)$&$(0.432)$&&&&&\\
\hline
\multirow{2}{*}{Gaussian}&\multirow{2}{*}{Standard}&$  5.508$&$1.060 $&$ 40.484 $&$10.762    $& $2240$&$2257$&$2.812$&$ 2.155$& $1.815$\\
&&$(0.558)$&$(0.039)$&$(5.346)$&$(0.813)$&&&&&\\
\hline
\multicolumn{11}{|c|}{Wendland}  \\
  \hline
&Method& $\hat{\beta}_0$ & $\hat{\beta}_1$&$\hat{\alpha}$&$\hat{\sigma}^2$&PLIC&BLIC& RMSE&MAE&CRPS\\
\hline
\multirow{2}{*}{$t$}&\multirow{2}{*}{Pairwise}&$6.6375$&$0.995$&$331.70$&$7.622 $&$ 23890  $&$24624 $&$2.796$&$2.155$&$1.807$\\
&&$(1.134)$&$(0.103)$&$(69.03)$&$(1.482)$&&&&&\\
\hline
\multirow{2}{*}{Missp-Gaussian}&\multirow{2}{*}{Pairwise}&$5.647$&$1.065$&$404.81$&$7.080$&$24657 $&$26100 $&$2.760$&$2.127$&$1.810$\\
&&$(1.446)$&$(0.124)$&$(92.99)$&$( 2.032 )$&&&&&\\
\hline
\multirow{2}{*}{Missp-Gaussian}&\multirow{2}{*}{Standard}&$ 5.579 $&$ 1.056$&$ 349.90 $&$ 5.485 $& $2232 $ &$2249$&$2.774$&$ 2.134$& $1.815$\\
&&$(0.618)$&$(0.042  )$&$(52.030)$&$(0.434)$&&&&&\\
\hline
\multirow{2}{*}{Gaussian}&\multirow{2}{*}{Standard}&$  5.400$&$1.066 $&$ 225.89 $&$11.081   $& $2250$&$2266$&$2.846$&$2.177$& $1.820$\\
&&$(0.530)$&$(0.038)$&$(27.72)$&$(0.835)$&&&&&\\

\hline
\end{tabular}
}
\end{center}
\caption{ Estimates for the $t$ process using $wpl$ with bivariate $t$ distribution and misspecified (full and weighted pairwise) Gaussian likelihood
   and for the Gaussian process using standard likelihood
  with associated standard  error (in parenthesis) and PLIC and BLIC values,
when estimating the Australian maximum temperature dataset using two correlation models: ${\cal  M}_{\alpha,0.5}$ and ${\cal  GW}_{\alpha,0,5}$.
Last three columns:  associated empirical mean of RMSEs, MAEs  and CRPSs.} \label{tab:est}
\end{table}

Given the estimation of the mean regression  and variance parameters of the $t$ process,
 the estimated residuals
\begin{equation*}
\hat{ {Y}}^*_{4}(\bm{s}_i)=\frac{y(\bm{s}_i)-(\hat{\beta}_0+\hat{\beta}_1X(\bm{s}_i))}{(\hat{\sigma}^2)^{\frac{1}{2}}} \quad i=1,\ldots N
\end{equation*}
can be viewed as a realization of  the process $Y^*_{4}$.
Similarly we can compute the Gaussian residuals.
For the $t$ process we use the $wpl$ estimates  obtained with the bivariate $t$ distribution.
Both residuals  can be useful in order
to check  the  model assumptions, in particular the marginal and dependence assumptions.
In the top part of Figure \ref{fig:resa} a $qq$-plot  of the residuals of the Gaussian  and $t$ processes (from left to right) is depicted for the Mat{\'e}rn  case.
It can be appreciated that the $t$ model overall  fits better with respect the Gaussian model even if  it seems to  fail  to model properly the  right   tail behavior.
Moreover, the  graphical comparison between the  empirical and fitted semivariogram
of the residuals (bottom part
of Figure \ref{fig:resa}) highlights an apparent  better fitting  of the $t$ model.

\begin{figure}[h!]
\begin{tabular}{cc}
\includegraphics[width=7.1cm, height=6.4cm]{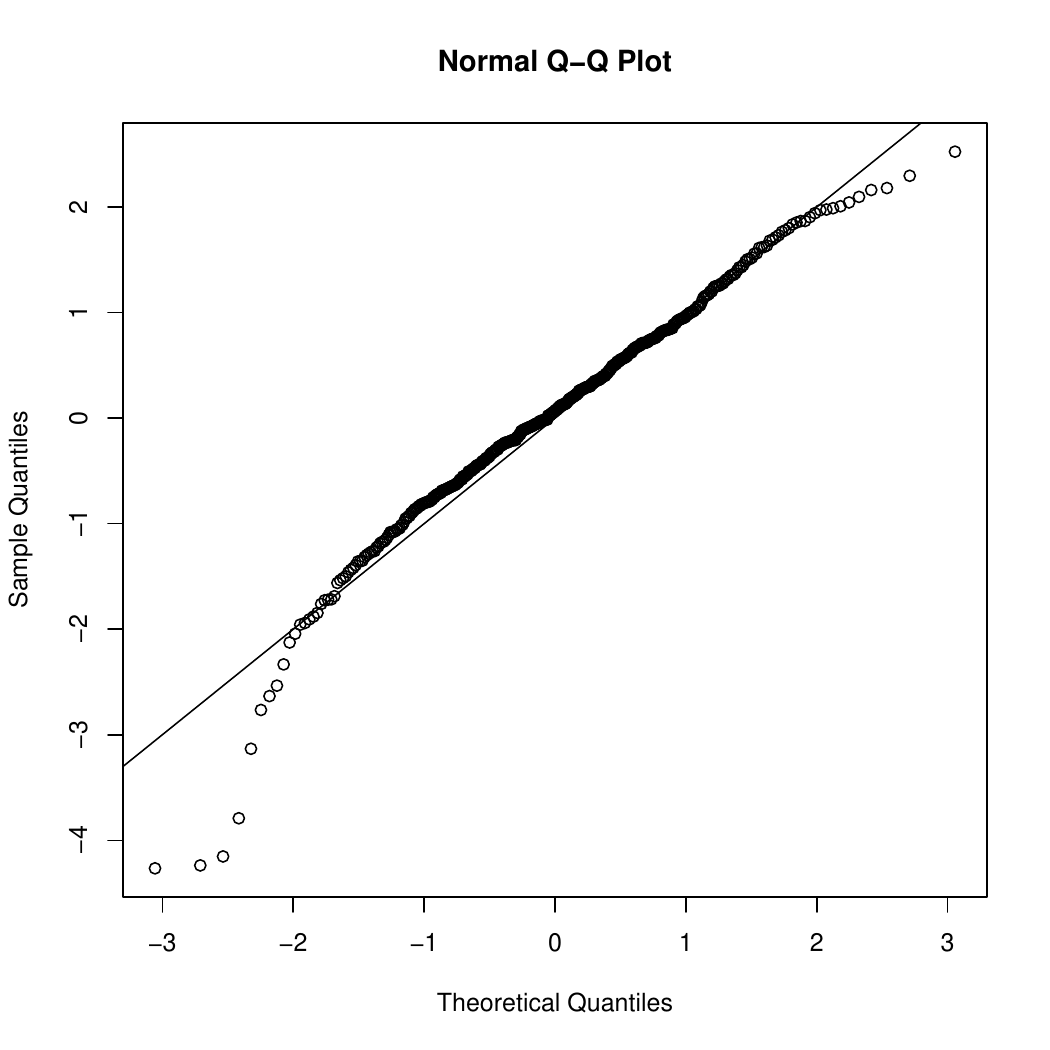} & \includegraphics[width=7.1cm, height=6.4cm]{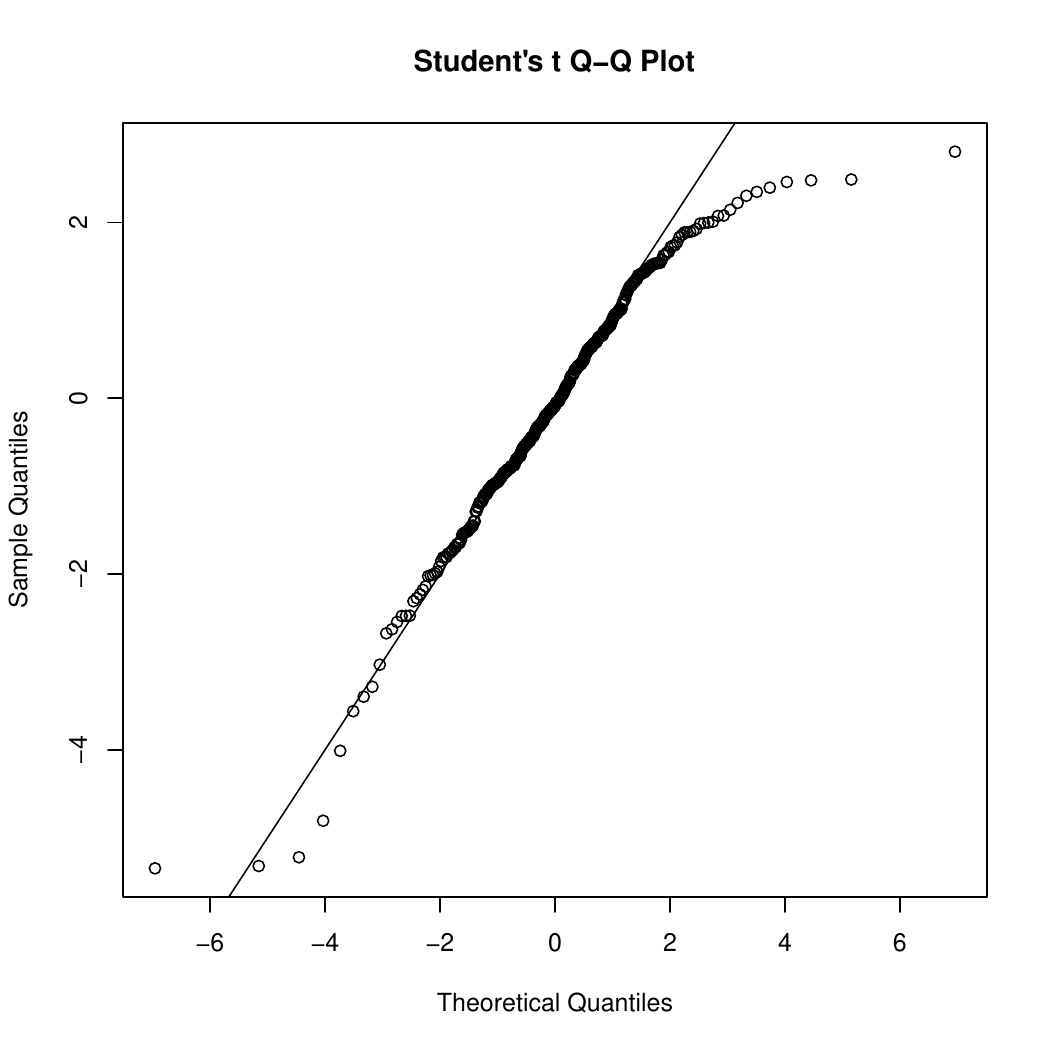}  \\
(a)&(b)\\
\includegraphics[width=7.1cm, height=6.4cm]{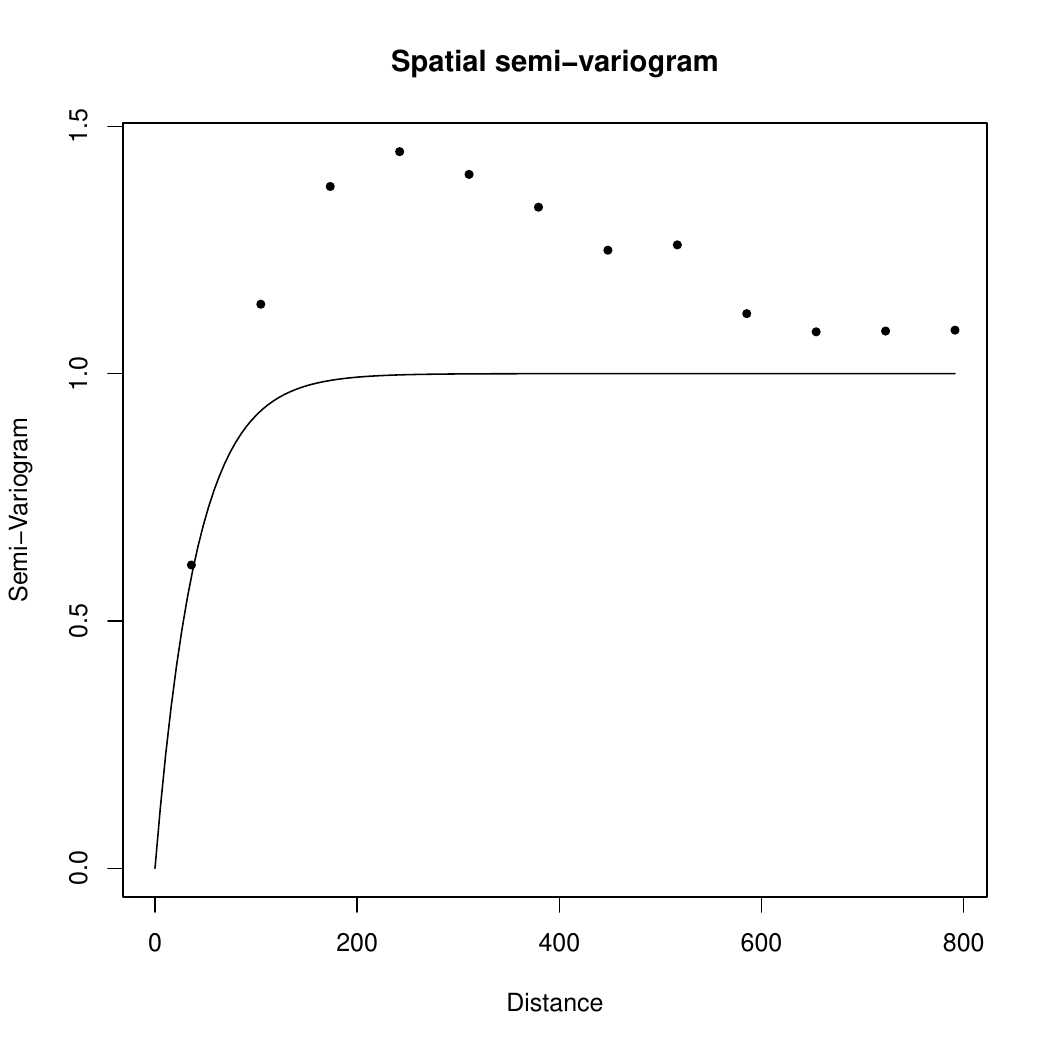} & \includegraphics[width=7.1cm, height=6.4cm]{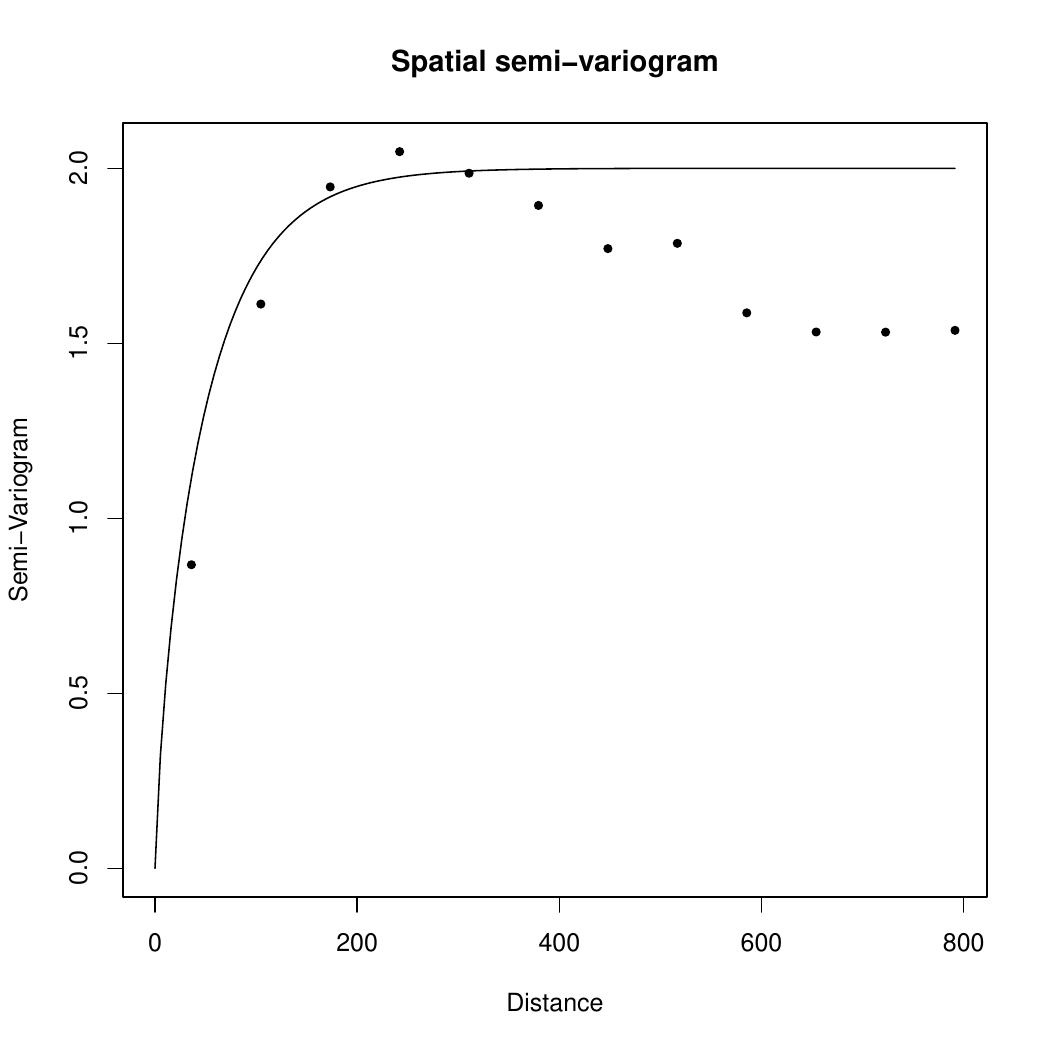}  \\
(c)&(d)\\
\end{tabular}
\caption{
Upper part: Q-Q plot of the residuals versus the estimated quantiles in the Gaussian and $t$ models ((a) and (b) respectively).
Bottom part: Empirical semi-variogram (dotted points) of the residuals versus the  estimated  semivariogram
(solid line) in the Gaussian and $t$ models ((c) and (d) respectively). Distances are expressed in Km.} \label{fig:resa}
\end{figure}

We want to further evaluate the predictive performances of Gaussian and $t$ processes using
RMSE and MAE as in Section \ref{piop}. Specifically, we use the following resampling approach: we randomly choose 80\%
of the data to predict and we use the estimates  previously obtained  in order to compute RMSE and MAE
 values at the remaining 20\% of the spatial locations.
  We repeat  the approach for 2000 times and
record all RMSEs and MAEs.
Specifically, for each $j-th$ left-out  sample   $(y_j^L(\bm{s}_1), \ldots,\ldots, y_j^L(\bm{s}_K))$, we compute

\begin{equation*}
\overline{\mathrm{RMSE}}_j=\left[ \frac{1}{K} \sum_{i= 1}^K\left( y_j^L(\bm{s}_i)-\widehat{Y}_j^L(\bm{s}_i)\right)^2\right]^{\frac{1}{2}}
\end{equation*}
and
\begin{equation*}
\overline{\mathrm{MAE}}_j= \frac{1}{K} \sum_{i= 1}^K| y_j^L(\bm{s}_i)-\widehat{Y}_j^L(\bm{s}_i)|,
\end{equation*}
where  $\widehat{Y}_j^L(\bm{s}_i)$ is the optimal or the  best linear optimal prediction for the Gaussian and $t$ processes respectively.
 Finally, we compute  the overall mean for both  Gaussian and $t$ processes and for both correlation models,  that is $\mathrm{RMSE}=\sum_{j=1}^{2000}\overline{\mathrm{RMSE}}_j/2000$ and
 $\mathrm{MAE}=\sum_{j=1}^{2000}\overline{\mathrm{MAE}}_j/2000$.

Additionally, to evaluate the marginal  predictive distribution performance, we also consider, for each sample, the continuous ranked
probability score (CRPS)
\citep{Gneiting:Raftery:2007}. For a single predictive cumulative distribution function $F$ and a verifying observation $y$, it is defined as:
\begin{equation*}
\mathrm{CRPS}(F,y)=\int\limits_{-\infty}^{\infty}(F(t)-\bm{1}_{[y,\infty]}(t))^2dt.
\end{equation*}
Specifically, for each $j-th$ left-out  sample, we consider the averaged
$\mathrm{CRPS}$ for  the Gaussian and $t$ distributions as:
\begin{equation}\label{poi}
\overline{\mathrm{CRPS}}_j= \frac{1}{K} \sum_{i= 1}^K \mathrm{CRPS}(F,y_j^L(\bm{s}_i))\quad F=F_G, F_{Y_4},
\end{equation}
for $j=1,\ldots,2000$.
 In particular in the Gaussian case
 \begin{footnotesize}
\begin{equation}\label{poli}
CRPS(F_G,y_j^L(\bm{s}_i))=\sigma \left(\frac{y_j^L(\bm{s}_i)-\mu(\bm{s})}{\sigma}\right)\left[2F_{G^*(\bm{s})}\left(\frac{y_j^L(\bm{s}_i)-\mu(\bm{s})}{\sigma}\right)-1\right]+2\sigma f_{G^*(\bm{s})}\left(\frac{y_j^L(\bm{s}_i)-\mu(\bm{s})}{\sigma}\right)-\frac{\sigma}{\sqrt{\pi}},
\end{equation}
\end{footnotesize}
and in the $t$ case with $4$ degrees of freedom:
\begin{footnotesize}
\begin{align}\label{ppo}
CRPS(F_{Y_4},y_j^L(\bm{s}_i))&=\sigma \left(\frac{y_j^L(\bm{s}_i)-\mu(\bm{s})}{\sigma}\right)\left[2F_{Y^*_{4}(\bm{s})}\left(\frac{y_j^L(\bm{s}_i)-\mu(\bm{s})}{\sigma}\right)-1\right]\\
&+
2\left[\frac{4\sigma^2+(y_j^L(\bm{s}_i)-\mu(\bm{s}))^2}{3\sigma}\right]f_{Y^*_4(\bm{s})}\left(\frac{y_j^L(\bm{s}_i)-\mu(\bm{s})}{\sigma}\right)\nonumber-\frac{4\sigma B\left(\frac{1}{2},\frac{7}{2}\right)}{3 B^2\left(\frac{1}{2},2\right)},
\end{align}
\end{footnotesize}
where $\mu(\bm{s})=\beta_0+\beta_1X(\bm{s})$.
We compute the CRPS in (\ref{poli}) and  (\ref{ppo})  plugging-in the estimates of the pairwise and standard (misspecified) likelihood estimation methods of $\beta_0$, $\beta_1$ and $\sigma$ using  the R package
\texttt{scoringRules} \citep{SR2019}.
 Finally, we compute  the overall mean for both  Gaussian and $t$ processes and for both correlation models,  that is $\mathrm{CRPS}=\sum_{j=1}^{2000}\overline{\mathrm{CRPS}}_j/2000$.

Table \ref{tab:est} reports the estimated RMSE, MAE  and CRPS.
As a general remark, the $t$ process outperforms the Gaussian process
for the three measures of prediction performance irrespective of the method of estimation and for  both correlation models. We point out  that RMSE and MAE are computed using the optimal  predictor in the Gaussian case
and the linear optimal in the $t$ case. However, the RMSE and MAE results highlight a better performance for the $t$ process.
In addition, a better  RMSE and MAE  for the Mat\'ern correlation model  with respect to the Wendland  is apparent, irrespective of the type of process.
The proposed $t$ process also leads to a clear better performance of the CRPS with respect to the Gaussian case.
In this specific example, the use of the  misspecified Gaussian $wpl$ estimates leads to the best results in terms of RMSE and  MAE.
On the other hand, the best CRPS results are achieved by using the $wpl$ estimates using the  proposed $t$ bivariate distribution

Finally, one important goal in spatial modeling of temperature data is
to create a high resolution  map in a spatial region using the observed data. In Figure
\ref{australia} (b), we plot  a high resolution map of the predicted residuals using the $t$ process
with underlying Mat\'ern correlation model estimated with $wpl$.

\section{Concluding remarks}\label{sec:6}

We have introduced a new  stochastic process with $t$ marginal distributions  for regression and dependence analysis when addressing
spatial 
with
 heavy tails.
Our proposal allows overcoming any problem of identifiability associated with previously proposed spatial models with $t$ marginals and, as a consequence, the  model parameters can be estimated with just one realization of the process.
Additionally, the  proposed $t$ process partially inherits some geometrical properties  of the `parent' Gaussian process,
an appealing feature from a data analysis point of view.
We have also proposed
a possible generalization,
obtaining a new process with the marginal distribution of the skew-$t$ type
using the skew-Gaussian process proposed in   \cite{Zhang:El-Shaarawi:2010}.



In our  proposal, a possible  limitation is  the lack of amenable expressions of the associated
 multivariate distributions. This prevents an inference approach based on the  full likelihood
 and the  computation of the optimal predictor.
 In the first case, our simulation study shows that  an inferential approach based on $wpl$,
using the bivariate $t$ distribution given in Theorem \ref{theo3}, could be an effective solution for estimating the unknown parameters.
An alternative less efficient solution   that requires  smaller computational burden can be  obtained by considering a misspecified Gaussian $wpl$
  using the  correlation function of the  $t$ process.
In the prediction case, our numerical experiments show that the optimal linear predictor  of the $t$ process
performs better than  the optimal Gaussian predictor when working with spatial data with heavy tails.

Another possible  drawback
concerns the restriction of the  degrees of freedom of the $t$ process to $\nu=3, 4,\ldots $.
under non-infinite divisibility of the  associated Gamma process. This problem could be solved by considering a Gamma process obtained by mixing  the proposed Gamma process
with a  process with beta marginals and using the results in \cite{YEO1991239}; however
the mathematics involved with this approach are much more challenging.

The estimation of the skew-$t$ process has not been addressed in this paper since the bivariate distribution in this case is quite complicated.
In principle, after a suitable parametrization, Gaussian misspecified $wpl$ can be performed  using   (\ref{CC99})
to estimate the parameters of the skew-$t$ process. In this case an additional issue is the
behavior of the information matrix when $\eta=0$ \citep{AR22}.
Finally, a $t$ process  with asymmetric marginal distribution can also   be obtained by considering
some  specific transformations  of the proposed standard $t$ process as in \cite{AV2011}
or under the two-piece distribution framework \citep{AV2015}
 and this will be studied in future work.

\section*{Acknowledgements}
Partial support was provided by FONDECYT grant 1160280, Chile
and  by Millennium
Science Initiative of the Ministry
of Economy, Development, and
Tourism, grant "Millenium
Nucleus Center for the
Discovery of Structures in
Complex Data"
for Moreno Bevilacqua and by Proyecto de Iniciaci\'on Interno
DIUBB 173408 2/I de la Universidad del B\'io-B\'io for Christian Caama\~no.

\newpage
\appendix

\section{Appendix} \label{sec:appendix}

\subsection{Proof  of Theorem \ref{theo0}}

\begin{proof}
 Set $R_{\nu}\equiv W^{-\frac{1}{2}}_{\nu}$.
Then the  correlation function of $Y^*_{\nu}$ is given by
\begin{equation}\label{covskewellip}
\rho_{Y^*_{\nu}}(\bm{h})=\left(\frac{\nu-2}{\nu}\right)
\left(\E(R_{\nu}(\bm{s}_i)R_{\nu}(\bm{s}_j))\rho(\bm{h})\right).
\end{equation}

To find a closed form for $\E(R_{\nu}(\bm{s}_i)R_{\nu}(\bm{s}_j)$, we need the bivariate distribution of $\bm{R}_{\nu;ij}$
that can be easily obtained from
density of the bivariate random vector $\bm{W}_{\nu;ij}$
 given by \citep{Bevilacqua:2018ab}:
\begin{footnotesize}
\begin{equation}\label{pairchi2}
f_{\bm{W}_{\nu;ij}}(w_{i},w_j)=\frac{2^{-\nu}\nu^{\nu}(w_iw_j)^{\nu/2-1}e^{-\frac{\nu(w_i+w_j)}{2(1-\rho^2(\bm{h}))}}}{\Gamma\left(\frac{\nu}{2}\right)(1-\rho^2(\bm{h}))^{\nu/2}}
\left(\frac{\nu\sqrt{\rho^2(\bm{h})w_iw_j}}{2(1-\rho^2(\bm{h}))}\right)^{1-\nu/2}I_{\nu/2-1}
\left( \frac{ \nu\sqrt{\rho^2(\bm{h})w_iw_j} } {(1-\rho^2(\bm{h}))}\right)
\end{equation}
\end{footnotesize}
where $I_{\alpha}(\cdot)$ denotes the modified Bessel function of the first kind of order $\alpha$.
\cite{VJ:1967} show the infinite divisibility of $\bm{W}_{\nu;ij}$.

Then, for each $\nu>2$, the bivariate distribution of $\bm{R}_{\nu;ij}$ is given
by:
\begin{footnotesize}
\begin{equation}\label{pairsquartgaminv2}
f_{\bm{R}_{\nu;ij}}(\bm{r}_{ij})=\frac{2^{-\nu+2}\nu^{\nu}(r_ir_j)^{-\nu-1}e^{-\frac{\nu}{2(1-\rho^2(\bm{h}))}\left(\frac{1}{r^2_i}+\frac{1}{r^2_j}\right)}}{\Gamma\left(\frac{\nu}{2}\right)(1-\rho^2(\bm{h}))^{\nu/2}}
\left(\frac{\nu\rho(\bm{h})}{2(1-\rho^2(\bm{h}))r_ir_j}\right)^{1-\frac{\nu}{2}}I_{\frac{\nu}{2}-1}\left(\frac{\nu\rho(\bm{h})}{(1-\rho^2(\bm{h}))r_ir_j}\right)
\end{equation}
\end{footnotesize}
Using the  identity
${}_0F_1(;b;x)=\Gamma(b)x^{(1-b)/2}I_{b-1}(2\sqrt{x})$
 and the  series expansion of hypergeometric function ${}_0F_1$ in (\ref{pairsquartgaminv2}) we have
\begin{footnotesize}
\begin{align}\label{cal}
\E(R^a(\bm{s}_i)R^b(\bm{s}_j))&=
\frac{2^{-\nu+2}\nu^\nu}{\Gamma^2\left(\frac{\nu}{2}\right)(1-\rho^2(\bm{h}))^{\nu/2}}
\int\limits_{\mathbb{R}_+^2}r_i^{-\nu+a-1}r_j^{-\nu+b-1}e^{-\frac{\nu}{2(1-\rho^2(\bm{h}))r_i^2}}e^{-\frac{\nu}{2(1-\rho^2(\bm{h}))r_j^2}}\nonumber\\
&\times{}_0F_1\left(\frac{\nu}{2};\frac{\nu^2\rho^2(\bm{h})}{4(1-\rho^2(\bm{h}))^2r^2_ir^2_j}\right)d\bm{r}_{ij}\nonumber\\
&=\frac{2^{-\nu+2}\nu^\nu}{\Gamma^2\left(\frac{\nu}{2}\right)(1-\rho^2(\bm{h}))^{\nu/2}}
\sum\limits_{k=0}^{\infty}\int\limits_{\mathbb{R}_+^2}r_i^{-\nu+a-2k-1}r_j^{-\nu+b-2k-1}e^{-\frac{\nu}{2(1-\rho^2(\bm{h}))r_i^2}}e^{-\frac{\nu}{2(1-\rho^2(\bm{h}))r_j^2}}\nonumber\\
&\times\frac{1}{k!\left(\frac{\nu}{2}\right)_k}
\left(\frac{\rho^2(\bm{h})\nu^2}{4(1-\rho^2(\bm{h}))}\right)^kd\bm{r}_{ij}\nonumber\\
&=\frac{2^{-\nu+2}\nu^\nu}{\Gamma^2\left(\frac{\nu}{2}\right)(1-\rho^2(\bm{h}))^{\nu/2}}
\sum\limits_{k=0}^{\infty}\frac{I(k)}{k!\left(\frac{\nu}{2}\right)_k}\left(\frac{\rho^2(\bm{h})\nu^2}{4(1-\rho^2(\bm{h}))}\right)^k
\end{align}
\end{footnotesize}
where, using Fubini's Theorem
\begin{footnotesize}
\begin{equation*}
I(k)=\int\limits_{\mathbb{R}_+}r_i^{-\nu+a-2k-1}e^{-\frac{\nu}{2(1-\rho^2(\bm{h}))r_i^2}}dr_i
\int\limits_{\mathbb{R}_+}r_j^{\nu+b-2k-1}e^{-\frac{\nu}{2(1-\rho^2(\bm{h}))r_j^2}}dr_j
\end{equation*}
\end{footnotesize}
Using the univariate density
$f_{R_{\nu}(\bm{s})}(r)=2\left(\frac{\nu}{2}\right)^{\nu/2}r^{-\nu-1}e^{-\frac{\nu}{2r^2}}/\Gamma\left(\frac{\nu}{2}\right)$,
we obtain
\begin{footnotesize}
\begin{align}\label{res1}
I(k)&=\Gamma\left(\frac{\nu-a}{2}+k\right)\Gamma\left(\frac{\nu-b}{2}+k\right)2^{\frac{\nu-a}{2}+k-1}2^{\frac{\nu-b}{2}+k-1}\left(\frac{(1-\rho^2(\bm{h}))}{\nu}\right)^{\frac{\nu-a}{2}+k}
\left(\frac{(1-\rho^2(\bm{h}))}{\nu}\right)^{\frac{\nu-b}{2}+k}
\end{align}
\end{footnotesize}
and combining equations (\ref{res1}) and (\ref{cal}), we obtain
\begin{footnotesize}
\begin{align*}
\E(R^a(\bm{s}_i)R^b(\bm{s}_j))&=
\frac{2^{-(a+b)/2}\nu^{(a+b)/2}(1-\rho^2(\bm{h}))^{(\nu-a-b)/2}\Gamma\left(\frac{\nu-a}{2}\right)\Gamma\left(\frac{\nu-b}{2}\right)}{\Gamma^2\left(\frac{\nu}{2}\right)}
\sum\limits^{\infty}_{k=0}\frac{\left(\frac{\nu-a}{2}\right)_k\left(\frac{\nu-b}{2}\right)_k}{k!\left(\frac{\nu}{2}\right)_k}\rho^{2k}(\bm{h})\\
&=\frac{2^{-(a+b)/2}\nu^{(a+b)/2}(1-\rho^2(\bm{h}))^{(\nu-a-b)/2}\Gamma\left(\frac{\nu-a}{2}\right)\Gamma\left(\frac{\nu-b}{2}\right)}{\Gamma^2\left(\frac{\nu}{2}\right)}
{}_2F_1\left(\frac{\nu-a}{2},\frac{\nu-b}{2};\frac{\nu}{2};\rho^2(\bm{h})\right)
\end{align*}
\end{footnotesize}
Then, using the Euler transformation, we obtain
\begin{footnotesize}
\begin{align}\label{esquartgaminv2}
\E(R_{\nu}^a(\bm{s}_i)R_{\nu}^b(\bm{s}_j))&=\frac{2^{-(a+b)/2}\nu^{(a+b)/2}}{\Gamma^2\left(\frac{\nu}{2}\right)}
\Gamma\left(\frac{\nu-a}{2}\right)\Gamma\left(\frac{\nu-b}{2}\right){}_2F_1\left(\frac{a}{2},\frac{b}{2};\frac{\nu}{2};\rho^2(\bm{h})\right)
\end{align}
\end{footnotesize}
for $\nu>a$ and $\nu>b$. Finally, setting $a=b=1$ in
(\ref{esquartgaminv2}) and using it in (\ref{covskewellip}) we
obtain (\ref{CC}).
\end{proof}

\subsection{Proof  of Theorem \ref{theoiii}}

\begin{proof}
If  $G$ is a  weakly stationary Gaussian process with correlation $\rho(\bm{h})$ then from (\ref{CC}) it is straightforward to see that
$Y^*_{\nu}$ is also weakly stationary.
Points b) and c) can be shown using the relations between the geometrical properties of a stationary process and the associated correlation.
Specifically, the mean-square continuity and
the $m$-times mean-square differentiability of $Y^*_{\nu}$  are equivalent to the continuity and $2m$-times
differentiability of $\rho_{Y^*_{\nu}}(\bm{h})$ at $\bm{h}=\bm{0}$, respectively \citep{Stein:1999}.
Recall that the correlation function of $Y^*_{\nu}$  is given
by:

\begin{equation}\label{dai2}
\rho_{Y^*_{\nu}}(\bm{h})=a(\nu)    \left[{}_2F_1\left(\frac{1}{2},\frac{1}{2};\frac{\nu}{2};\rho^2(\bm{h})\right)\rho(\bm{h})\right].
\end{equation}
with $a(\nu)=\frac{(\nu-2)\Gamma^2\left(\frac{\nu-1}{2}\right)}{2\Gamma^2\left(\frac{\nu}{2}\right)}$.
Using (\ref{JJ}) it can be easily seen  that  $\rho(\bm{\bm{0}})=1$ if and only if  $\rho_{Y^*_{\nu}}(\bm{0})=1$. Then $Y^*_{\nu}$
is mean-square continuous if and only if $G$ is mean-square continuous.



For  the  mean square differentiability, let  $G$ $m-$times mean square differentiable.   Using  iteratively  the $n-$th derivative of the ${}_2F_1$ function
with respect to $x$:
\begin{equation}\label{dai}
{}_2F^{(n)}_1(a,b,c,x)=\frac{(a)_n(b)_n}{(c)_n} {}_2F_1(a+n,b+n,c+n,x),\qquad n=1,2,\ldots
\end{equation}
and  applying  the convergence  condition $c>b+a$ of identity   (\ref{JJ}),  it can be shown   that
  $\rho^{(2m)}_{Y^*_{\nu}}(\bm{h})|_{\bm{h}=\bm{0}}< \infty $ if $\nu>2(2m+1)$
 and, as a consequence,
$Y^*_{\nu}$ is $m-$times  mean square differentiable under this condition.
On the other hand if $\nu\leq 2(2m+1)$ then  $Y^*_{\nu}$ is $(m-k)$-times mean-square differentiable
if
  $2(2(m-k)+1)< \nu \leq  2(2(m-k)+3)$,  for $k=1, \ldots, m$.

For instance, let assume that $G$ is   $1-$times mean square differentiable.  This implies that $\rho^{(i)}(\bm{h})|_{\bm{h}=\bm{0}}< \infty $, $i=1,2$.
Applying (\ref{dai}) to (\ref{dai2}), the  second derivative of $\rho_{Y^*_{\nu}}(\bm{h})$  is given by:
\begin{footnotesize}
\begin{align*}
\rho^{(2)}_{Y^{*}_{\nu}}(\bm{h})&=\frac{a(\nu)}{\nu(\nu+2)}\bigg[(6+3\nu){}_2F_1\left(1.5,1.5;1+\frac{\nu}{2};\rho^{2}(\bm{h})\right)\rho(\bm{h})\{\rho^{(1)}(\bm{h})\}^{2}\\
&+9{}_2F_1\left(2.5,2.5;2+\frac{\nu}{2};\rho^{2}(\bm{h})\right)\rho^3(\bm{h})\{\rho^{(1)}(\bm{h})\}^{2}+\nu(2+\nu){}_2F_1\left(0.5,0.5;\frac{\nu}{2};\rho^{2}(\bm{h})\right)\rho^{(2)}(\bm{h})\\
&+(2+\nu){}_2F_1\left(1.5,1.5;1+\frac{\nu}{2};\rho^{2}(\bm{h})\right)\rho^2(\bm{h})\rho^{(2)}(\bm{h})\bigg]
\end{align*}
\end{footnotesize}
Then, applying  the convergence  condition of identity   (\ref{JJ}),  $\rho^{(2)}_{Y^{*}_{\nu}}(\bm{h})|_{\bm{h}=\bm{0}}< \infty $   if $2+\nu/2>5$ that is  $\nu>6$.  Therefore $Y^*_{\nu}$ is $1-$times  mean square differentiable
if $\nu>6$ and $0-$times  mean square differentiable if $2<\nu\leq 6$.

Point d) can be  shown recalling that a process $F$ is  long-range dependent
if the  correlation of $F$ is such that  $\int_{\R^n_{+}} |\rho_F(\bm{h})|d^n \bm{h}=\infty$  \citep{LiTe09}.
Direct inspection, using series expansion of the hypergeometric function,
shows that
$\int_{\R^n_{+}} |\rho_{Y^*_{\nu}}(\bm{h})|d^n \bm{h}=\infty$ if and only if
$\int_{\R^n_{+}} |\rho(\bm{h})|d^n \bm{h}=\infty$
 and, as a consequence, $Y^*_{\nu}$ has long-range dependence if and only if $G$  has long-range dependence.

Finally, 
note that 
if  $\nu>2$ then $a(\nu){}_2F_1\left(\frac{1}{2},\frac{1}{2};\frac{\nu}{2};0\right)=a(\nu)$, $a(\nu){}_2F_1\left(\frac{1}{2},\frac{1}{2};\frac{\nu}{2};1\right)=1$
and $a(\nu) {}_2F_1\left(\frac{1}{2},\frac{1}{2};\frac{\nu}{2};x^2\right)$ is not decreasing  in  $0\leq x \leq1$.
This implies
$a(\nu) {}_2F_1\left(\frac{1}{2},\frac{1}{2};\frac{\nu}{2};x^2\right) \leq 1$ that is
$\rho_{Y^*_{\nu}}(\bm{h})\leq \rho(\bm{h})$. Moreover, $\lim\limits_{\nu \to \infty }a(\nu)=1$ and using   series expansion of the hypergeometric function:
\begin{footnotesize}
\begin{align*}
\lim\limits_{\nu\rightarrow\infty}{}_2F_1\left(\frac{1}{2},\frac{1}{2};\frac{\nu}{2};\rho(\bm{h})^2\right)
   &=\lim\limits_{\nu\rightarrow\infty}\bigg[1+\frac{\left(\frac{1}{2}\right)^2_1 \rho(\bm{h})^2}{1\left(\frac{\nu}{2}\right)_1}+\frac{\left(\frac{1}{2}\right)^2_2 \rho(\bm{h})^4}{1\left(\frac{\nu}{2}\right)_2}
   +\ldots+\frac{\left(\frac{1}{2}\right)^2_k \rho(\bm{h})^{2k}}{1\left(\frac{\nu}{2}\right)_k}+\ldots\bigg]\nonumber\\
   &=\lim\limits_{\nu\rightarrow\infty}\bigg[1+\frac{2\rho(\bm{h})^2}{\nu}+\frac{\left(\frac{1}{2}\right)^2\left(\frac{1}{2}+1\right)^2\rho(\bm{h})^2}{2!\left(\frac{\nu}{2}\right)\left(\frac{\nu}{2}+1\right)}\nonumber\\
   &+\ldots+\frac{\left(\frac{1}{2}\right)^2\left(\frac{1}{2}+1\right)^2\cdots\left(\frac{1}{2}+k-1\right)^2\rho(\bm{h})^{2k}}{k!\left(\frac{\nu}{2}\right)\left(\frac{\nu}{2}+1\right)\cdots\left(\frac{\nu}{2}+k-1\right)}+\ldots\bigg]\nonumber\\
   &=1.\nonumber
\end{align*}
\end{footnotesize}
This implies
$\lim\limits_{\nu \to \infty } \rho_{Y^*_{\nu}}(\bm{h})=\rho(\bm{h})$.
\end{proof}

\subsection{Proof  of Theorem \ref{theo3}}

\begin{proof}
 Using  the  identity
${}_0F_1(;b;x)=\Gamma(b)x^{(1-b)/2}I_{b-1}(2\sqrt{x})$
 and the  series expansion of hypergeometric function ${}_0F_1$, then
under the transformation $g_i=y_i\sqrt{w_i}$ and
$g_j=y_j\sqrt{w_j}$ with Jacobian $J((g_i,g_j)\to
(y_i,y_j))=(w_iw_j)^{1/2}$, we have:
\begin{footnotesize}
\begin{align}\label{degam}
f_{\bm{Y}^*_{ij}}(\bm{y}_{ij})&=\int\limits_{\mathbb{R}_+^2}f_{\bm{G}_{ij}|\bm{W}_{ij}}(\bm{g}_{ij}|\bm{w}_{ij})f_{\bm{W}_{ij}}(\bm{w}_{ij})Jd\bm{w}_{ij}\nonumber\\
&=\frac{2^{-\nu}\nu^{\nu}}{2\pi\Gamma^2\left(\frac{\nu}{2}\right)(1-\rho^2(\bm{h}))^{(\nu+1)/2}}\int\limits_{\mathbb{R}^2_+}(w_iw_j)^{(\nu+1)/2-1}
e^{-\frac{1}{2(1-\rho^2(\bm{h}))}\left[w_iy_i^2+w_jy_j^2-2\rho(\bm{h})\sqrt{w_iw_j}y_iy_j\right]}\nonumber\\
&\times e^{-\frac{\nu(w_i+w_j)}{2(1-\rho^2(\bm{h}))}}{}_0F_1\left(\frac{\nu}{2};\frac{\nu^2\rho^2(\bm{h})w_iw_j}{4(1-\rho^2(\bm{h}))^2}\right)d\bm{w}_{ij}\nonumber\\
&=\frac{2^{-\nu}\nu^{\nu}}{2\pi\Gamma^2\left(\frac{\nu}{2}\right)(1-\rho^2(\bm{h}))^{(\nu+1)/2}}\int\limits_{\mathbb{R}^2_+}(w_iw_j)^{(\nu+1)/2-1}
e^{-\frac{1}{2(1-\rho^2(\bm{h}))}\left[y_i^2-2\rho(\bm{h})\sqrt{\frac{w_j}{w_i}}y_iy_j+\nu\right]w_i}e^{-\frac{(y_j^2+\nu)w_j}{2(1-\rho^2(\bm{h}))}}\nonumber\\
&\times\sum\limits_{k=0}^{\infty}\frac{1}{k!\left(\frac{\nu}{2}\right)_k}\left(\frac{\nu^2\rho^2(\bm{h})w_iw_j}{4(1-\rho^2(\bm{h}))^2}\right)^kd\bm{w}_{ij}\nonumber\\
&=\frac{2^{-\nu}\nu^{\nu}}{2\pi\Gamma^2\left(\frac{\nu}{2}\right)(1-\rho^2(\bm{h}))^{(\nu+1)/2}}
\sum\limits_{k=0}^{\infty}\frac{I(k)}{k!\left(\frac{\nu}{2}\right)_k}\left(\frac{\nu^2\rho^2(\bm{h})}{4(1-\rho^2(\bm{h}))^2}\right)^k
\end{align}
\end{footnotesize}
using (3.462.1) of \cite{Gradshteyn:Ryzhik:2007}, we obtain
\begin{footnotesize}
\begin{align}\label{demt1}
I(k)&=\int\limits_{\mathbb{R}_+}w_j^{(\nu+1)/2+k-1}e^{-\frac{(y_j^2+\nu)w_j}{2(1-\rho^2(\bm{h}))}}\left[\int\limits_{\mathbb{R}_+}w_i^{(\nu+1)/2+k-1}e^{\left[-\frac{(y_i^2+\nu)}{2(1-\rho^2(\bm{h}))}w_i-\frac{\rho(\bm{h})\sqrt{w_j}y_iy_j}{(\rho^2(\bm{h})-1)}\sqrt{w_i}\right]}dw_i\right]dw_j\nonumber\\
&=2\left(\frac{y_i^2+\nu}{(1-\rho^2(\bm{h}))}\right)^{-\left(\frac{\nu+1}{2}+k\right)}\Gamma\left(\nu+1+2k\right)\int\limits_{\mathbb{R}_+}w_j^{(\nu+1)/2+k-1}
e^{\left[\frac{\rho^2(\bm{h})y_i^2y_j^2}{4(1-\rho^2(\bm{h}))(y_i^2+\nu)}-\frac{(y_j^2+\nu)}{2(1-\rho^2(\bm{h}))}\right]w_j}\nonumber\\
&\times D_{-(\nu+1+2k)}\left(-\frac{\rho(\bm{h})y_iy_j\sqrt{w_j}}{\sqrt{(1-\rho^2(\bm{h}))(y_i^2+\nu)}}\right)dw_j\nonumber\\
&=2\left(\frac{y_i^2+\nu}{(1-\rho^2(\bm{h}))}\right)^{-\left(\frac{\nu+1}{2}+k\right)}\Gamma\left(\nu+1+2k\right)A(k)
\end{align}
\end{footnotesize}
where $D_{n}(x)$ is the parabolic cylinder function.
Now, considering (9.240) of
\cite{Gradshteyn:Ryzhik:2007}:
\begin{footnotesize}
\begin{align}\label{cylinderp}
D_{-(\nu+1+2k)}\left(-\frac{\rho(\bm{h})y_iy_j\sqrt{w_j}}{\sqrt{(1-\rho^2(\bm{h}))(y_i^2+\nu)}}\right)&=b_1e^{-\frac{\rho^2(\bm{h})y^2_iy^2_jw_j}{4(1-\rho^2(\bm{h}))(y_i^2+\nu)}}\nonumber\\
&\times {}_1F_1\left(\frac{\nu+1}{2}+k;\frac{1}{2};\frac{\rho^2(\bm{h})y^2_iy^2_jw_j}{2(1-\rho^2(\bm{h}))(y_i^2+\nu)}\right)\nonumber\\
&+b_2\sqrt{w_j}e^{-\frac{\rho^2(\bm{h})y^2_iy^2_jw_j}{4(1-\rho^2(\bm{h}))(y_i^2+\nu)}}\nonumber\\
&\times {}_1F_1\left(\frac{\nu}{2}+k+1;\frac{3}{2};\frac{\rho^2(\bm{h})y^2_iy^2_jw_j}{2(1-\rho^2(\bm{h}))(y_i^2+\nu)}\right)
\end{align}
\end{footnotesize}
where $b_1=\frac{2^{-(\nu+1)/2+k}\sqrt{\pi}}{\Gamma\left(\frac{\nu}{2}+k+1\right)}$ and $b_2=\frac{2^{-\nu/2-k}\sqrt{\pi}\rho(\bm{h})y_iy_j}{\Gamma\left(\frac{\nu+1}{2}+k\right)\sqrt{(1-\rho^2(\bm{h}))(y_i^2+\nu)}}$. Replacing equations (\ref{cylinderp}) in
(\ref{demt1}) and using (7.621.4) of
\cite{Gradshteyn:Ryzhik:2007}, we obtain
\begin{footnotesize}
\begin{align}\label{demtff}
A(k)&=b_1\int\limits_{\mathbb{R}_+}w_j^{(\nu+1)/2+k-1}
e^{-\frac{(y^2_j+\nu)}{2(1-\rho^2(\bm{h}))}w_j}{}_1F_1\left(\frac{\nu+1}{2}+k;\frac{1}{2};\frac{\rho^2(\bm{h})y^2_iy^2_jw_j}{2(1-\rho^2(\bm{h}))(y_i^2+\nu)}\right)dw_j\nonumber\\
&+b_2\int\limits_{\mathbb{R}_+}w_j^{\nu/2+k+1-1}
e^{-\frac{(y^2_j+\nu)}{2(1-\rho^2(\bm{h}))}w_j}{}_1F_1\left(\frac{\nu}{2}+k+1;\frac{3}{2};\frac{\rho^2(\bm{h})y^2_iy^2_jw_j}{2(1-\rho^2(\bm{h}))(y_i^2+\nu)}\right)dw_j\nonumber\\
&=b_1\Gamma\left(\frac{\nu+1}{2}+k\right)\left(\frac{y^2_j+\nu}{2(1-\rho^2(\bm{h}))}\right)^{-\frac{(\nu+1)}{2}-k}{}_2F_1\left(\frac{\nu+1}{2}+k,\frac{\nu+1}{2}+k;\frac{1}{2};\frac{\rho^2(\bm{h})y^2_iy^2_j}{(y_i^2+\nu)(y_j^2+\nu)}\right)\nonumber\\
&+b_2\Gamma\left(\frac{\nu}{2}+k+1\right)\left(\frac{y^2_j+\nu}{2(1-\rho^2(\bm{h}))}\right)^{-\frac{\nu}{2}-k-1}{}_2F_1\left(\frac{\nu}{2}+k+1,\frac{\nu}{2}+k+1;\frac{3}{2};\frac{\rho^2(\bm{h})y^2_iy^2_j}{(y_i^2+\nu)(y_j^2+\nu)}\right)
\end{align}
\end{footnotesize}
finally, combining equations (\ref{demtff}), (\ref{demt1}) and
(\ref{degam}), we obtain
\begin{footnotesize}
\begin{align*}
f_{\bm{Y}^*_{ij}}(\bm{y}_{ij})&=\frac{\nu^{\nu}[(y_i^2+\nu)(y_j^2+\nu)]^{-(\nu+1)/2}\Gamma^2\left(\frac{\nu+1}{2}\right)}{\pi\Gamma^2\left(\frac{\nu}{2}\right)(1-\rho^2(\bm{h}))^{-(\nu+1)/2}}
\sum\limits_{k=0}^{\infty}\frac{\left(\frac{\nu+1}{2}\right)^2_k}{k!\left(\frac{\nu}{2}\right)_k}\left(\frac{\nu^2\rho^2(\bm{h})}{(y_i^2+\nu)(y_j^2+\nu)}\right)^k\nonumber\\
&\times{}_2F_1\left(\frac{\nu+1}{2}+k,\frac{\nu+1}{2}+k;\frac{1}{2};\frac{\rho^2(\bm{h})y_i^2y_j^2}{(y_i^2+\nu)(y_j^2+\nu)}\right)\nonumber\\
&+\frac{\rho(\bm{h})y_iy_j\nu^{\nu+2}[(y_i^2+\nu)(y_j^2+\nu)]^{-\nu/2-1}}{2\pi(1-\rho^2(\bm{h}))^{-(\nu+1)/2}}
\sum\limits_{k=0}^{\infty}\frac{\left(\frac{\nu}{2}+1\right)^2_k}{k!\left(\frac{\nu}{2}\right)_k}\left(\frac{\nu^2\rho^2(\bm{h})}{(y_i^2+\nu)(y_j^2+\nu)}\right)^k\nonumber\\
&\times{}_2F_1\left(\frac{\nu}{2}+k+1,\frac{\nu}{2}+k+1;\frac{3}{2};\frac{\rho^2(\bm{h})y_i^2y_j^2}{(y_i^2+\nu)(y_j^2+\nu)}\right)\nonumber\\
\end{align*}
\end{footnotesize}
and using (\ref{apell4})
we obtain theorem \ref{theo3}.
\end{proof}

\subsection{Proof of Theorem \ref{theo1}}

\begin{proof}
Consider $\bm{U}=(U(\bm{s}_1),\ldots,U(\bm{s}_n))^T$, $\bm{V}=
(|X_1(\bm{s}_1)|,\ldots,|X_1(\bm{s}_n)|)^T$, $\bm{Q}=
(X_2(\bm{s}_1),\ldots,X_2(\bm{s}_n))^T$  where
$\bm{X}_k=(X_k(\bm{s}_1),\ldots,X_k(\bm{s}_n))^T\sim
N_n(\bm{0},\Omega)$, for $k= 1,2$, which are assumed to be
independent. By definition of the skew-Gaussian process
in~(\ref{repskew}) we have:
\begin{equation*}
\bm{U}=\bm{\alpha}+\eta\bm{V}+\omega\bm{Q}
\end{equation*}
where, by assumption $\bm{V}$ and $\bm{Q}$ are independent. Thus,
by conditioning on $\bm{V}=\bm{v}$, we have
$\bm{U}|\bm{V}=\bm{v}\sim
N_n(\bm{\alpha}+\eta\bm{v},\omega^2\Omega)$, from which we obtain
\begin{equation*}\label{skk}
f_{\bm{U}}(\bm{u})=\int\limits_{\mathbb{R}^n}\phi_n(\bm{u};\bm{\alpha}+\eta\bm{v},\omega^2\Omega)f_{\bm{V}}(\bm{v})d\bm{v}
\end{equation*}

To solve this integral we need $f_{\bm{V}}(\bm{v})$, $i.e$., the
joint density of $\bm{V}=
(|X_1(\bm{s}_1)|,\ldots,|X_1(\bm{s}_n)|)^T$. Let
$\bm{X}_k=(X_1,\ldots,X_n)^T=(X_1(\bm{s}_1),\ldots,X_1(\bm{s}_n))^T$
and $\bm{V}= (|X_1|,\ldots,|X_n|)^T$. Additionally, consider the diagonal
matrices $\bm{D}(\bm{l})=\text{diag}\{l_1,\ldots,l_n\}$, with
$\bm{l}=(l_1,\ldots,l_n)\in\{-1,+1\}^n$, which are such that
$\bm{D}(\bm{l})^2$ is the identity matrix.
Since $\bm{l}\circ\bm{v}=\bm{D}(\bm{l})\bm{v}$ (the componentwise
product) and $\bm{X}\sim N_n(\bm{0},\Omega)$, we then have
\begin{align*}
F_{\bm{V}}(\bm{v})&=Pr(\bm{V}\leq\bm{v})=Pr(|\bm{X}|\leq\bm{v})=Pr(-\bm{v}\leq\bm{X}\leq \bm{v})\\
  &=\sum\limits_{\bm{l}\in\{-1,+1\}^n} (-1)^{N_{-}}\Phi_n(\bm{D}(\bm{l})\bm{v};\bm{0},\Omega),\;\;\;\;(N_{-}=\sum\limits_{i=1}^n I_{l_i=1}\text{det}\{\bm{D}(\bm{l})\}) \\
  &=\sum\limits_{\bm{l}\in\{-1,+1\}^n} \text{det}\{\bm{D}(\bm{l})\}\Phi_n(\bm{D}(\bm{l})\bm{v};\bm{0},\Omega)
\end{align*}
Hence, by using that
\begin{equation*}
\frac{\partial^n\Phi_n(\bm{D}(\bm{l})\bm{v};\bm{0},\Omega)}{\partial
v_1\cdots\partial
v_n}=\text{det}\{\bm{D}(\bm{l})\}\Phi_n(\bm{D}(\bm{l})\bm{v};\bm{0},\Omega)
\end{equation*}
we find that the joint density of $\bm{V}$ is
\begin{align*}
f_{\bm{V}}(\bm{v})&=\sum\limits_{\bm{l}\in\{-1,+1\}^n}[\text{det}\{\bm{D}(\bm{l})\}]^2\phi_n(\bm{D}(\bm{l})\bm{v};\bm{0},\Omega)   \\
  &=\sum\limits_{\bm{l}\in\{-1,+1\}^n}\phi_n(\bm{D}(\bm{l})\bm{v};\bm{0},\Omega),\;\;\;\;([\text{det}\{\bm{D}(\bm{l})\}]^2=1) \\
  &=\sum\limits_{\bm{l}\in\{-1,+1\}^n}
  |\text{det}\{\bm{D}(\bm{l})\}|\phi_n(\bm{v};\bm{0},\Omega_l),\;\;\;\;(\Omega_l=\bm{D}(\bm{l})\Omega\bm{D}(\bm{l})=(l_il_j\rho_{ij}))\\
  &=\sum\limits_{\bm{l}\in\{-1,+1\}^n}\phi_n(\bm{v};\bm{0},\Omega_l),\;\;\;\;(|\text{det}\{\bm{D}(\bm{l})\}|=1)\\
  &=2\sum\limits_{\bm{l}\in\{-1,+1\}^n:l\neq-l}\phi_n(\bm{v};\bm{0},\Omega_l)
\end{align*}
where the last identity is due to
$\Omega_{-l}=\bm{D}(-\bm{l})\Omega\bm{D}(-\bm{l})=\bm{D}(\bm{l})\Omega\bm{D}(\bm{l})=\Omega_{l}$
for all $\bm{l}\in\{-1,+1\}^n$; e.g. for $n=3$ the sum must be
performed on
\begin{equation*}
\bm{l}\in\{(+1,+1,+1),(+1,+1,-1),(+1,-1,+1),(-1,+1,+1)\}
\end{equation*}
since
\begin{equation*}
-\bm{l}\in\{(-1,-1,-1),(-1,-1,+1),(-1,+1,-1),(+1,-1,-1)\}
\end{equation*}
and both sets produce the same correlation matrices. The joint
density of $\bm{U}$ is thus given by
\begin{align*}
f_{\bm{U}}(\bm{u})&=2\sum\limits_{w\in\{-1,+1\}^n:w\neq-w}\int\limits_{\mathbb{R}^n_+}\phi_n(\bm{u};\bm{\alpha}+\eta\bm{v},\omega^2\Omega)\phi_n(\bm{v};\bm{0},\Omega_l)d\bm{v}\\
  &=2\sum\limits_{w\in\{-1,+1\}^n:w\neq-w}\phi_n(\bm{u};\bm{\alpha},\bm{A}_l)\int\limits_{\mathbb{R}^n_+}\phi_n(\bm{v};\bm{c}_l,\bm{B}_l)d\bm{v}\\
  &=2\sum\limits_{w\in\{-1,+1\}^n:w\neq-w}\phi_n(\bm{u};\bm{\alpha},\bm{A}_l)\Phi_n(\bm{c}_l;\bm{0},\bm{B}_l)
\end{align*}
where $\bm{A}_l=\omega^2\Omega+\eta^2\Omega_l$,
$\bm{c}_l=\eta\Omega_l\bm{A}_l^{-1}(\bm{u}-\bm{\alpha})$,
$\bm{B}_l=\Omega_l-\eta^2\Omega_l\bm{A}_l^{-1}\Omega_l$, and we
have used the identity
$\phi_n(\bm{u};\bm{\alpha}+\eta\bm{v},\omega^2\Omega)\phi_n(\bm{v};\bm{0},\Omega_l)=\phi_n(\bm{u};\bm{\alpha},\bm{A}_l)\phi_n(\bm{v};\bm{c}_l,\bm{B}_l)$
which follows straightforwardly from the standard
marginal-conditional factorizations of the underlying multivariate
normal joint density.
\end{proof}

\bibliographystyle{ECA_JASA}
\bibliography{mybib2}
\end{document}